\title{Rigidification of higher categorical structures}
\author{Giovanni Caviglia}
\address{Radboud Universiteit Nijmegen, Institute for Mathematics, Astrophysics, and Particle Physics\\ Heyendaalseweg 135\\ 6525 AJ Nijmegen\\ the Netherlands}
\email{g.caviglia@math.ru.nl}
\urladdr{http://www.math.ru.nl/~gcaviglia/}
\author{Geoffroy Horel}
\address{Max Planck Institute for Mathematics\\
Vivatsgasse 7\\
53111 Bonn\\
Deutschland}
\email{geoffroy.horel@gmail.com}
\urladdr{http://geoffroy.horel.org/}
\keywords{Internal operads, internal $n$-categories, limit sketches, model categories}
\newtheorem{theo}{Theorem}[section]
\newtheorem{lemm}[theo]{Lemma}
\newtheorem{prop}[theo]{Proposition}
\newtheorem{coro}[theo]{Corollary}
\newtheorem*{theo*}{Theorem}
\theoremstyle{definition}
\newtheorem{defi}[theo]{Definition}
\newtheorem{assu}[theo]{Assumption}
\newtheorem{example}[theo]{Example}
\newtheorem{rem}[theo]{Remark}
\newcommand{\op}{^{\mathrm{op}}}
\newcommand{\cat}{\mathbf}
\newcommand{\Set}{\cat{Set}}
\newcommand{\Cat}{\cat{Cat}}
\newcommand{\id}{\mathrm{id}}
\renewcommand{\R}{\mathbb{R}}
\renewcommand{\L}{\mathbb{L}}
\newcommand{\Map}{\operatorname{Map}}
\newcommand{\Fun}{\operatorname{Fun}}
\newcommand{\sSet}{\mathrm{\Set}^{\Delta\op}}
\renewcommand{\S}{\cat{S}}
\newcommand{\X}{\cat{X}}
\newcommand{\diagcs}[9]{\xymatrix@C=#9cm{#1 \ar[d]_{#5} \ar[r]^-{#6} &  #2 \ar[d]^-{#7}\\
          #3 \ar[r]_-{#8} & #4}}
\newcommand{\Fcat}[2]{\mathrm{Fun}(#1,#2)} 
\newcommand{\FcatR}[2]{\mathrm{Fun}^R(#1,#2)} 
\newcommand{\FcatRl}[3]{\mathrm{Fun}^{#1R}(#2,#3)}
\newcommand{\FcatRRl}[3]{\mathrm{Fun}^{#3R,#3R}(#1,#2)} 
\newcommand{\Indl}[1]{\mathsf{Ind}_{#1}} 
\newcommand{\coten}[2]{{#1}^{#2}}
\newcommand{\Xicl}{\widetilde{\Xi}}
\newcommand{\model}[1]{\mathbf{Mod}(#1)}
\newcommand{\hmodel}[1]{\mathbf{hMod}(#1)}
\newcommand{\lder}{\mathbb{L}}
\newcommand{\rder}{\mathbb{R}}
\newcommand{\emap}[1]{\mathrm{Hom}_{#1}}
\newcommand{\segal}{_{\text{Segal}}}
\newcommand{\Alm}[1]{\mathrm{Alg}(#1)}
\newcommand{\colim}{\mathop{\mathrm{colim}}}
\renewcommand{\lim}{\mathop{\mathrm{lim}}}
\newcommand{\hocolim}{\mathop{\mathrm{hocolim}}}
\newcommand{\holim}{\mathop{\mathrm{holim}}}
\numberwithin{equation}{section}
\begin{document}

\begin{abstract}
Given a limit sketch in which the cones have a finite connected base, we show that a model structure of ``up to homotopy'' models for this limit sketch in a suitable model category can be transferred to a Quillen equivalent model structure on the category of strict models. As a corollary of our general result, we obtain a rigidification theorem which asserts in particular that any $\Theta_n$-space in the sense of Rezk is levelwise equivalent to one that satisfies the Segal conditions on the nose. There are similar results for dendroidal spaces and $n$-fold Segal spaces.
\end{abstract}

\begin{asciiabstract}
Given a limit sketch in which the cones have a finite connected base, we show that a model structure of ``up to homotopy'' models for this limit sketch in a suitable model category can be transferred to a Quillen equivalent model structure on the category of strict models. As a corollary of our general result, we obtain a rigidification theorem which asserts in particular that any (Theta_n)-space in the sense of Rezk is levelwise equivalent to one that satisfies the Segal conditions on the nose. There are similar results for dendroidal spaces and n-fold Segal spaces. 
\end{asciiabstract}

\maketitle


\section{Introduction}

This paper is concerned with the problem of rigidification for higher categorical structures. Usually, the correct definition of a higher categorical object is the one where the relations are required to hold as weakly as possible. However, sometimes, these objects can be partially rigidified to equivalent objects for which some of the relations hold strictly. The simplest example of this phenomenon can be seen on monoidal categories. The standard definition of a monoidal category involves associators and unitors that insure that any two ways of tensoring a finite sequence of objects are uniquely isomorphic. On the other hand, it is known that any monoidal category is equivalent to a monoidal category in which the tensor product is unital and associative on the nose, meaning that all the possible ways of tensoring a finite sequence of objects are equal. More generally, any bicategory is equivalent to a $2$-category. However, this is a lucky accident that does not happen for higher dimensional higher categories. For instance, a tri-groupoid encoding the $3$-type of the $2$-sphere cannot be rigidified to a strict $3$-groupoid (\emph{cf.} \cite[Section 2.7.]{simpsonhomotopy}). These problems become more and more untractable as the dimension gets bigger.

One consequence of our main result can be vaguely phrased by saying that the problem of rigidification of higher categories only comes from the invertible cells and as long as one does not to try to rigidify them, it should be possible to get a strict model of the higher category. A more precise statement of what we prove is that the homotopy theory of $(\infty,n)$-categories is equivalent to that of strict $n$-categories internal to Kan complexes.  According to the homotopy hypothesis, the homotopy theory of any coherent enough notion of $\infty$-groupoid should be equivalent to the homotopy theory of CW-complexes up to homotopy equivalences. It is well-known that the homotopy theory of Kan complexes is equivalent to that of CW-complexes and many mathematicians actually take Kan complexes as a definition of $\infty$-groupoids.

Our method for proving this rigidification result is to put a model structure on the category of $n$-categories internal to simplicial sets that is equipped with a Quillen equivalence to the model category of $\Theta_n$-spaces with the Rezk model structure. It is widely accepted by the mathematical community that $\Theta_n$-spaces form a good model of $(\infty,n)$ categories. In \cite{bergnercomparison}, Bergner and Rezk  compare this model to other reasonable models of $(\infty,n)$-categories. This model also satisfies the axiomatization of $(\infty,n)$-categories given by Barwick and Schommer-Pries in \cite{barwickunicity}.

A partial result in this direction was obtained by the second author in \cite{horelmodel}. One of the main result of that paper was to prove an equivalence between simplicial spaces with the complete Segal space model structure with a model structure on categories internal to simplicial sets. In that case the rigidification result is of course not very surprising because by work of Bergner \cite{bergnerthree}, we even know that a complete Segal space can be rigidified to a simplicially enriched category.

When we tried to generalize the result of \cite{horelmodel} to $(\infty,n)$-categories, we realized that, not only could it be done but also that the proof used very little about the category of $n$-categories. The main point is that $n$-categories form a locally presentable category and that there is a full subcategory $\Theta_n$ such that the associated nerve construction $n\Cat\to\Set^{\Theta_n\op}$ is fully faithful and preserves coproducts and filtered colimits. 

We can thus abstract this situation and consider pairs consisting of a locally presentable category $\cat{X}$ with a full subcategory $\Xi$ satisfying the following assumptions:

\begin{assu}\label{assumptions X}
\hspace{0pt}
\begin{itemize}
 \item $\Xi$ is dense in $\X$, i.e. the functor $N:\X\to \Set^{\Xi\op}$ sending $X$ to $\xi\mapsto \X(\xi,X)$ is fully faithful.
 \item For any $\xi$ in $\Xi$, the functor $\X(\xi,-)$ preserves filtered colimits.
 \item For any $\xi$ in $\Xi$, the functor $\X(\xi,-)$ preserves finite coproducts 
\end{itemize}
\end{assu}

It turns out that the category of $n$-categories is not the only interesting example of such a situation. One can for instance take $\X$ to be the category of colored operads and $\Xi$ to be its subcategory $\Omega$. One can also take $\X$ to be the category $\Cat^{\otimes n}$ of $n$-fold categories and the subcategory $\Delta^n$. In fact, we prove in Proposition \ref{prop: main prop finite connected limit sketch}, that given a locally presentable category $\cat{X}$, the existence of a full subcategory $\Xi$ satisfying the assumptions \ref{assumptions X} is equivalent to the fact that $\cat{X}$ is the category of models for a finite connected limit sketch. 

Our main result is given by the following Theorem.

\begin{theo*}[\ref{theo: main}]
Let $(\cat{X},\Xi)$ be a pair satisfying the assumptions \ref{assumptions X} and let $\S$ be a Grothendieck topos equipped with a combinatorial model structure in which the cofibrations are monomorphisms. The projective model structure on $\S^{\Xi\op}$ as well as any of its left Bousfield localizations can be lifted to $\S\otimes\X$, the category of objects of $\X$ internal to $\S$, via the nerve functor. The resulting adjunction between $\S^{\Xi\op}$ and $\S\otimes\X$ is moreover a Quillen equivalence.
\end{theo*}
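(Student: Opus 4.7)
The plan is to apply Kan's transfer theorem to lift the (possibly localized) model structure from $\S^{\Xi\op}$ along the right adjoint $N$, and then to deduce the Quillen equivalence by exploiting the full faithfulness of $N$ together with its preservation of enough colimits.

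First I would describe the adjunction $c \dashv N$. By Proposition \ref{prop: main prop finite connected limit sketch}, $\X$ is the category of models in $\Set$ of a limit sketch on $\Xi$ whose cones have finite connected base. Since $\S$ is a topos, $\S\otimes\X$ is the category of models of the same sketch in $\S$, which is a reflective, locally presentable subcategory of $\S^{\Xi\op}$, and the full faithfulness of $N$ is the $\S$-valued counterpart of the density of $\Xi$ in $\X$. The decisive property that will be used throughout is that finite connected limits in the topos $\S$ commute with filtered colimits and with arbitrary coproducts; consequently $N$ preserves filtered colimits, coproducts, and more generally any colimit in $\S^{\Xi\op}$ that commutes levelwise with finite connected limits.

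Next I would apply Kan's transfer theorem. The generating (trivial) cofibrations of $\S\otimes\X$ are taken to be $c(I)$ and $c(J)$, where $I,J$ are such sets for the (localized) projective structure on $\S^{\Xi\op}$; smallness is automatic from local presentability. The heart of the transfer is to show that every relative $c(J)$-cell complex in $\S\otimes\X$ is sent to a weak equivalence by $N$. For this I would analyze a single pushout
\[ c(A) \longleftarrow c(B) \longrightarrow Z \]
with $B \to A$ in $J$, by comparing it with the pushout $NZ \cup_B A$ in $\S^{\Xi\op}$. The monomorphism hypothesis on cofibrations, combined with the colimit-preservation properties of $N$ and the finite connected structure of the sketch, is what allows this comparison to be carried out up to weak equivalence. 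Once the comparison is in place, the fact that $B \to A$ is a trivial cofibration downstairs yields the required weak equivalence upstairs.

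For the Quillen equivalence, the counit $cN \to \id$ is an isomorphism by full faithfulness of $N$, which takes care of the derived counit. The derived unit requires showing that for cofibrant $X \in \S^{\Xi\op}$ the map $X \to NcX$ is a weak equivalence in the (localized) projective structure; I would establish this inductively along a cell-complex presentation of $X$: the claim is true for the generating cells $\xi \otimes s$ with $\xi \in \Xi$ (where $Nc$ is naturally the identity), and the same colimit-preservation and pushout analysis used in the transfer step propagates the property through pushouts and transfinite composition. The main obstacle throughout is controlling the interaction between $N$ and pushouts of projective cofibrations: colimits in $\S\otimes\X$ do not agree with those in $\S^{\Xi\op}$ in general, so one must leverage the topos structure of $\S$, the monomorphism hypothesis on cofibrations, and the finite connectedness of the cones of the sketch in order to keep the nerve functor under sufficient control at every stage of the argument.
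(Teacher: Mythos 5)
Your overall strategy coincides with the paper's: transfer the (localized) projective structure along the nerve, with the crux being the interaction of $N$ with pushouts along generating cofibrations, and deduce the Quillen equivalence from full faithfulness of $N$ together with the fact that the unit is invertible on cofibrant objects, proved cell by cell. This is exactly the skeleton of Propositions \ref{cor.key}, \ref{prop: projective model structure}, \ref{prop: iso unit} and \ref{prop: Quillen equivalence projective}, and your appeal to finite connected limits commuting with coproducts and filtered colimits in a topos is the content of Corollary \ref{coro:coproducts pres topos}.

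There is, however, a genuine gap at the step you yourself flag as the heart of the matter. You assert that ``the monomorphism hypothesis on cofibrations, combined with the colimit-preservation properties of $N$ and the finite connected structure of the sketch'' allows the pushout of $K\otimes\xi\to L\otimes\xi$ computed in $\S\otimes\X$ to be compared with the one computed in $\S^{\Xi\op}$, but you never say how these ingredients combine. Coproduct preservation of $N$ does not apply directly: in a general Grothendieck topos a monomorphism $K\to L$ is not a coproduct inclusion, so the pushout along $K\otimes\xi\to L\otimes\xi$ cannot be rewritten as gluing on a disjoint summand $(L-K)\otimes\xi$. The paper's resolution (Lemma \ref{lemma:key lemma}) is the one nonobvious idea of the whole proof: by Barr's theorem there is a surjective geometric morphism $\pi\colon\widetilde{\S}\to\S$ from a Boolean topos; $\pi^*\otimes\id$ creates colimits (Lemma \ref{lemm:pi^*creates colimts}) and commutes with the nerve (Proposition \ref{prop: N pi=pi N}), so one may assume $\S$ Boolean, where $L\cong K\sqcup(L-K)$ and the pushout literally becomes a coproduct, which $N$ preserves. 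This yields that $N$ sends these squares to honest pushouts --- not merely a comparison ``up to weak equivalence'' as you propose --- which is what makes both the transfer and the \emph{isomorphism} statement for the unit go through with no properness hypothesis at the projective stage. Without this reduction, or a substitute for it, your comparison step is unproved. A secondary issue: for the localized case one cannot feed Kan's transfer theorem an explicit set of generating trivial cofibrations of the localization; the paper instead localizes the transferred projective structure on $\S\otimes\X$ at $\L(\id\otimes S)(U)$, invokes \cite[Theorem 3.3.20]{hirschhornmodel}, and checks separately that $\id\otimes N$ still detects fibrations using its full faithfulness, all of which requires the left properness assumption appearing in Theorem \ref{theo: main}.
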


We advise the reader to not read this paper linearly but rather to start from the last section where the most interesting applications are developed and to refer to the previous sections as needed. In particular Theorems \ref{theo:main operads}, \ref{theo:main n-cat} and \ref{theo: main n-fold} examine the consequences of the above theorem to the theory of operads, $n$-categories, $n$-fold categories. In each of these cases, this main theorem implies a rigidification result which can be expressed by saying that any homotopy operad (resp. $n$-categories, resp. $n$-fold categories) internal to $\S$ is levelwise equivalent to one which is strict.

\subsection*{Acknowledgements}
\emph{GC :} I wish to thank Dimitri Ara for suggesting to look at this problem, Ieke Moerdijk for his support and Javier J. Guti\'errez, Simon Henry and Joost Nuiten for many insightful discussions.

\emph{GH :} I wish to thank the Hausdorff Institute for Mathematics in Bonn for providing an excellent work environment in the summer 2015 during which most of this paper was written. I also want to thank Claudia Scheimbauer and Chris Schommer-Pries for their interest in this project.

We both thank the anonymous referee for several helpful comments.

\section{A few facts on locally presentable categories}

Let $\lambda$ be a regular cardinal. Given a small category $C$ with all $\lambda$-small colimits, we denote by $\Indl{\lambda}(C)$ the category of functors $C\op\to\Set$ that preserve $\lambda$-small limits. When $\lambda=\omega$ is the smallest infinite cardinal, we write $\Indl{}$ instead of $\Indl{\omega}$.

\begin{defi}
A \textbf{$\lambda$-locally presentable category} is a category $\cat{C}$ that is equivalent to $\Indl{\lambda}(A)$ with $A$ a small category with all $\lambda$-small colimits. A locally presentable category is a category that is $\lambda$-locally presentable for some $\lambda$.
\end{defi}

\begin{prop}\label{prop: criterion lambda-presentability}
Let $\cat{C}$ be a category and $I:\cat{C}\to\Fun(A,\Set)$ be a fully faithful right adjoint that preserves $\lambda$-filtered colimits. Then $\cat{C}$ is $\lambda$-locally presentable.
\end{prop}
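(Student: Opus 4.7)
The plan is to exhibit $\cat{C}$ as a reflective subcategory of the presheaf category $\Fun(A,\Set)$ and then to invoke the classical criterion that a reflective subcategory of a $\lambda$-locally presentable category whose inclusion preserves $\lambda$-filtered colimits is itself $\lambda$-locally presentable.

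First I would let $L\colon\Fun(A,\Set)\to\cat{C}$ denote the left adjoint of $I$. Since $I$ is fully faithful, the counit $LI\to\id_{\cat{C}}$ is an isomorphism, so $I$ identifies $\cat{C}$ with the full reflective subcategory of $\Fun(A,\Set)$ on the presheaves $F$ whose unit $F\to ILF$ is invertible. In particular $\cat{C}$ inherits all colimits from $\Fun(A,\Set)$ by reflecting with $L$, and $I$ is conservative.

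The main step is to produce a small set of $\lambda$-presentable strong generators. The presheaf category $\Fun(A,\Set)$ is locally finitely presentable, hence $\lambda$-locally presentable, and its $\lambda$-presentable objects form, up to isomorphism, an essentially small set $G$ (the $\lambda$-small colimits of representables). I would then show that the set $\{LP : P\in G\}$ consists of $\lambda$-presentable objects of $\cat{C}$: for any $\lambda$-filtered diagram $X_\bullet$ in $\cat{C}$, the adjunction together with the hypothesis that $I$ preserves $\lambda$-filtered colimits and that $P$ is $\lambda$-presentable in $\Fun(A,\Set)$ yield
\[
\Hom_{\cat{C}}(LP,\colim X_\bullet)\cong\Hom_{\Fun(A,\Set)}(P,\colim IX_\bullet)\cong\colim\Hom_{\cat{C}}(LP,X_\bullet).
\]
The fact that these objects form a strong generator of $\cat{C}$ is then transported from the presheaf category through the adjunction using conservativity of $I$. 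The conclusion follows from the standard characterization of $\lambda$-locally presentable categories as the cocomplete categories admitting a small strong generator by $\lambda$-presentable objects (see Adámek--Rosický, \emph{Locally presentable and accessible categories}).

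I do not anticipate any genuine obstacle: the only substantial use of the hypothesis that $I$ preserves $\lambda$-filtered colimits is in the computation above, which is exactly what is needed for $L$ to send $\lambda$-presentable objects to $\lambda$-presentable objects. Everything else is formal manipulation of the reflective adjunction $L\dashv I$ together with the fact that presheaf categories are locally finitely presentable.
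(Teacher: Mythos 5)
Your proof is correct, but it takes a different route from the paper, whose entire proof of this proposition is the citation \cite[Theorem 1.46]{adamekrosicky} --- that theorem states precisely that a category is locally $\lambda$-presentable if and only if it is equivalent to a full reflective subcategory of a presheaf category $\Fun(A,\Set)$ closed under $\lambda$-filtered colimits, which is the content of the proposition once one notes that a fully faithful right adjoint exhibits $\cat{C}$ as such a subcategory. What you do instead is essentially reprove that direction of the cited theorem by reducing to the other standard characterization of local $\lambda$-presentability (cocomplete with a small strong generator consisting of $\lambda$-presentable objects, \cite[Theorem 1.20]{adamekrosicky}): you identify $\cat{C}$ with a reflective subcategory, observe that the computation
\[
\Hom_{\cat{C}}(LP,\colim X_\bullet)\cong\Hom_{\Fun(A,\Set)}(P,\colim IX_\bullet)\cong\colim\Hom_{\cat{C}}(LP,X_\bullet)
\]
shows $L$ carries $\lambda$-presentable presheaves to $\lambda$-presentable objects of $\cat{C}$ (this is indeed the one place the hypothesis on $I$ is used), and then transport strong generation along $I$. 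All of this is sound; the only step you leave implicit, and which is worth spelling out, is the transport itself: if $\Hom_{\cat{C}}(LP,f)$ is bijective for every $\lambda$-presentable $P$, then by adjunction $\Hom(P,If)$ is bijective for every representable $P$, so $If$ is an isomorphism by Yoneda, and $f$ is an isomorphism because the fully faithful $I$ is conservative (faithfulness of the restricted Yoneda functor is obtained the same way). The citation buys brevity; your argument buys a self-contained proof that makes visible exactly where preservation of $\lambda$-filtered colimits enters, and it is in substance the proof one would find behind the cited theorem.
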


\begin{proof}
See \cite[Theorem 1.46]{adamekrosicky}.
\end{proof}

If $\cat{C}$ and $\cat{D}$ are two locally presentable categories, we denote by $\cat{C}\otimes\cat{D}$ their tensor product in the category of locally presentable categories. This is a locally presentable category with a functor $\cat{C}\times\cat{D}\to\cat{C}\otimes\cat{D}$ that preserves colimits in each variable separately and which is initial with this property.

We now recall how this tensor product is constructed explicitly. It is not restrictive to suppose that $\cat{C}$ and $\cat{D}$ are $\lambda$-presentable categories for some regular cardinal $\lambda$.
Then $\cat{C}\simeq  \Indl{\lambda}(A)$ and $\cat{D}\simeq  \Indl{\lambda}(B)$ for some small full subcategory $A$ (resp. $B$) of $\cat{C}$ (resp. $\cat{D}$).
We can then construct the category $\cat{C}\otimes\cat{D}:=\FcatRRl{A\op\times B\op}{\Set}{\lambda}$. This is the full subcategory of $\Fcat{A\op\times B\op}{\Set}$ spanned by the functors that preserve $\lambda$-small limits in each variable. This category is also equivalent to the category $\FcatRRl{\cat{C}\op\times\cat{D}\op}{\Set}{}$ of functors $\cat{C}\op\times\cat{D}\op\to\Set$ preserving limits separately in each variable. Note however that this second definition does not make it obvious that $\cat{C}\otimes\cat{D}$ has small hom-sets.

The functor $\cat{C}\times\cat{D}\to\cat{C}\otimes\cat{D}$ sends $(c,d)\in \cat{C}\times \cat{D}$ to the functor $c\otimes d\in\Fcat{\cat{C}\op\times \cat{D}\op}{\Set}$ such that $c\otimes d(c',d')=\cat{C}(c',c)\times \cat{D}(d',d)$.
for every $c',d'\in \cat{C}\op\times \cat{D}\op$. It is easy to verify that $c\otimes d$ preserves limits in each variables and hence, we have indeed constructed a functor $\cat{C}\times\cat{D}\to\cat{C}\otimes\cat{D}$.

Using the fact that the map $\cat{D}\to\FcatR{\cat{D}\op}{\Set}$ sending $d$ to the limit preserving functor $\cat{D}(-,d)$ from $\cat{D}\op$ to $\Set$ is an equivalence of categories, we see that $\cat{C}\otimes\cat{D}$ is also equivalent to the category $\FcatR{\cat{C}\op}{\cat{D}}$. Note that with this last description, the commutativity of the tensor product is not obvious. Through the equivalence $\cat{C}\otimes\cat{D}\simeq\FcatR{\cat{C}\op}{\cat{D}}$, the object $c\otimes d$ is the limit preserving functor $\cat{C}\op\to\cat{D}$ such that we have a natural isomorphism
\begin{equation}\label{c otimes d}
\FcatR{\cat{C}\op}{\cat{D}}(c\otimes d,F)\cong\cat{D}(d,F(c)).
\end{equation}

Now, we study the functoriality of this tensor product. Let us assume that we are given an adjunction $u^*:\cat{C}\rightleftarrows\cat{D}:u_*$ between locally presentable categories and let $\cat{Z}$ be any locally presentable category. We can construct a functor $u_*\otimes\id:\cat{D}\otimes\cat{Z}\to\cat{C}\otimes\cat{Z}$ by identifying $\cat{C}\otimes\cat{Z}$ with $\FcatR{\cat{C}\op}{\cat{Z}}$ and $\cat{D}\otimes\cat{Z}$ with $\FcatR{\cat{D}\op}{\cat{Z}}$ and taking the functor induced by precomposition with $(u^*)\op$. We can also construct the functor $\id\otimes u_*:\cat{Z}\otimes\cat{D}\to\cat{Z}\otimes\cat{C}$ obtained by identifying $\cat{Z}\otimes\cat{C}$ with $\FcatR{\cat{Z}\op}{\cat{C}}$ and $\cat{Z}\otimes\cat{D}$ with $\FcatR{\cat{Z}\op}{\cat{D}}$ and taking the functor induced by postcomposition with $u_*$.

\begin{lemm}\label{lemm:right-left induced}
Let $u^*\colon \cat{C} \rightleftarrows \cat{D} \colon u_*$ be an adjunction between locally presentable categories and let $\cat{Z}$ be 
another locally presentable category. The diagram
\[
 \xymatrix{\cat{Z}\otimes \cat{C} \ar[d]_{\tau} \ar[r]^{\id\otimes u_*} & \cat{Z}\otimes \cat{D} \ar[d]^{\tau}\\
           \cat{C}\otimes \cat{Z} \ar[r]^{u_*\otimes \id} & \cat{D}\otimes \cat{Z}}
\] 
commutes up to a natural isomorphism. 
\end{lemm}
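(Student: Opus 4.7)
The plan is to exploit the symmetric description of the tensor product recalled in the excerpt, namely the equivalence $\cat{C}\otimes\cat{D}\simeq\FcatRRl{\cat{C}\op\times\cat{D}\op}{\Set}{}$, in which the commutativity of $\otimes$ is manifest: the symmetry $\tau\colon\cat{Z}\otimes\cat{C}\simeq\cat{C}\otimes\cat{Z}$ becomes (up to the canonical identifications) precomposition with the swap equivalence $\cat{C}\op\times\cat{Z}\op\simeq\cat{Z}\op\times\cat{C}\op$. The goal is then to express both horizontal arrows of the square as precomposition functors in this model as well; once this is done, the diagram commutes strictly, purely by associativity of composition of functors.

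To do so, I would first rewrite $u_*\otimes\id$ through the chain of equivalences $\cat{D}\otimes\cat{Z}\simeq\FcatR{\cat{D}\op}{\cat{Z}}\simeq\FcatRRl{\cat{D}\op\times\cat{Z}\op}{\Set}{}$. The excerpt defines $u_*\otimes\id$ as precomposition by $(u^*)\op$ on the middle term, and sending $G\mapsto((d,z)\mapsto\cat{Z}(z,G(d)))$ transports this to precomposition by $(u^*)\op\times\id_{\cat{Z}\op}$ on the bifunctor model. I would then perform the analogous translation for $\id\otimes u_*$: the excerpt defines it as postcomposition by $u_*$ on $\FcatR{\cat{Z}\op}{\cat{C}}$, and under the identification $F\mapsto((z,c)\mapsto\cat{C}(c,F(z)))$, together with the adjunction isomorphism $\cat{D}(d,u_*X)\cong\cat{C}(u^*d,X)$, this becomes precomposition by $\id_{\cat{Z}\op}\times(u^*)\op$ on $\FcatRRl{\cat{Z}\op\times\cat{D}\op}{\Set}{}$. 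This adjunction isomorphism is really the only non-formal ingredient of the argument.

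With these descriptions in hand, evaluating either composite of the square on a bifunctor $F\colon\cat{Z}\op\times\cat{C}\op\to\Set$ yields the same bifunctor $(d,z)\mapsto F(z,u^*d)\colon\cat{D}\op\times\cat{Z}\op\to\Set$, so the diagram commutes strictly in the symmetric description, and transporting back through the identifications furnishes the desired natural isomorphism on the original square. The main obstacle, which is really a bookkeeping step rather than a conceptual one, is to verify that the abstract symmetry $\tau$ on $\otimes$ indeed corresponds to the swap in the bifunctor model; this follows from the formula $c\otimes d(c',d')=\cat{C}(c',c)\times\cat{D}(d',d)$ recorded in the excerpt together with the universal property characterising the tensor product on locally presentable categories.
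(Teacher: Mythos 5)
Your proposal is correct and is essentially the paper's own argument: both compute the two composites of the square in the bifunctor model $\FcatRRl{\cat{D}\op\times\cat{Z}\op}{\Set}{}$ and identify the results, $(d,z)\mapsto\cat{D}(d,u_*F(z))$ versus $(d,z)\mapsto\cat{C}(u^*d,F(z))$, via the adjunction isomorphism, which is the single non-formal ingredient in either version. Your phrasing of the horizontal maps as precompositions by $(u^*)\op\times\id$ and $\id\times(u^*)\op$ is just a slightly more systematic packaging of the same computation.
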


\begin{proof}
Let $F\in \FcatR{\cat{Z}\op}{\cat{C}}\simeq\cat{Z}\otimes\cat{C} $. Using formula \ref{c otimes d}, we see that the functor $\tau(\id\otimes u_*)(F)$ sends $(d,z)\in \cat{D}\op\times \cat{Z}\op$ to $\cat{D}(d,u_*F(z))$. On the other hand the functor $(u_*\otimes \id)\tau(F)$ sends $(d,z)$ to $\cat{D}(u^*(d),F(z))$. Thus the two functors $(u_*\otimes \id)\tau(F)$ and $\tau(\id\otimes u_*)(F)$ are isomorphic and moreover this isomorphism can be chosen to be functorial in $F$. This proves the commutativity of the square.
\end{proof}

Keeping the same notations as before, we can also construct a functor $\cat{Z}\times\cat{C}\to\cat{Z}\otimes\cat{D}$ sending $(z,c)$ to $z\otimes u^*(c)$. This functor preserves colimits in both variables and hence by the universal property of the tensor product induces a colimit preserving functor $\cat{Z}\otimes\cat{C}\to\cat{Z}\otimes\cat{D}$ that we denote by $\id\otimes u^*$. 

\begin{lemm}\label{lemm:adjunction tensor product}
The functor $\id\otimes u_*$ is right adjoint to $\id\otimes u^*$.
\end{lemm}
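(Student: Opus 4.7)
The plan is to use the equivalences $\cat{Z}\otimes\cat{C}\simeq\FcatR{\cat{Z}\op}{\cat{C}}$ and $\cat{Z}\otimes\cat{D}\simeq\FcatR{\cat{Z}\op}{\cat{D}}$ throughout. Under these identifications, $\id\otimes u_*$ is simply postcomposition with $u_*$, and this does land in the full subcategory of limit-preserving functors because $u_*$, being a right adjoint, preserves limits.

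First I would observe that $\id\otimes u^*$, being cocontinuous by construction and going between locally presentable categories, automatically admits a right adjoint $R$ by the adjoint functor theorem. The remaining task is to exhibit a natural isomorphism $R\cong\id\otimes u_*$. For any $G\in\cat{Z}\otimes\cat{D}$, $z\in\cat{Z}$ and $c\in\cat{C}$, combining \eqref{c otimes d}, the defining identity $(\id\otimes u^*)(z\otimes c)=z\otimes u^*(c)$, a second use of \eqref{c otimes d}, and the adjunction $u^*\dashv u_*$ produces a natural chain
\[
\cat{C}(c,R(G)(z))\cong\Hom(z\otimes c,R(G))\cong\Hom(z\otimes u^*(c),G)\cong\cat{D}(u^*(c),G(z))\cong\cat{C}(c,u_*(G(z))).
\]
Applying the Yoneda lemma in the variable $c$ then gives $R(G)(z)\cong u_*(G(z))=(\id\otimes u_*)(G)(z)$, naturally in $z$ and $G$, so $R\cong\id\otimes u_*$ as desired.

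I do not anticipate any substantive obstacle: the argument is a formal consequence of the universal property of the tensor product together with the Yoneda lemma. The only point requiring attention is to check that every step in the chain of isomorphisms above is natural in all relevant variables ($c$, $z$ and $G$), but this is immediate from the naturality of \eqref{c otimes d} and of the unit/counit of the adjunction $u^*\dashv u_*$.
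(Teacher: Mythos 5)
Your proof is correct and is essentially the paper's argument: both rest on the same chain of natural isomorphisms $\Hom(z\otimes u^*(c),F)\cong\cat{D}(u^*(c),F(z))\cong\cat{C}(c,u_*F(z))\cong\Hom(z\otimes c,(\id\otimes u_*)F)$ coming from formula \eqref{c otimes d} and the adjunction $u^*\dashv u_*$. The only difference is packaging — you first invoke the adjoint functor theorem and then identify the abstract right adjoint by Yoneda, whereas the paper verifies the adjunction isomorphism directly on the generating objects $z\otimes c$; this is a harmless (arguably slightly more careful) variant of the same computation.
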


\begin{proof}
Let $F:\cat{Z}\op\to\cat{D}$ be an object of $\cat{Z}\otimes\cat{D}\simeq\FcatR{\cat{Z}\op}{\cat{D}}$. Using equation \ref{c otimes d}, we find a sequence of natural isomorphisms
\[\cat{Z}\otimes\cat{D}((\id\otimes u^*)(z\otimes c),F)\cong\cat{Z}\otimes\cat{D}(z\otimes u^*c,F)\cong\cat{Z}(z,F(u^*c))\cong\cat{Z}\otimes\cat{C}(z\otimes c,(\id\otimes u_*)F)\]
which proves the desired result.
\end{proof}

\begin{coro}\label{coro:induced functor}
Let $\cat{C}$ and $\cat{D}$ be locally presentable categories and let $u^*\colon \cat{C} \rightleftarrows \cat{D} \colon u_*$ be an adjunction
between them. Suppose that $u^*$ commutes with $\lambda$-small limits. Let $\cat{Z}\simeq\Indl{\lambda}(A)$ be a $\lambda$-locally presentable category. Then, the functor $\id\otimes u^*$ is isomorphic to the composite
\[\cat{Z}\otimes\cat{C}\simeq\FcatRl{\lambda}{A\op}{\cat{C}}\xrightarrow{ u^*\circ -}\FcatRl{\lambda}{A\op}{\cat{D}}\simeq\cat{Z}\otimes\cat{D}\]
where the middle map is given by postcomposition with $u^*$
\end{coro}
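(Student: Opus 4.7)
The plan is to use the characterization of $\id\otimes u^*$ as a left adjoint, together with the explicit description of its right adjoint already available from the preceding lemma, and invoke uniqueness of adjoints.

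First, I would unpack what $\id\otimes u_*$ becomes after restricting along the dense inclusion $A\op\hookrightarrow\cat{Z}\op$. The equivalence $\cat{Z}\otimes\cat{C}\simeq\FcatR{\cat{Z}\op}{\cat{C}}$ discussed earlier restricts further to $\FcatRl{\lambda}{A\op}{\cat{C}}$, because a limit-preserving functor out of $\cat{Z}\op\simeq\Indl{\lambda}(A)\op$ is determined by, and determined by, a $\lambda$-small-limit-preserving functor out of $A\op$; and this equivalence is clearly natural in the target variable. By definition, $\id\otimes u_*:\cat{Z}\otimes\cat{D}\to\cat{Z}\otimes\cat{C}$ is the functor induced by postcomposition with $u_*$ on $\FcatR{\cat{Z}\op}{-}$, and since restriction along $A\op\hookrightarrow\cat{Z}\op$ commutes on the nose with postcomposition, under the identification $\cat{Z}\otimes(-)\simeq\FcatRl{\lambda}{A\op}{-}$ the functor $\id\otimes u_*$ is simply $u_*\circ-$.

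Next, I would check that postcomposition with $u^*$ defines a functor $P:\FcatRl{\lambda}{A\op}{\cat{C}}\to\FcatRl{\lambda}{A\op}{\cat{D}}$: this is exactly where the hypothesis that $u^*$ preserves $\lambda$-small limits is used, to ensure $u^*\circ F$ is again $\lambda$-small-limit preserving. The pointwise adjunction $u^*\dashv u_*$ then yields, for any $F\in\FcatRl{\lambda}{A\op}{\cat{C}}$ and $G\in\FcatRl{\lambda}{A\op}{\cat{D}}$, a bijection between natural transformations $u^*\circ F\to G$ and natural transformations $F\to u_*\circ G$, and this bijection is manifestly natural in both arguments. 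Thus $P$ is left adjoint to postcomposition with $u_*$.

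Finally, Lemma \ref{lemm:adjunction tensor product} asserts that $\id\otimes u^*$ is left adjoint to $\id\otimes u_*$, and the previous step identifies $\id\otimes u_*$ with postcomposition by $u_*$. By the uniqueness of left adjoints up to canonical natural isomorphism, $\id\otimes u^*$ must be naturally isomorphic to $P$, which is the claim. I do not expect a serious obstacle: the only subtle point is tracking the various equivalences and checking that the identification of $\id\otimes u_*$ with postcomposition really does restrict from $\FcatR{\cat{Z}\op}{-}$ to $\FcatRl{\lambda}{A\op}{-}$ along $A\op\hookrightarrow\cat{Z}\op$, which is immediate from naturality.
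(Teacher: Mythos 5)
Your proposal is correct and follows essentially the same route as the paper: identify $\id\otimes u_*$ with postcomposition by $u_*$ on $\FcatRl{\lambda}{A\op}{-}$, deduce from the pointwise adjunction that postcomposition by $u^*$ is its left adjoint (using the hypothesis on $\lambda$-small limits to know this postcomposition lands in the right subcategory), and conclude by Lemma \ref{lemm:adjunction tensor product} and uniqueness of left adjoints. You simply spell out the steps the paper declares ``obvious.''
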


\begin{proof}
By lemma \ref{lemm:adjunction tensor product} and the unicity of a left adjoint, it suffices to prove that postcomposition with $u^*$ is left adjoint to $\id\otimes u_*$. But since $\id\otimes u_*$ is isomorphic to the composite
\[\cat{Z}\otimes\cat{D}\simeq\FcatRl{\lambda}{A\op}{\cat{D}}\xrightarrow{ u_*\circ -}\FcatRl{\lambda}{A\op}{\cat{C}}\simeq\cat{Z}\otimes\cat{C},\]
the result is obvious.
\end{proof}

\section{The setup}

Let $\X$ be a category with all colimits and let $\Xi$ be a small full subcategory of $\X$. We assume that the pair $(\X,\Xi)$ satisfies the assumptions \ref{assumptions X}. Let us denote by 
$S:~\cat{Set}^{\Xi\op}\to~\X$ the left Kan extension of
the inclusion $\Xi \rightarrow \X$ along the Yoneda embedding $y:~\Xi\to~\cat{Set}^{\Xi\op}$. The functor $S$ has always a right adjoint, the \textbf{nerve} functor, that we denote by $N$. Concretely, if $A$ is an object of $\cat{X}$, then $NA(\xi)=\cat{X}(\xi,A)$.

Using the first point of \ref{assumptions X}, we see that $\X$ is a full reflective subcategory of $\cat{Set}^{\Xi\op}$ via the adjunction $(S,N)$. In particular, according to Proposition \ref{prop: criterion lambda-presentability}, $\X$ is an $\omega$-locally presentable category and hence has all small limits. The limits in $\cat{X}$ are created by the nerve functor. Under the other two points of assumptions \ref{assumptions X}, we find that the finite coproducts and filtered colimits in $\X$ are also created by the nerve functor.

\begin{example}
The main examples that we have in mind are the following. They will be developed in details in the last section.
\begin{enumerate}
\item The category $\X$ is the category $\cat{Cat}$ of small categories. The category $\Xi$ is the category $\Delta$ of finite linearly ordered sets. The functor $N$ is the usual nerve functor from $\cat{Cat}$ to simplicial sets.
\item The category $\X$ is the category $\cat{Op}$ of small colored operads in sets. The category $\Xi$ is the category $\Omega$ of dendrices. The functor $N$ is the dendroidal nerve functor from colored operads to dendroidal sets.
\item The category $\X$ is the category of $n$-fold categories, $\Xi$ is $\Delta^n$.
\item The category $\X$ is the category of $n$-categories, $\Xi$ is Joyal's theta category $\Theta_n$.
\item $\X$ is the category of models for a finite connected limit sketch. In that case the full subcategory $\Xi$ can be constructed as explained in Section \ref{sec:finite connected limit sketches}.
\end{enumerate}
\end{example}

We also have a category of geometric objects $\S$ that we generically call ``spaces''. We make the following assumptions on $\S$.

\begin{assu}\label{assumption S}
We assume that $\S$ is a Grothendieck topos equipped with a combinatorial model structure in which the cofibrations are monomorphisms. 
\end{assu}

\begin{example}
The prototypical example of a category $\S$ satisfying the assumptions of \ref{assumption S} is the category $\sSet$ of simplicial sets with its standard model structure. If $I$ is a small category, the category of simplicial presheaves on $I$, $\Set^{\Delta\op\times I\op}$ with its projective or injective model structure also satisfies our hypothesis. If $I$ is a site then the Joyal model structure (\emph{cf.} \cite[II.Theorem 5.9]{jardine2015local}) on the category of simplicial sheaves on $I$ denoted $\cat{Sh}(I,\sSet)$ also satisfies the hypothesis. 

It is also the case that if $\S$ is a category equipped with a model structure satisfying \ref{assumption S}, the same is true for any Bousfield localization of $\S$. Thus, for any site $I$, the category $(\sSet)^I$ with the local model structure of Jardine's (\cite[II.Theorem 5.8]{jardine2015local}) also satisfies the assumptions \ref{assumption S}. Another example of interest that satisfies \ref{assumption S} is the model category of motivic spaces. It is obtained by localizing the Jardine model structure of presheaves over the Nisnevich site of a field $k$ at the $\mathbb{A}^1$-equivalences.

Note however that the usual model structure on topological spaces does not satisfy the assumptions \ref{assumption S}. The main issue is that topological spaces do not form a topos.
\end{example}

\begin{prop}\label{prop: N pi=pi N}
Let $(\X,\Xi)$ be a pair satisfying assumptions \ref{assumptions X}. Let $\pi \colon \cat{R} \to \S $ be a geometric morphism between Grothendieck toposes. The diagram
\[
\xymatrix{
\X\otimes\S\ar[r]^-{N\otimes\id}\ar[d]_{\id\otimes\pi^*}&\S^{\Xi\op}\ar[d]^{\id\otimes\pi^*}\\
\X\otimes\cat{R}\ar[r]_-{N\otimes \id}&\cat{R}^{\Xi\op}
}
\]
commutes up to isomorphism.
\end{prop}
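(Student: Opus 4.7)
My plan is to identify each of the four corners of the square with a concrete functor category and each of the four arrows with a transparent operation on such functors, reducing the commutativity to a manifest identity.

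The first step is to apply Corollary~\ref{coro:induced functor} with $\lambda=\omega$, which is legitimate since $\pi^*$ preserves finite limits as the inverse image part of a geometric morphism. Letting $A$ denote the essentially small subcategory of $\omega$-compact objects of $\X$, this identifies the left vertical arrow $\id\otimes\pi^*\colon\X\otimes\S\to\X\otimes\cat{R}$, via the equivalence $\X\otimes\S\simeq\FcatR{A\op}{\S}$, with the postcomposition map $F\mapsto\pi^*\circ F$. An analogous application (or direct inspection) identifies the right vertical arrow with pointwise application of $\pi^*$ on $\S^{\Xi\op}=\Fun(\Xi\op,\S)$. I will also use that by the second clause of Assumption~\ref{assumptions X}, every object of $\Xi$ is compact in $\X$, so that $\Xi\subseteq A$.

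The second step is to show that, under the equivalence $\X\otimes\S\simeq\FcatR{\X\op}{\S}$, the nerve functor $N\otimes\id$ is precisely restriction along $\Xi\op\hookrightarrow\X\op$. Since $N\otimes\id$ is right adjoint to $S\otimes\id$ and $S(y\xi)=\xi$, we have $(S\otimes\id)(y\xi\otimes Y)=\xi\otimes Y$; chaining this adjunction with the characteristic formula~\eqref{c otimes d}, applied in both $\S^{\Xi\op}$ and $\X\otimes\S$, yields for any $F\in\X\otimes\S$, $\xi\in\Xi$ and $Y\in\S$ natural isomorphisms
\[
\S\bigl(Y,(N\otimes\id)(F)(\xi)\bigr)\cong(\X\otimes\S)(\xi\otimes Y,F)\cong\S\bigl(Y,F(\xi)\bigr),
\]
so that Yoneda's lemma forces $(N\otimes\id)(F)(\xi)\cong F(\xi)$, naturally in $\xi$ and $F$. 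With both arrows now identified, commutativity of the square is automatic: both composites send $F\in\FcatR{\X\op}{\S}$ to the presheaf $\xi\mapsto\pi^*(F(\xi))$ on $\Xi$.

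The one delicate point worth flagging is that $N\otimes\id$ is a right adjoint and therefore does not in general preserve arbitrary colimits, so the familiar reduction to generators $\xi\otimes Y\in\X\otimes\S$ via a cocontinuity argument is not directly available; the Yoneda calculation in the second step is exactly what replaces it.
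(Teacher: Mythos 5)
Your proof is correct and takes essentially the same route as the paper's: the paper likewise identifies $\X\otimes\S$ with a category of limit-preserving functors out of $\X\op$, invokes Corollary \ref{coro:induced functor} to realize both vertical arrows as postcomposition with $\pi^*$, and uses Lemma \ref{lemm:right-left induced} to realize both horizontal arrows as precomposition with $S$, which restricted to representables is exactly your ``restriction along $\Xi\op\hookrightarrow\X\op$''. Your Yoneda computation of $(N\otimes\id)(F)(\xi)\cong F(\xi)$ is simply a self-contained re-derivation of that last identification in place of the citation.
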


\begin{proof}
We indentify $\X\otimes \S$ with $\FcatR{\X\op}{\S}$.
By Lemma \ref{lemm:right-left induced} both $N\otimes\id$'s in the diagram are given by pre-composition with $S$ and by Corollary \ref{coro:induced functor}
both $\id\otimes\pi^*$'s in the diagram are given by post-composition with $\pi^*$.  
\end{proof}

\begin{rem}\label{rem:monomorphisms}
Recall that a geometric morphism $\pi \colon \cat{R} \to \S $ between toposes is \textbf{surjective} if and only if the inverse image functor $\pi^*$ is conservative. In a topos, a map $X\to Y$ is a monomorphism if and only if the induced map $X\to X\times_YX$ is an isomorphism. Given a surjective geometric morphism $\pi\colon\cat{R}\to\S$, the functor $\pi^*:\S\to \cat{R}$ preserves finite limits and is conservative, therefore it preserves and reflects monomorphisms. 
\end{rem}

\begin{prop}\label{prop:reflective localization}
Let $(\X,\Xi)$ be a pair of categories satisfying the assumptions \ref{assumptions X} and $\S$ a Grothendieck topos. The functor $\id\otimes N:\S\otimes\X\to \S^{\Xi\op}$ exhibits $\S\otimes\X$ as a reflective subcategory of $\S^{\Xi\op}$.
\end{prop}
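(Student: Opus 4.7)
The plan is to apply the functoriality of $\S \otimes (-)$ to the reflective adjunction $S \dashv N$ and read off the reflection from the counit it induces.

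By the first point of Assumptions~\ref{assumptions X}, the nerve $N \colon \X \to \Set^{\Xi\op}$ is fully faithful, which is equivalent to saying that the counit $\epsilon \colon SN \to \id_{\X}$ of the adjunction $(S,N)$ is a natural isomorphism. Applying Lemma~\ref{lemm:adjunction tensor product} with $\cat{Z} = \S$, tensoring this adjunction with $\S$ produces an adjunction
\[
\id \otimes S \colon \S \otimes \Set^{\Xi\op} \rightleftarrows \S \otimes \X \colon \id \otimes N,
\]
and the composite $(\id \otimes S)\circ(\id \otimes N)$ is canonically isomorphic to $\id \otimes (SN)$. Under this identification the counit of the tensored adjunction is the natural transformation $\id \otimes \epsilon$. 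Because the operation $\id_\S \otimes (-)$ is functorial, it takes the natural isomorphism $\epsilon$ to a natural isomorphism, so the counit $\id \otimes \epsilon$ is invertible and $\id \otimes N$ is fully faithful. This is precisely the statement that $\S \otimes \X$ is reflective in $\S \otimes \Set^{\Xi\op}$.

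To match the statement of the proposition, it remains to identify $\S \otimes \Set^{\Xi\op}$ with $\S^{\Xi\op}$. This is the canonical identification coming from the universal property of $\Set^{\Xi\op}$ as the free cocompletion of $\Xi$: combined with the description $\S \otimes \Set^{\Xi\op} \simeq \FcatR{(\Set^{\Xi\op})\op}{\S}$ recalled in Section~2, precomposition with the Yoneda embedding yields an equivalence with $\Fun(\Xi\op,\S) = \S^{\Xi\op}$. A short computation using formula~\ref{c otimes d} on the generators $z \otimes x$, with $z \in \S$ and $x \in \X$, shows that under this equivalence $\id \otimes N$ sends $z \otimes x$ to the functor $\xi \mapsto \X(\xi,x) \cdot z$, i.e.\ to the expected internal nerve.

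The only step requiring care is the bookkeeping of the several equivalences of the form $\cat{C} \otimes \cat{D} \simeq \FcatR{\cat{C}\op}{\cat{D}}$ and the verification that the counit of the tensored adjunction is indeed $\id \otimes \epsilon$; the essential content of the argument is the purely formal observation that tensoring a reflective adjunction with a fixed locally presentable category produces another reflective adjunction.
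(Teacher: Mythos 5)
Your proposal is correct and matches the paper's (very short) proof: the paper simply observes that $\id\otimes S$ is the left adjoint and that $\id\otimes N$ is ``clearly fully faithful.'' The only difference is that your fully-faithfulness step via the counit $\id\otimes\epsilon$ can be shortcut: under the identification $\S\otimes\X\simeq\FcatR{\S\op}{\X}$ the functor $\id\otimes N$ is just postcomposition with the fully faithful functor $N$, hence is itself fully faithful with no bookkeeping required.
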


\begin{proof}
This functor has a left adjoint given by $S:=\id\otimes S:\S\otimes\Set^{\Xi\op}\to \S\otimes\X$. Moreover $\id\otimes N$ is clearly fully faithful.
\end{proof}

\begin{lemm}\label{lemm:pi^*creates colimts}
Let $\pi \colon \cat{R} \to \S $ be a surjective geometric morphism between Grothendieck toposes. The functor $\pi^*\otimes\id:\S\otimes\X\to\cat{R}\otimes \X$ creates colimits.
\end{lemm}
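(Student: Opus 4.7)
The plan is to prove that $\pi^*\otimes\id:\S\otimes\X\to\cat{R}\otimes\X$ preserves small colimits and is conservative. Since $\S\otimes\X$ is cocomplete (being locally presentable), these two properties together imply that $\pi^*\otimes\id$ creates colimits, by the standard comparison argument: any diagram $D:I\to\S\otimes\X$ has a colimit $C$ which is preserved by $\pi^*\otimes\id$, and any other cocone over $D$ whose image is a colimit cocone in $\cat{R}\otimes\X$ receives a canonical comparison from $C$ that becomes an isomorphism after $\pi^*\otimes\id$, hence is itself an isomorphism by conservativity.

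Preservation of colimits is immediate: the symmetric analogue of Lemma \ref{lemm:adjunction tensor product}, applied to the adjunction $\pi^*\dashv\pi_*$, shows that $\pi^*\otimes\id$ is left adjoint to $\pi_*\otimes\id$, and therefore preserves all small colimits.

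For conservativity, I would first swap factors using Lemma \ref{lemm:right-left induced} to reduce to showing that $\id\otimes\pi^*:\X\otimes\S\to\X\otimes\cat{R}$ is conservative. Since $\X$ is $\omega$-locally presentable, we may write $\X\simeq\Indl{\omega}(A)$ for some small category $A$ with finite colimits. Because $\pi^*$ preserves finite limits (as the inverse image part of a geometric morphism), Corollary \ref{coro:induced functor} with $\lambda=\omega$ identifies $\id\otimes\pi^*$ with the postcomposition functor $\FcatRl{\omega}{A\op}{\S}\to\FcatRl{\omega}{A\op}{\cat{R}}$. A morphism in such a functor category is an isomorphism if and only if it is pointwise so in the target, and by Remark \ref{rem:monomorphisms} the functor $\pi^*$ is conservative, since $\pi$ is surjective. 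Combining these, postcomposition with $\pi^*$ is conservative, and so is $\pi^*\otimes\id$. The main obstacle I anticipate is not a deep categorical issue but the bookkeeping of the various identifications of $\S\otimes\X$ — first with $\X\otimes\S$ via the swap, then with a category of finite-limit-preserving functors on $A\op$ — in order to confirm that the functor induced by $\pi^*$ on each side is what it ought to be. Lemma \ref{lemm:right-left induced} and Corollary \ref{coro:induced functor} supply exactly this bookkeeping.
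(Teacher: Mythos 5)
Your argument is correct and matches the paper's strategy: both establish that $\pi^*\otimes\id$ is a colimit-preserving left adjoint out of a cocomplete (locally presentable) category and then reduce the remaining point, conservativity, to the pointwise conservativity of $\pi^*$ inside a functor category. The only real difference is how that reduction is performed --- the paper first applies the fully faithful (hence conservative) nerve $\id\otimes N$ and the commuting square of Proposition \ref{prop: N pi=pi N} to land in $\S^{\Xi\op}$, whereas you identify $\id\otimes\pi^*$ directly with postcomposition by the left exact, conservative functor $\pi^*$ on $\FcatRl{\omega}{A\op}{\S}$ via the swap and Corollary \ref{coro:induced functor}; since Proposition \ref{prop: N pi=pi N} is itself proved from exactly those two ingredients, the two routes coincide in substance.
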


\begin{proof}
A functor creates colimits if it is conservative and preserves colimits. The functor $\pi^*\otimes\id$ is a left adjoint and therefore preserves colimits. We claim that it is also conservative. Let $i$ be a map in $\S\otimes\X$ such that $\pi^*\otimes\id(i)$ is an isomorphism. Then $(\id\otimes N)\circ(\pi^*\otimes\id)(i)$ is an isomorphism. According to Proposition \ref{prop: N pi=pi N}, we see that $(\pi^*\otimes\id)\circ(\id\otimes N)(i)$ is an isomorphism. By Proposition \ref{prop:reflective localization}, the functor $\id\otimes N$ is conservative. Hence, it is enough to prove the conservativity of $\pi^*\otimes\id:\S^{\Xi\op}\to\cat{R}^{\Xi\op}$ which follows immediately from the conservativity of $\pi^*$.
\end{proof}

\section{Finite connected limit sketches}\label{sec:finite connected limit sketches}

\begin{defi} (\cite[1.49]{adamekrosicky})
A \textbf{limit sketch} is a couple $(\cat{T},L)$ where $\cat{T}$ is a small category and $L$ is a collection of cones in $\cat{T}$.
\end{defi}

Given a complete category $\cat{Y}$, a $\cat{Y}$-model for $(\cat{T},L)$ is a functor $F\colon \cat{T}\to \cat{Y}$ sending all cones in $L$ to limit cones in $\cat{Y}$. The full subcategory of $\cat{Y}^{\cat{T}}$ spanned by the $\cat{Y}$-models for $(\cat{T},L)$ will be denoted by $\model{\cat{T},L}_{\cat{Y}}$. The category $\model{\cat{T},L}_{\Set}$ is simply denoted $\model{\cat{T},L}$. It is a full reflective subcategory of $\Set^{\cat{T}}$  and it is therefore locally presentable. Conversely, it can be shown that any locally presentable category is equivalent to the category of models of a limit sketch (see \cite[Corollary 1.52]{adamekrosicky}).

For every locally presentable category $\cat{Y}$ the tensor product $\model{\cat{T},L}\otimes \cat{Y}$ is equivalent to $\model{\cat{T},L}_\cat{Y}$. In particular, for any locally presentable category $\cat{Y}$, the category $\model{\cat{T},L}_\cat{Y}$ is locally presentable.

\begin{defi}
A \textbf{finite connected limit sketch} is a sketch $(\cat{T},L)$ such that all cones in $L$ are indexed by finite connected diagrams. 
\end{defi}

The category $\model{\cat{T},L}$ of $\Set$-models for a finite connected sketch $(\cat{T},L)$ is locally finitely presentable. Moreover, it is closed under coproducts and filtered colimits in $\Set^{\cat{T}}$. In other words, the inclusion functor $i$ in the adjunction
\[
l\colon \Set^\cat{T}\rightleftarrows \model{\cat{T},L} \colon i
\] 
preserves coproducts and filtered colimits. More generally we have the following Lemma.

\begin{lemm}\label{lemm:connected limit topos}
Let $\S$ be a Grothendieck topos. In the adjunction
\begin{equation}\label{eq:Ssketch}
l\otimes \id \colon \Set^{\cat{T}}\otimes \S \rightleftarrows \model{\cat{T},L}\otimes \S \colon i\otimes \id,
\end{equation}
the right adjoint $i\otimes \id$ preserves coproducts and filtered colimits. 
\end{lemm}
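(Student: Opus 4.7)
The plan is to pass to concrete models of both sides of the tensor products, identify $i\otimes\id$ with the plain inclusion of $\S$-models into the functor category $\Fun(\cat{T},\S)$, and then deduce the lemma from the fact that colimits in $\Fun(\cat{T},\S)$ are computed pointwise together with two standard commutation properties of colimits in a Grothendieck topos.

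First, I would make the identification precise. Using $\Set^{\cat{T}}\otimes\S\simeq \Fun(\cat{T},\S)$ (a special case of the explicit description of the tensor product recalled in Section~2, exploiting that $\Set^{\cat{T}}$ is a free cocompletion) and $\model{\cat{T},L}\otimes\S\simeq \model{\cat{T},L}_\S$ (recalled in the paragraph preceding the statement), the adjunction $l\otimes\id\dashv i\otimes\id$ becomes the reflective adjunction given by the inclusion $\model{\cat{T},L}_\S\hookrightarrow \Fun(\cat{T},\S)$ together with its reflection. Indeed, by the universal property of the tensor product $l\otimes\id$ is the unique functor, cocontinuous in each variable, extending $(F,Z)\mapsto l(F)\otimes Z$ on $\Set^{\cat{T}}\times \S$, which a direct computation shows to agree with the reflection onto $\model{\cat{T},L}_\S$; by uniqueness of right adjoints $i\otimes\id$ is then the inclusion.

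Next, since colimits in $\Fun(\cat{T},\S)$ are computed pointwise in $\S$, showing that the inclusion preserves a class of colimits is the same as showing that $\model{\cat{T},L}_\S$ is closed in $\Fun(\cat{T},\S)$ under those colimits. For a cone in $L$ with finite connected indexing shape $D$, this closure is precisely the commutation of $D$-shaped limits in $\S$ with coproducts (for the first part of the lemma) and with filtered colimits (for the second). The commutation with filtered colimits holds in any locally presentable category, hence in the Grothendieck topos $\S$. The commutation with coproducts is the topos-theoretic input: a Grothendieck topos is infinitary lextensive, i.e., its small coproducts are disjoint and universal, and in such a category connected limits commute with arbitrary coproducts.

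The main obstacle, modest though it is, is the last commutation, which is the only step that really uses the hypothesis that the cones in $L$ have finite connected base. If a direct reference to the theory of (infinitary) extensive categories is not at hand, I would reduce a finite connected limit to iterated pullbacks (using that finite connected limits in a category with finite limits admit such an expression) and verify directly, from the universality and disjointness of coproducts in $\S$, that pullbacks distribute over small coproducts; the general statement then follows by iteration. Once both commutations are available, a pointwise coproduct and a pointwise filtered colimit of $\S$-models is itself an $\S$-model, which is exactly the conclusion that $i\otimes\id$ preserves coproducts and filtered colimits.
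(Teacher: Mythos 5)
Your proposal is correct and follows essentially the same route as the paper: the paper's proof likewise reduces the statement to showing that $\model{\cat{T},L}\otimes\S$ is closed under coproducts and filtered colimits as a subcategory of $\S^{\cat{T}}$, and then invokes the facts that in a Grothendieck topos finite limits commute with filtered colimits and connected limits commute with coproducts. Two minor cautions: the commutation of finite limits with filtered colimits should be credited to $\S$ being a Grothendieck topos (or at least locally \emph{finitely} presentable), not to local presentability in general, where it can fail; and your fallback reduction of finite connected limits to iterated pullbacks is incomplete (equalizers are finite connected limits whose standard expression via pullbacks passes through a product, which does not interact well with coproducts), so you should rely on the standard extensivity/disjoint-universal-coproducts argument rather than that reduction.
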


\begin{proof}
It is sufficient to prove that $\model{\cat{T},L}\otimes\S$ is closed under coproducts and filtered colimits as a subcategory of $\S^{\cat{T}}$.
This follows from the fact that in any Grothendieck topos finite limits commute with filtered colimits and connected limits commute with coproducts. 
\end{proof}

The rest of this section will be devoted to the proof of the following proposition.

\begin{prop}\label{prop: main prop finite connected limit sketch}
Let $\X$ be a locally presentable category. The following are equivalent.
\begin{enumerate}
\item There exists a small full subcategory $\Xi$ of $\X$ such that the pair $(\X,\Xi)$ satisfies the assumptions \ref{assumptions X}.
\item There exists a finite connected limit sketch $(\cat{T},L)$ and an equivalence of categories $\cat{X}\simeq \model{\cat{T},L}$.
\end{enumerate}
\end{prop}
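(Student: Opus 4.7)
I would prove the two directions separately.

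For direction (2) $\Rightarrow$ (1): Given a finite connected limit sketch $(\cat{T}, L)$ with reflection adjunction $l \colon \Set^{\cat{T}} \rightleftarrows \cat{X} \colon i$ where $\cat{X} := \model{\cat{T}, L}$, the plan is to set $\xi_t := l(\cat{T}(t,-))$ for each $t \in \cat{T}$ and let $\Xi$ be the full subcategory of $\cat{X}$ spanned by these objects. The Yoneda lemma together with the adjunction gives a natural isomorphism $\cat{X}(\xi_t, X) \cong X(t)$. Consequently the nerve $N \colon \cat{X} \to \Set^{\Xi\op}$, precomposed with the essentially surjective functor $\cat{T} \to \Xi\op$ induced by $t \mapsto \xi_t$, recovers the fully faithful inclusion $i$; density of $\Xi$ in $\cat{X}$ follows. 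Moreover each functor $\cat{X}(\xi_t,-) \cong \mathrm{ev}_t$ preserves finite coproducts and filtered colimits, since $i$ does so by Lemma \ref{lemm:connected limit topos} with $\S = \Set$, and evaluation at a fixed object of a presheaf category preserves all colimits.

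For direction (1) $\Rightarrow$ (2): Take $\cat{T} := \Xi\op$ and let $L$ consist of all cones $(\xi \to \xi_i)_{i \in I}$ in $\Xi\op$ — equivalently, cocones $(\xi_i \to \xi)_{i \in I}$ in $\Xi \subset \cat{X}$ — indexed by a finite connected category $I$ and exhibiting $\xi$ as $\colim_I \xi_i$ in $\cat{X}$. Because contravariant Hom turns colimits into limits, the nerve $N \colon \cat{X} \to \model{\cat{T}, L}$ is well-defined, and by density of $\Xi$ it remains fully faithful. The real content is essential surjectivity.

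Given $F \in \model{\cat{T}, L}$, the plan is to put $X := SF$ (with $S$ the left adjoint of $N$) and show that the unit $\eta_F \colon F \to NSF$ is an isomorphism. Writing $F = \colim_{(\xi, x) \in \int F} y(\xi)$ in $\Set^{\Xi\op}$ gives $SF = \colim_{\int F} \xi$ in $\cat{X}$. Decomposing the category of elements into connected components $\int F = \bigsqcup_\alpha (\int F)_\alpha$ produces matching decompositions $F = \bigsqcup_\alpha F_\alpha$ and $SF = \bigsqcup_\alpha SF_\alpha$; the coproduct preservation of each $\cat{X}(\xi_0,-)$ then splits the problem into one claim per connected component. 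Within a single component, one writes $(\int F)_\alpha$ as a filtered colimit of its finite connected subcategories and combines the filtered colimit preservation of $\cat{X}(\xi_0,-)$ with the sketch conditions encoded by $L$ to identify $NSF_\alpha(\xi_0)$ with $F_\alpha(\xi_0)$.

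The main obstacle is this last reduction: verifying that the finite connected limit cones collected in $L$ are rich enough to cut out precisely the essential image of $N$. Here I would lean on the fact that $\cat{X}$ is locally finitely presentable (Proposition \ref{prop: criterion lambda-presentability} applied to the fully faithful filtered-colimit-preserving right adjoint $N$), so that the Gabriel–Ulmer theorem already exhibits $\cat{X}$ as the models of some finite limit sketch, and then use the connectedness of every $\xi \in \Xi$ — a direct consequence of the coproduct-preservation hypothesis — to trim the disconnected cones out of that sketch and land inside the finite connected sketch $(\Xi\op, L)$ constructed above.
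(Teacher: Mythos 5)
Your direction $(2)\Rightarrow(1)$ is correct and is essentially the paper's argument: the paper also takes $\Xi$ to be the full image of $l\circ y(\cat{T}\op)$ in $\cat{X}$, gets density from the factorization of $i$ through the nerve, and gets compactness and connectedness from the fact that $i$ preserves filtered colimits and coproducts.

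The direction $(1)\Rightarrow(2)$ has a genuine gap, exactly at the point you flag as ``the main obstacle,'' and your proposed patch does not close it. The problem with taking $\cat{T}=\Xi\op$ and letting $L$ consist only of those finite connected colimit cocones whose vertex already lies in $\Xi$ is that these cones may carry no information at all: to prove that the connected components of $\Xi\downarrow F$ are filtered (which is what both your unit computation and the paper's argument ultimately need), one must produce, for a finite connected diagram $D$ in $\Xi\downarrow F$, a cocone vertex; the natural candidate is $d=\colim \pi D$ together with the element of $\lim_I F(\pi D(i))$ determined by $D$, but turning that element into a map $d\to F$ requires $F(d)\cong\lim_I F(\pi D(i))$, which the sketch axioms only guarantee when $d$ itself is an object of the sketch. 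Since $\Xi$ need not be closed under finite connected colimits in $\cat{X}$, this step fails. The paper's fix is to \emph{enlarge} $\Xi$ to $\Xicl$, its closure under finite connected colimits in $\cat{X}$ (still dense, still consisting of compact connected objects), and to take $L$ to be all finite connected limit cones in $\Xicl\op$; then $d\in\Xicl$ and the argument goes through. Your alternative patch --- start from the Gabriel--Ulmer finite limit sketch on $\cat{X}\op_{fp}$ and ``trim the disconnected cones'' --- is not a valid move: deleting cones from a sketch strictly enlarges its category of models (a functor preserving only the finite connected limits of $\cat{X}\op_{fp}$ need not preserve finite products or the terminal object, so in general you get something bigger than $\cat{X}$), and there is in any case no reason the connected cones of that sketch would have their vertices in $\Xi\op$. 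So the missing idea is precisely the passage to the closure $\Xicl$; with it, the rest of your outline (coproduct decomposition of $\int F$ into connected components, then filteredness of each component) matches the paper's proof.
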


We first prove that (1) implies (2) in Proposition \ref{prop: main prop finite connected limit sketch}. We consider a cocomplete category $\X$ with a dense full subcategory $\Xi$ of compact and connected objects. We define $\Xicl$ to be the closure of $\Xi$ under finite connected colimits in $\X$. Then $\Xicl$ is dense and spanned by connected and compact objects; thus the nerve functor $\widetilde{N}\colon \X \to \Set^{\Xicl\op}$ is fully faithful. Let $L$ be the set of representatives of all the finite connected limit cones in $\Xicl\op$, then we have the following result.

\begin{lemm}
The essential image of $\widetilde{N}$ is the category of $(\Xicl\op,L)$-models.
\end{lemm}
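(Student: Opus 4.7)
The plan is to prove the two inclusions separately. The inclusion ``essential image $\subseteq$ models'' is a direct application of the Yoneda lemma: since $\Xicl$ is closed in $\X$ under finite connected colimits by construction, any finite connected limit cone in $\Xicl\op$ corresponds to a finite connected colimit cone in $\Xicl$ that is also a colimit cone in $\X$, and the representable $\X(-,X)$ sends it to a limit cone in $\Set$.

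For the reverse inclusion, I would start with a model $F \colon \Xicl\op \to \Set$, set $X := \widetilde{S}(F)$, where $\widetilde{S} \colon \Set^{\Xicl\op} \to \X$ is the left adjoint of $\widetilde{N}$ (obtained by left Kan extension of $\Xicl \hookrightarrow \X$ along Yoneda), and aim to show that the unit $\eta_F \colon F \to \widetilde{N}(X)$ is an isomorphism. Since $\widetilde{N}(X)$ is itself a model by the first inclusion and every $\eta \in \Xicl$ can be written as a finite connected colimit of objects of $\Xi$, it will suffice to verify that $\eta_F$ is an isomorphism on each $\xi \in \Xi$: the extension to all of $\Xicl$ then follows from the model property applied to both $F$ and $\widetilde{N}(X)$.

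The main obstacle is to show that each connected component of the category of elements $\int F$ is filtered. The key observation is that any finite connected diagram $D \colon I \to \int F$ admits a cocone in $\int F$: the underlying diagram in $\Xicl$ obtained by composing $D$ with the projection $(\xi', s) \mapsto \xi'$ has a colimit $\xi'_0 \in \Xicl$ by closure under finite connected colimits, and the model property of $F$ provides a canonical element $s_0 \in F(\xi'_0) \cong \lim_I F(D(i))$ making $(\xi'_0, s_0) \in \int F$ into a cocone.

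Once filteredness of each connected component $C_\alpha$ of $\int F$ is established, one can decompose $\widetilde{S}(F) \cong \coprod_\alpha \colim_{(\xi', s) \in C_\alpha} \xi'$ in $\X$, and for $\xi \in \Xi$ compute, using the connectedness (preservation of finite coproducts) and compactness (preservation of filtered colimits) of $\xi$ provided by Assumptions \ref{assumptions X},
\[
\X(\xi, \widetilde{S}(F)) \cong \coprod_\alpha \colim_{(\xi', s) \in C_\alpha} \X(\xi, \xi') \cong \colim_{(\xi', s) \in \int F} \X(\xi, \xi') \cong F(\xi),
\]
where the last isomorphism is the standard presentation of a presheaf as a colimit of representables evaluated at $\xi$. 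A routine check will confirm that this chain realizes the unit map $\eta_F$ at $\xi$, completing the argument.
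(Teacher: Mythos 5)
Your proof is correct and follows essentially the same route as the paper: both arguments reduce to showing that the connected components of the category of elements of $F$ are filtered, using the closure of $\Xicl$ under finite connected colimits together with the model property of $F$ to produce cocone vertices over finite connected diagrams. The only cosmetic difference is in how the conclusion is assembled: the paper observes that the essential image of $\widetilde{N}$ is closed under coproducts and filtered colimits and contains the representables, whereas you unwind the same fact into an explicit computation of the unit $F\to\widetilde{N}\widetilde{S}(F)$ on objects of $\Xi$ and then extend to all of $\Xicl$ via the model property.
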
 

\begin{proof}
First notice that for every $X\in \X$ the functor $\widetilde{N}(X)$ sends all cones in $L$ to limit cones; indeed if $D\colon I \to \Xicl$ is a finite connected diagram in $\Xicl$ then
\[\widetilde{N}(X)(\colim_{i\in I} D(i))\cong \X(\colim_{i\in I} D(i),X)\cong \lim_{i\in I} \X(D(i),X)\cong \lim_{i\in I} \widetilde{N}(X)(D(i)).\]
Thus it is sufficient to prove that every functor $F \colon \Xicl\op \to \Set$ that preserves finite connected limits is in the essential image of $\widetilde{N}$. The essential image of $\widetilde{N}$ is closed under filtered colimits and coproducts; since the representables are all contained in the image of $\widetilde{N}$, it is sufficient to show that $F$ is a coproduct of filtered colimits of representables. Since the category of representables is dense in $\Set^{\Xicl\op}$, it is sufficient to show that the connected components of the comma category $\Xicl \downarrow F$ are filtered.

This amounts to showing that, for every finite connected category $I$, every diagram $D\colon I \to \Xicl \downarrow F$ is the base of a cocone in $\Xicl \downarrow F$. Let $\pi\colon \Xicl \downarrow F \to \Xicl$ be the canonical projection. By Yoneda lemma, giving the diagram $D$ is equivalent to giving a system of elements $\{x_i\in F(\pi D(i))\}_{i\in I}$ such that $F(f)(x_j)=x_i$ for every $f\colon i \to j$ in $I$. In other words, the data of the diagram $D$ is exactly the data of an element \[(x_i)_{i\in I} \in \underset{i\in I}\lim F(\pi D(i)).\] Let $d\in \Xicl$ be the colimit of $\pi D$. By assumption 
\[F(d)\cong \underset{i\in I}\lim F(\pi D(i));\] 
the $(x_i)_{i\in I}$ determines an arrow $d \to F$ in $\Set^{\Xicl\op}$ which, seen as an object of  $\Xicl \downarrow F$, is a vertex for a cocone over $D$.   		
\end{proof}

\begin{coro}\label{coro:coproducts pres topos}
For every Grothendieck topos $\S$, the nerve functor $N\otimes \id\colon \X\otimes \S \to \S^{\Xi\op}$ preserves filtered colimits and coproducts.
\end{coro}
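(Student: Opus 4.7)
My plan is to reduce the statement to Lemma \ref{lemm:connected limit topos} by exploiting the presentation of $\X$ as models for the finite connected limit sketch $(\Xicl\op,L)$ established in the lemma immediately above. The preceding lemma identifies the essential image of $\widetilde{N}\colon\X\to\Set^{\Xicl\op}$ with $\model{\Xicl\op,L}$, so that $\X\simeq\model{\Xicl\op,L}$. Tensoring this equivalence with $\S$ yields an equivalence $\X\otimes\S\simeq\model{\Xicl\op,L}\otimes\S$, under which the functor $\widetilde{N}\otimes\id\colon\X\otimes\S\to\S^{\Xicl\op}$ corresponds to the inclusion $i\otimes\id$ of Lemma \ref{lemm:connected limit topos}. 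That lemma therefore tells us immediately that $\widetilde{N}\otimes\id$ preserves coproducts and filtered colimits.

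Next I would bridge from $\widetilde{N}$ to $N$. Since $\Xi$ is a full subcategory of $\Xicl$, precomposition with the inclusion $\Xi\op\hookrightarrow\Xicl\op$ gives a restriction functor $r\colon\S^{\Xicl\op}\to\S^{\Xi\op}$. Being a restriction along a functor of small categories, $r$ preserves all small limits and colimits. Moreover, by the very definition of the two nerves one has $N(X)=\widetilde{N}(X)|_\Xi=r(\widetilde{N}(X))$ for every $X\in\X$; tensoring this identity with $\S$, using Lemma \ref{lemm:right-left induced} and Corollary \ref{coro:induced functor} to recognize both $N\otimes\id$ and $\widetilde{N}\otimes\id$ as appropriate precomposition functors, one obtains the equality $N\otimes\id=r\circ(\widetilde{N}\otimes\id)$ of functors $\X\otimes\S\to\S^{\Xi\op}$.

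The corollary then follows, since composing a functor that preserves filtered colimits and coproducts with one that preserves all colimits produces a functor with the same preservation properties. The only step requiring more than routine verification is the identification $N\otimes\id=r\circ(\widetilde{N}\otimes\id)$ at the tensored level: one has to check that the passage from the pointwise identity $N=r\circ\widetilde{N}$ on $\X$ to its tensored analogue is compatible with the universal property of the tensor product. I expect this to be the main obstacle, but it is resolved cleanly by rewriting both sides via the precomposition descriptions of Lemma \ref{lemm:right-left induced} and Corollary \ref{coro:induced functor}, after which everything becomes formal.
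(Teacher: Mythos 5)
Your proof is correct and follows essentially the same route as the paper: both factor $N\otimes\id$ as the restriction functor along $\Xi\op\hookrightarrow\Xicl\op$ (a left adjoint, hence colimit-preserving) composed with $\widetilde{N}\otimes\id$, and both invoke Lemma \ref{lemm:connected limit topos} to see that the latter preserves coproducts and filtered colimits. The extra care you take in justifying the tensored identification $N\otimes\id\cong l^*\circ(\widetilde{N}\otimes\id)$ is the only difference in presentation.
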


\begin{proof}
The functor $N\otimes \id$ is isomorphic to the composite
\[\X\otimes \S 
 	          \xrightarrow{\widetilde{N}\otimes \id}\S^{\Xicl\op}
 	          \xrightarrow{l^*}\S^{\Xi\op},   
\]
where $l \colon \Xi\op \to \Xicl\op$ is the canonical inclusion. The functor $\widetilde{N}\otimes \id$ preserves coproducts and filtered colimits by Lemma \ref{lemm:connected limit topos}. The functor $l^*$ is left adjoint, hence it preserves all colimits.
\end{proof}

Now, we want to prove that (2) implies (1) in Proposition \ref{prop: main prop finite connected limit sketch}. We start with a definition.

\begin{defi}\label{defi: fully faithful}
Let $(\cat{T},L)$ be a limit sketch, we say that it is \textbf{fully faithful} if for any $t\in \cat{T}$, the functor $\cat{T}(t,-)$ is a model for $(\cat{T},L)$.
\end{defi}

Let $(\cat{T},L)$ be a finite connected limit sketch and let $\cat{X}$ be the category of its $\Set$-models. The inclusion $i\colon \cat{X} \to \Set^{\cat{T}}$ has a left adjoint $l\colon \Set^{\cat{T}} \to \cat{X}$. 

\begin{prop}\label{prop: fully faithful limit sketch}
Assume that $(\cat{T},L)$ is fully faithful. Then the category $\cat{T}\op$ seen as a full subcategory of $\cat{X}$ is a dense subcategory whose objects are compact and connected.
\end{prop}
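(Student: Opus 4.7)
The plan is to reduce all three properties (density, compactness, connectedness) to the Yoneda isomorphism $\cat{X}(\cat{T}(t,-), X) \cong X(t)$ together with the fact that the inclusion $i \colon \cat{X} \to \Set^{\cat{T}}$ creates both filtered colimits and finite coproducts, which is Lemma \ref{lemm:connected limit topos} applied with $\S = \Set$.

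First, I would observe that Definition \ref{defi: fully faithful} is precisely the statement that $t \mapsto \cat{T}(t,-)$ defines a functor $\cat{T}\op \to \cat{X}$; its full faithfulness is the Yoneda lemma in $\Set^{\cat{T}}$. Hence we can speak unambiguously of $\cat{T}\op$ as a full subcategory of $\cat{X}$.

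For density, I would unfold the nerve functor $\cat{X} \to \Set^{\cat{T}}$ associated to the inclusion $\cat{T}\op \hookrightarrow \cat{X}$: it sends $X$ to $t \mapsto \cat{X}(\cat{T}(t,-), X)$, which by Yoneda in $\Set^{\cat{T}}$ is naturally isomorphic to $iX$. Since $\cat{X}$ is a reflective subcategory of $\Set^{\cat{T}}$, the functor $i$ is fully faithful, and hence so is the nerve, giving density.

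For compactness and connectedness, both reduce to showing that $\cat{X}(\cat{T}(t,-),-)$ preserves the appropriate colimits. By the same Yoneda identification this functor is isomorphic to $\mathrm{ev}_t \circ i$. Lemma \ref{lemm:connected limit topos} with $\S = \Set$ implies that $i$ preserves filtered colimits and finite coproducts, while evaluation $\mathrm{ev}_t \colon \Set^{\cat{T}} \to \Set$ preserves all colimits as a left adjoint. Composing yields preservation of filtered colimits (compactness) and of finite coproducts (connectedness). I do not anticipate a genuine obstacle: the argument is essentially three applications of Yoneda combined with the colimit preservation properties of $i$ already established in Lemma \ref{lemm:connected limit topos}.
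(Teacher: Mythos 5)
Your proposal is correct and follows essentially the same route as the paper: the paper also reduces compactness and connectedness to the isomorphism $\cat{X}(h_t,-)\cong\Set^{\cat{T}}(h_t,i(-))$ together with the fact that $i$ preserves filtered colimits and coproducts (the $\S=\Set$ case of Lemma \ref{lemm:connected limit topos}), and treats density as immediate, which your Yoneda identification of the nerve with $i$ just makes explicit.
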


\begin{proof}
The density of the inclusion $\cat{T}\op\to\cat{X}$ is obvious. Let $t$ in $\cat{T}$, we want to show that $h_t=\cat{T}(t,-)$ is connected and compact in $\cat{X}$. Let $F:D\to\cat{X}$ be a diagram, then we have
\[\cat{X}(h_t,\colim_{d\in D}F(d))\cong\Set^{\cat{T}}(h_t,i(\colim_{d\in D}F(d))).\]

Hence, if $D$ is a filtered category or a discrete category, we have
\[\cat{X}(h_t,\colim_{d\in D}F(d))\cong\Set^{\cat{T}}(h_t,\colim_{d\in D}i(F(d)))\cong \colim_{d\in D}\Set^{\cat{T}}(h_t,i(F(d))),\]
where the second equality follows from the fact that colimits are computed objectwise in a presheaf category. It follows that the object $h_t\in \cat{X}$ is compact and connected.
\end{proof}

Now, let $(\cat{T},L)$ be a general finite connected limit sketch. Let $\cat{X}$ be the category of its $\Set$-models. Let $\cat{A}=l\circ y(\cat{T}\op)$ be the image of $l$ composed with the Yoneda embedding. The category $\cat{A}$ is dense in $\cat{X}$, moreover, the objects of $\cat{A}$ are compact and connected in $\X$ since $i$ preserves filtered colimits and coproducts. Let $\theta\colon \cat{T} \to \cat{A}\op$ be the map induced by restricting $(l\circ y)\op$. There is an induced adjunction
\[
\xymatrix{\Set^{\cat{T}}\ar@<-3pt>[r]_-{\theta_*} & \ar@<-3pt>[l]_-{\theta^*} \Set^{\cat{A}\op}.} 
\]
We can then consider the finite connected limit sketch $(\cat{A}\op,\theta(L))$. The following proposition shows that it is fully faithful and that its category of models is equivalent to $\cat{X}$ which according to Proposition \ref{prop: fully faithful limit sketch} will conclude the proof of Proposition \ref{prop: main prop finite connected limit sketch}.

\begin{prop}\label{prop:full subcategory sketch} 
The adjunction $(\theta^*,
\theta_*)$ restricts to an equivalence of categories
\[
\xymatrix{\model{\cat{T},L}_{\Set}\ar@<-3pt>[r]_-{\theta_*} & \ar@<-3pt>[l]_-{\theta^*} \model{\cat{A}\op,\theta(L)}_{\Set}}.
\]	
\end{prop}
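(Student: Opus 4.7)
My plan is to identify the restriction of $\theta_*$ to $\cat{X}$ with the nerve functor $\widetilde{N}\colon \cat{X}\to \Set^{\cat{A}\op}$ arising from the density of $\cat{A}$ in $\cat{X}$, and then to check that the unit and counit of $\theta^*\dashv \theta_*$ become isomorphisms on the appropriate subcategories of models.

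First, I observe that $\theta^*$ restricts to models: if $F\colon \cat{A}\op\to \Set$ sends every cone in $\theta(L)$ to a limit cone then for each $c\in L$ the cone $(\theta^*F)(c)=F(\theta(c))$ is a limit cone, so $\theta^*F$ is a model for $(\cat{T},L)$ and hence lies in $\cat{X}$.

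The central computation is the identification $\theta^* h_a \cong ia$ in $\Set^{\cat{T}}$ for every $a\in\cat{A}$, where $h_a=\cat{A}(-,a)$ is the representable presheaf. This follows by chaining the fullness of $\cat{A}\hookrightarrow\cat{X}$, the adjunction $l\dashv i$, and the Yoneda lemma:
\[ \theta^*h_a(t)=\cat{A}(ly(t),a)\cong\cat{X}(ly(t),a)\cong\Set^{\cat{T}}(y(t),ia)\cong ia(t). \]
Combining this with $\theta^*\dashv\theta_*$ and the full faithfulness of $i$, I get for $Y\in\cat{X}$ and any $a\in\cat{A}$ a chain of natural isomorphisms
\[ (\theta_*iY)(a)\cong\Set^{\cat{A}\op}(h_a,\theta_* iY)\cong\Set^{\cat{T}}(\theta^*h_a,iY)\cong\Set^{\cat{T}}(ia,iY)\cong\cat{X}(a,Y)=\widetilde{N}(Y)(a), \]
so $\theta_* i\cong \widetilde{N}$ as functors $\cat{X}\to \Set^{\cat{A}\op}$. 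In particular, $\theta_*$ also restricts: pulling $\widetilde{N}(Y)$ back through $\theta$ recovers $iY$, a model for $(\cat{T},L)$, hence $\widetilde{N}(Y)(\theta(c))\cong iY(c)$ is a limit cone for every $c\in L$.

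It remains to show the unit and counit become isomorphisms. Via the identification $\theta_*i\cong\widetilde{N}$, the counit $\theta^*\theta_*iY\to iY$ is the map $\theta^*\widetilde{N}(Y)\to iY$, which is the natural isomorphism $iY\cong iY$ read off from the computation above. For the unit $\eta_F\colon F\to \theta_*\theta^*F$ with $F\in\model{\cat{A}\op,\theta(L)}$, I evaluate at $a=ly(t)$ and trace the chain
\[ F(a)\stackrel{\eta_F(a)}{\longrightarrow}(\theta_*\theta^*F)(a)\cong\widetilde{N}(\theta^*F)(a)\cong\cat{X}(a,\theta^*F)\cong(\theta^*F)(t)=F(a). \]
A Yoneda/adjunction bookkeeping shows this composite is the identity, so the unit is an objectwise isomorphism on $\cat{A}\op$. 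The main obstacle I expect is precisely this last verification, which reduces to checking that, under the identification $\theta^*h_{ly(t)}\cong ily(t)$, the unit of $l\dashv i$ at $y(t)$ corresponds to the identity of $ly(t)$ in $\cat{X}$; this is a direct consequence of how the adjunction bijection for $l\dashv i$ is defined.
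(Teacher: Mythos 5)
Your proof is correct and follows essentially the same route as the paper's: you compute $\theta_*$ on models as the nerve of the dense subcategory $\cat{A}$ (the paper does this via the colimit decomposition $i(a)\cong\colim h_t$ and the Kan-extension limit formula, you via $\theta^*h_a\cong i(a)$ and the adjunction, which amounts to the same thing), and then check the unit and counit are isomorphisms by evaluating at objects $\theta(t)$. The residual ``bookkeeping'' you flag for the unit is handled most cleanly by the triangle identity $\varepsilon_{\theta^*F}\circ\theta^*\eta_F=\id$ together with the already-established invertibility of the counit on models; this is the same level of detail at which the paper itself operates.
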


\begin{proof}
For every $t\in \cat{T}$, we denote the representable functor $\cat{T}(t,-)$ by $h_t$. For every $a\in \cat{A}$ the functor $i(a)\in \Set^{\cat{T}}$ is isomorphic to 
\[\underset{\{t\to i(a)\}\in \cat{T}\op\downarrow i(a)}\colim \hspace{-15pt} h_t,\] 
and, by adjunction, the indexing category $\cat{T}\op\downarrow i(a)$ is isomorphic to $\theta\op\downarrow a$.
 
Suppose that $G\in \Set^{\cat{T}}$ belongs to $\model{\cat{T},L}_{\Set}$. 
For every $a\in \cat{A}\op$, $\theta_*(G)$, the right Kan extension of $G$ along $\theta$, has $a$-component 
\[\theta_*G(a)\cong \underset{\{a\to \theta(t)\}\in a\downarrow \theta}\lim\hspace{-5pt}  G(t)\cong \Set^{\cat{T}} ( \underset{\{\theta(t)\to a\}\in \theta\op\downarrow a} \colim\hspace{-7pt} h_t,G)\cong \Set^{\cat{T}}(i(a),G)\cong \model{\cat{T},L}(a,G).\]
It follows that for every $t\in \cat{T}$, we have
\[
 \theta^*\theta_*G(t)\cong \model{\cat{T},L}(l(t),G)\cong \Set^{\cat{T}}(y(t),G)\cong G(t).
\]
This implies that the counit $\varepsilon \colon \theta^*\theta_*(G) \to G$ is an isomorphism. Since, by assumption, $G$ belongs to $\model{\cat{T},L}_{\Set}$, this also implies that $\theta_*G$ belongs to $\model{\cat{A},\theta(L)}_{\Set}$.

Note that for every $a\in \cat{A}$ the representable functor $h_a\in \Set^{\cat{A}\op}$ is a model for $(\cat{A},\theta(L))$. 
Conversely if $F\in \Set^{\cat{A}\op}$ is a model for $(\cat{A},\theta(L))$, then $\theta^*F$ is a model for  $(\cat{T},L)$; furthermore the unit $\eta \colon F \to \theta_*\theta^*(F)$ is an isomorphism in fact, for every $a\in \cat{A}\op$
\[
 F(a)\cong \Set^{\cat{T}}(i(a),\theta^*(F))\cong \Set^{\cat{T}}(\underset{\{\theta(t)\to a\}\in \theta\op \downarrow a}\colim  h_t,\theta^*F)\cong \theta_*\theta^*F(a).
\]
\end{proof}

\section{Construction of the model structure}

In all this section, we fix a pair $(\X,\Xi)$ satisfying assumptions \ref{assumptions X}. For $\cat{Y}$ and $\cat{Z}$ two locally presentable categories, we denote by $(Y,Z)\mapsto Y\otimes Z$ the canonical functor $\cat{Y}\times \cat{Z}\to \cat{Y}\otimes \cat{Z}$. 
  
\begin{lemm}\label{lemma:key lemma}
Let $\S$ be a Grothendieck topos. Let $C$ be an object in $\X$ and let $i:K \to L$ is a monomorphism in $\S$. If 
\[\xymatrix{
K\otimes C\ar[d]\ar[r]&X\ar[d]\\
L\otimes C\ar[r]&Y
}
\]
is a pushout diagram in $\S\otimes\X$, then its nerve is a pushout diagram in $\S^{\Xi\op}\cong \S\otimes\Set^{\Xi\op}$.
\end{lemm}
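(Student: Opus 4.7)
The plan is to produce the pushout in $\S^{\Xi\op}$ pointwise and to show that it already lies in the reflective subcategory $\S\otimes\X \hookrightarrow \S^{\Xi\op}$, so that the pushout computed in $\S\otimes\X$ maps isomorphically onto it.

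First I would use the factorization of the nerve from Corollary~\ref{coro:coproducts pres topos}: $\id\otimes N = l^*\circ(\id\otimes\widetilde{N})$, where $\id\otimes\widetilde{N}\colon \S\otimes\X\to\S^{\Xicl\op}$ is the fully faithful inclusion whose image is the category of models of the finite connected limit sketch on $\Xicl\op$ (cf.\ Proposition~\ref{prop: main prop finite connected limit sketch}), and $l^*\colon \S^{\Xicl\op}\to\S^{\Xi\op}$ is precomposition along $\Xi\hookrightarrow\Xicl$, which is cocontinuous. Since $l^*$ preserves pushouts, it is enough to prove the statement with $\widetilde{N}$ in place of $N$. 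Equivalently, one must show that the pointwise pushout $\bar{Y}\in\S^{\Xicl\op}$ of the diagram of nerves is a model of the sketch: in that case, by full faithfulness and the adjunction, $\bar{Y}$ coincides with $(\id\otimes\widetilde{N})(Y)$.

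Using the identification $\bigsqcup_S K\cong \Delta(S)\times K$ in the topos $\S$ (where $\Delta\colon \Set\to\S$ is the inverse image of the unique geometric morphism), the pointwise pushout is
\[
\bar{Y}(\widetilde{\xi})=\bigl(\Delta(\widetilde{N}C(\widetilde{\xi}))\times L\bigr)\sqcup_{\Delta(\widetilde{N}C(\widetilde{\xi}))\times K}\widetilde{N}X(\widetilde{\xi}).
\]
Showing $\bar{Y}$ is a model reduces to verifying, for every finite connected colimit cocone $c=\colim_i D(i)$ in $\Xicl$, that the canonical map $\bar{Y}(c)\to\lim_i\bar{Y}(D(i))$ in $\S$ is an isomorphism. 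The input data already commutes with $\lim_i$: $\widetilde{N}C$ and $\widetilde{N}X$ are models of the sketch, $\Delta$ preserves finite limits, and the product by a fixed object commutes with finite connected limits.

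The main obstacle is the remaining topos-theoretic point: that finite connected limits in $\S$ commute with the pushouts $(A_i\times L)\sqcup_{A_i\times K}X_i$ themselves, for $A_i=\Delta(\widetilde{N}C(D(i)))$ and fixed monomorphism $K\hookrightarrow L$. This is expected to follow from the van Kampen property of pushouts along monomorphisms in a Grothendieck topos---such squares are bi-Cartesian and stable under pullback---together with the commutativity of finite connected limits with finite coproducts used already in Lemma~\ref{lemm:connected limit topos}. The delicate part is to leverage the specific product-form of the mono $A_i\times K\hookrightarrow A_i\times L$ in order to express $\lim_i\bar{Y}(D(i))$ as a pushout along the limit mono $\Delta(\widetilde{N}C(c))\times K\hookrightarrow\Delta(\widetilde{N}C(c))\times L$; a naive limit-commutes-with-pushout statement fails already in $\Set$, so the argument must use the correlated product structure imposed by $C$.
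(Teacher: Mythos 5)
Your reduction is sound as far as it goes: factoring the nerve through $\S^{\Xicl\op}$, computing the pushout pointwise there, and observing that it suffices to show the pointwise pushout $\bar{Y}(\xi)=(\Delta(NC(\xi))\times L)\sqcup_{\Delta(NC(\xi))\times K}NX(\xi)$ is again a model of the finite connected limit sketch is a correct reformulation of the lemma. But the step you defer to the end is not a technical remainder --- it \emph{is} the lemma, and the van Kampen/adhesivity property you invoke does not deliver it. Adhesivity tells you each $Y_i$ is the union of the subobjects $X_i$ and $A_i\times L$ with intersection $A_i\times K$, so your claim amounts to ``finite connected limits commute with unions of subobjects.'' That is false in general, even in $\Set$: take the cospan $0\to 2\leftarrow 1$ with $Y_0=Y_1=Y_2=\{a,b\}$ and identity structure maps, $X_\bullet=(\{a\},\{b\},\{a,b\})$, $Z_\bullet=(\{b\},\{a\},\{a,b\})$; then $\lim(X\cup Z)=\{a,b\}$ while $(\lim X)\cup(\lim Z)=\varnothing$. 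So, as you yourself note, everything hinges on the correlated form of the monos $A_i\times K\hookrightarrow A_i\times L$ (same $K\hookrightarrow L$ throughout), and you give no argument that exploits it. In $\Set$ one exploits it by choosing a complement $L=K\sqcup K^c$, which turns the pushout into the coproduct $X_i\sqcup(A_i\times K^c)$ and reduces the claim to the commutation of finite connected limits with coproducts; in a general Grothendieck topos no such complement exists, and nothing in your outline substitutes for it.

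The paper closes exactly this gap with Barr's covering theorem: there is a surjective geometric morphism $\pi\colon\widetilde{\S}\to\S$ from a Boolean topos, the functor $\pi^*\otimes\id$ creates colimits (Lemma \ref{lemm:pi^*creates colimts}), commutes with the nerve (Proposition \ref{prop: N pi=pi N}), and preserves monomorphisms (Remark \ref{rem:monomorphisms}), so the statement descends to the case where $\S$ is Boolean. There the monomorphism splits, $Y=X\sqcup((L-K)\otimes C)$, and one concludes because the nerve preserves coproducts (Corollary \ref{coro:coproducts pres topos}). If you want to complete your argument, you should either import this Boolean-cover step or find another descent argument that genuinely uses the product structure of the monos; as written, the proof is missing its central idea.
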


\begin{proof}
Let us denote by $\square$ the above square. By \cite[Theorem 7.54] {johnstonetopos} there exists $\widetilde{\S}$ a Boolean topos with a surjective geometric morphism $\pi\colon \widetilde{\S} \to \S$. Recall from Proposition \ref{prop: N pi=pi N} that the square
\[
\xymatrix{
\S\otimes\X\ar[d]_{\pi^*\otimes\id}\ar[r]^-{\id\otimes N}&\S\otimes\Set^{\Xi\op}\ar[d]^{\pi^*\otimes\id}\\
\widetilde{\S}\otimes\X\ar[r]_-{\id\otimes N}&\widetilde{\S}^{\Xi\op}
}
\]
commutes up to natural isomorphism. Moreover, the vertical maps create colimits. 

The square $\square$ lives in the upper left corner. We want to prove that $\id\otimes N(\square)$ is a pushout square. By \ref{lemm:pi^*creates colimts}, we know that $\pi^*\otimes\id$ creates colimits. Hence $\id\otimes N(\square)$ is a pushout if and only if $(\pi^*\otimes\id)\circ(\id\otimes N)(\square)$ is a pushout square. By commutativity of the above diagram,  $(\id\otimes N)(\square)$ is a pushout if and only if $(\id\otimes N)\circ(\pi^*\otimes\id)(\square)$ is a pushout. We know by assumption that $\square$ is a pushout, therefore, since $\pi^*\otimes\id$ is a left adjoint, $(\pi^*\otimes\id)(\square)$ is a pushout. Hence, we want to prove that $(\id\otimes N)(\square)$ is a pushout knowing that $(\id\otimes N)(\pi^*\otimes\id)(\square)$ is a pushout. In other words, we are reduced to prove the lemma in the case where $\S=\widetilde{\S}$ is a Boolean topos.

Using Remark \ref{rem:monomorphisms}, we are reduced to proving that if $K\to L$ is a monomorphism in $\widetilde{\S}$, and the square
\[\xymatrix{
K\otimes C\ar[r]\ar[d]&X\ar[d]\\
L\otimes C\ar[r]&Y
}
\]
is  a pushout square in $\widetilde{\S}\otimes \X$, then the square
\[\xymatrix{
K\otimes N(C)\ar[r]\ar[d]&N(X)\ar[d]\\
L\otimes N(C)\ar[r]&N(Y)
}
\]
is  a pushout square in $\widetilde{\S}^{\Xi\op}$. Since $\widetilde{\S}$ is Boolean, we can write $L=K\sqcup (L-K)$ where $L-K$ denotes a complement of $K$. Hence, we know that $Y=(L-K)\otimes C\sqcup X$. Since $N$ commutes with coproducts (Corollary \ref{coro:coproducts pres topos}), we have
\[N(Y)=N(X)\sqcup N((L-K)\otimes C)\cong L\otimes N(C)\sqcup^{K\otimes N(C)} N(X)\]
as desired.
\end{proof}

Now, $\S$ denotes a Grothendieck topos possessing a model structure satisfying the assumptions \ref{assumption S}. We let $I$ (resp. $J$) be a set of generating (trivial) cofibrations for $\S$. The category $\S^{\Xi\op}$ can then be equipped with a projective model structure in which the weak equivalences (resp. fibrations) are the maps that are objectwise weak equivalences (resp. fibrations) in $\S$. We denote by $y:\Xi\to\Set^{\Xi\op}$ the Yoneda embedding. Then a set of generating cofibrations (resp. trivial cofibrations) for the projective model structure on $\S^{\Xi\op}\cong \S\otimes \Set^{\Xi\op}$ is
\[
\mathscr{I}=\{ i\otimes y(\xi) \mid i\in I, \ \xi \in \Xi \} \ \text{(resp. } \mathscr{J}=\{ j\otimes y(\xi) \mid j\in J, \xi \in \Xi \} \text{ ).}
\]

We then have the following proposition.

\begin{prop}\label{cor.key}
The functor $\id \otimes N:\S\otimes \X\to \S^{\Xi\op}$ sends pushouts of maps in $(\id\otimes S)(\mathscr{I})$ (resp. $(\id\otimes S)(\mathscr{J})$) to $\mathscr{I}$-cofibrations (resp. $\mathscr{J}$-cofibrations). 
\end{prop}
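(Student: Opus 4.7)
The plan is to reduce the statement directly to Lemma \ref{lemma:key lemma}. First I would unpack the set $(\id \otimes S)(\mathscr{I})$. By construction $S$ is the left Kan extension of the inclusion $\Xi \hookrightarrow \X$ along $y$, so $S\circ y$ is the inclusion and a typical element of $(\id\otimes S)(\mathscr{I})$ is a map of the form $i\otimes \xi\colon K\otimes \xi \to L\otimes \xi$ with $i\colon K\to L$ in $I$ and $\xi \in \Xi$ viewed as an object of $\X$. The analogous description holds for $(\id\otimes S)(\mathscr{J})$.

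Next, given a pushout square in $\S\otimes\X$
\[\xymatrix{
K\otimes \xi\ar[d]_{i\otimes \xi}\ar[r]&X\ar[d]\\
L\otimes \xi\ar[r]&Y,
}\]
I would observe that by the assumption \ref{assumption S} on $\S$ every cofibration (in particular every element of $I$ or $J$) is a monomorphism, so Lemma \ref{lemma:key lemma} applies with $C=\xi$. It tells us that applying $\id\otimes N$ to the square produces a pushout square in $\S^{\Xi\op}\cong \S\otimes\Set^{\Xi\op}$.

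Then I would identify the image of the left-hand map. Because $\Xi$ is a full subcategory of $\X$, for any $\xi\in\Xi$ the nerve $N(\xi)$ is the representable presheaf $y(\xi)$, and since the tensor product in locally presentable categories is functorial we have $N(i\otimes\xi)\cong i\otimes y(\xi)$. The resulting pushout square in $\S^{\Xi\op}$ is therefore
\[\xymatrix{
K\otimes y(\xi)\ar[d]_{i\otimes y(\xi)}\ar[r]&N(X)\ar[d]\\
L\otimes y(\xi)\ar[r]&N(Y),
}\]
whose left-hand vertical map is by definition a generator in $\mathscr{I}$ (respectively $\mathscr{J}$ if we had started with $j\in J$). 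Thus $N(X)\to N(Y)$ is a pushout of a generator, hence an $\mathscr{I}$-cofibration (respectively $\mathscr{J}$-cofibration), as desired.

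The only potentially subtle point here is the appeal to Lemma \ref{lemma:key lemma}, since it is where the monomorphism hypothesis and the Boolean reduction of the topos $\S$ really get used; once that lemma is invoked, the rest is bookkeeping about the adjunction $(S,N)$ and the identification $N|_\Xi = y$. No additional combinatorial verification is needed because the class of $\mathscr{I}$-cofibrations (in the sense of cell complexes) is automatically closed under pushouts of its generators.
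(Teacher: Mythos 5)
Your proposal is correct and follows essentially the same route as the paper's proof: identify $(\id\otimes S)(i\otimes y(\xi))\cong i\otimes\xi$, invoke Lemma \ref{lemma:key lemma} (using that cofibrations in $\S$ are monomorphisms), and observe that $(\id\otimes N)(i\otimes\xi)\cong i\otimes y(\xi)$ lies in $\mathscr{I}$ (resp.\ $\mathscr{J}$). Nothing is missing.
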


\begin{proof}
Let $i\otimes y(\xi)\in \mathscr{I}$ (resp. $i\otimes y(\xi) \in \mathscr{J}$). Then $(\id\otimes S)(i\otimes y(\xi))\cong i\otimes \xi$.
By Lemma \ref{lemma:key lemma} $\id \otimes N$ preserves pushout along such maps; since $(\id\otimes N)(i\otimes \xi)\cong i\otimes y(\xi)\in \mathscr{I}$ (resp. $\mathscr{J}$)
the statement follows.   
\end{proof}

\begin{prop}\label{prop: projective model structure}
There is a model structure on $\S\otimes \X$ whose weak equivalences and fibrations are created by the nerve functor $(\id\otimes N)\colon \S\otimes \X\to \S^{\Xi\op}$. The adjunction $(\id\otimes S,\id \otimes N)$ becomes a Quillen adjunction with respect to this model structure. Moreover the nerve functor preserves cofibrations.
\end{prop}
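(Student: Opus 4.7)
The plan is to apply the standard transfer theorem for cofibrantly generated model structures (Kan's transfer) to the adjunction $\id\otimes S : \S^{\Xi\op} \rightleftarrows \S\otimes\X : \id\otimes N$, taking $(\id\otimes S)(\mathscr{I})$ and $(\id\otimes S)(\mathscr{J})$ as the candidate generating (trivial) cofibrations on $\S\otimes\X$. The set-theoretic hypotheses of the theorem are immediate: $\S\otimes\X$ is locally presentable and in particular bicomplete, and the small object argument applies to any set of maps in a locally presentable category. The real content is the acyclicity condition, namely that $\id\otimes N$ sends every relative $(\id\otimes S)(\mathscr{J})$-cell complex to a weak equivalence in $\S^{\Xi\op}$. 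I expect this to be the main — and essentially the only — nontrivial point, and it is where the preceding preparatory results are brought to bear.

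For acyclicity, the two key inputs are Proposition \ref{cor.key} and Corollary \ref{coro:coproducts pres topos}. A relative cell complex is by definition a transfinite composition of pushouts of maps in $(\id\otimes S)(\mathscr{J})$. Proposition \ref{cor.key} guarantees that the nerve of each such pushout is a $\mathscr{J}$-cofibration, i.e.\ a trivial cofibration in the projective model structure on $\S^{\Xi\op}$. Corollary \ref{coro:coproducts pres topos} ensures that $\id\otimes N$ preserves filtered colimits, and in particular transfinite compositions, so the image of the cell complex is itself a transfinite composition of trivial cofibrations in $\S^{\Xi\op}$, hence a trivial cofibration and \emph{a fortiori} a weak equivalence. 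This yields the model structure, with fibrations and weak equivalences created by $\id\otimes N$ as required.

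The Quillen adjunction property is then automatic, since fibrations and trivial fibrations on $\S\otimes\X$ are defined by right-lifting through $\id\otimes N$. It remains to show that $\id\otimes N$ preserves cofibrations. Since $\X$ is a reflective subcategory of $\Set^{\Xi\op}$, the unit of $(S,N)$ is an isomorphism on representables, so $(\id\otimes N)(\id\otimes S)(i\otimes y(\xi))\cong i\otimes y(\xi)\in \mathscr{I}$ for every generating cofibration; thus generating cofibrations are sent to cofibrations. A general cofibration is a retract of a relative $(\id\otimes S)(\mathscr{I})$-cell complex, and the same combination of Proposition \ref{cor.key} (to handle each pushout step, which lands in $\mathscr{I}$-cofibrations) and Corollary \ref{coro:coproducts pres topos} (to handle the transfinite composition) shows that its image under $\id\otimes N$ is a relative $\mathscr{I}$-cell complex; passing to retracts concludes the argument.
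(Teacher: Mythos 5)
Your proof is correct and follows essentially the same route as the paper: the paper simply cites the transfer theorem \cite[Theorem 1.2]{horelmodel}, whose hypotheses are exactly the two inputs you verify by hand, namely Proposition \ref{cor.key} for the pushout steps and preservation of filtered colimits by $\id\otimes N$ for the transfinite compositions. Your explicit verification of acyclicity and of the preservation of cofibrations is just an unpacking of that cited theorem, so the two arguments coincide in substance.
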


\begin{proof}
We transfer the projective model structure on $\S^{\Xi\op}$ to
$\S\otimes \X$ using the adjunction $(\id\otimes S,\id \otimes N)$. 
Since all objects of $\Xi$ are compact $\id \otimes N$ preserves filtered colimits.
The statement is an application of \cite[Theorem 1.2]{horelmodel} using Proposition \ref{cor.key}.
\end{proof}

\begin{prop}\label{prop: left proper}
Suppose that the model structure on $\S$ is left (resp. right) proper. Then the model structure on $\S\otimes \X$ of Proposition \ref{prop: projective model structure} is left (resp. right) proper.	
\end{prop}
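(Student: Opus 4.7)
The plan is to deduce both properness statements for $\S\otimes\X$ from the corresponding properties of the projective model structure on $\S^{\Xi\op}$ and transfer them through the nerve. A preliminary observation is that the projective model structure on $\S^{\Xi\op}$ inherits left (resp.\ right) properness from $\S$: projective cofibrations, built from the generators $i\otimes y(\xi)$, are objectwise cofibrations in $\S$, while pushouts, pullbacks, fibrations and weak equivalences are all computed objectwise in a presheaf category, so the relevant properness passes from $\S$ to $\S^{\Xi\op}$ directly.

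For right properness the argument is essentially formal. The functor $\id\otimes N:\S\otimes\X\to\S^{\Xi\op}$ is a right adjoint and so preserves pullbacks, and by Proposition \ref{prop: projective model structure} it creates both fibrations and weak equivalences. Hence any pullback in $\S\otimes\X$ of a weak equivalence along a fibration is sent by the nerve to a pullback in $\S^{\Xi\op}$ of a weak equivalence along a fibration; the latter is a weak equivalence by right properness of $\S^{\Xi\op}$, and by creation we conclude the original is too.

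Left properness is more delicate since $\id\otimes N$ is not a left adjoint; the crux is to show that it nonetheless preserves pushouts along arbitrary cofibrations of $\S\otimes\X$. The base case is Lemma \ref{lemma:key lemma}, which says that the nerve preserves pushouts of maps of the form $i\otimes C$ with $i$ a monomorphism in $\S$ and $C\in\X$, a class that contains the generating cofibrations $(\id\otimes S)(\mathscr{I})$. Let $\mathcal{G}$ be the class of morphisms along which $\id\otimes N$ preserves pushouts. Then closure of $\mathcal{G}$ under cobase change is immediate from the pasting lemma, closure under transfinite composition follows because $\id\otimes N$ preserves filtered colimits (as used in the proof of Proposition \ref{prop: projective model structure}), and closure under retracts in the arrow category is a diagram chase in which the pushout along a retract is itself realized as a retract of the pushout along the larger map. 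Thus $\mathcal{G}$ contains every cofibration. The proof then finishes as follows: a pushout in $\S\otimes\X$ of a weak equivalence along a cofibration becomes, after applying the nerve, a pushout in $\S^{\Xi\op}$ of a weak equivalence along a projective cofibration (since the nerve preserves cofibrations by Proposition \ref{prop: projective model structure}), and left properness of $\S^{\Xi\op}$ together with creation of weak equivalences by the nerve makes the opposite leg a weak equivalence as required.

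The main obstacle is the retract-closure step for $\mathcal{G}$: if $g$ is a retract of $g'$ via retract data $i_A,r_A,i_B,r_B$ with $r_Ai_A=\id_A$ and $r_Bi_B=\id_B$, and $g'\in\mathcal{G}$, then the pushout $Y$ of $f:A\to X$ along $g$ can be identified as a retract of the pushout $Y'$ of $f\circ r_A:A'\to X$ along $g'$ in a way compatible with the map from $X$. One must then transfer the pushout universal property from $NY'$ to $NY$ through this retract, verifying both existence and uniqueness of the induced comparison map. Every other step of the argument is a routine application of the pasting lemma and of the preservation of filtered colimits and coproducts by the nerve.
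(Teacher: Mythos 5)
Your proof is correct, and for right properness it coincides with the paper's argument (nerve preserves pullbacks, creates fibrations and weak equivalences, and the projective structure on $\S^{\Xi\op}$ is objectwise, hence right proper). For left properness you share all the essential ingredients with the paper --- Lemma \ref{lemma:key lemma} for single cells, preservation of filtered colimits for transfinite composites, the pasting lemma for cobase changes, and then left properness of $\S^{\Xi\op}_{proj}$ combined with reflection of weak equivalences by $\id\otimes N$ --- but you organize the retract step differently. The paper first reduces the properness statement itself to the case of a relative $(\id\otimes S)(\mathscr{I})$-cell complex, using that any cofibration is a retract of such a cell complex and that weak equivalences are closed under retracts; it therefore only needs to show that $\id\otimes N$ preserves pushouts along relative cell complexes. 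You instead prove the stronger intermediate claim that the class $\mathcal{G}$ of maps along which $\id\otimes N$ preserves pushouts is closed under retracts in the arrow category, so that $\mathcal{G}$ contains every cofibration outright. Your retract-closure step does go through: taking $f\circ r_A$ as the attaching map for the larger pushout, the canonical comparison map $N(B)\sqcup_{N(A)}N(X)\to N(Y)$ becomes a retract of the corresponding comparison map for $g'$, which is an isomorphism by hypothesis, and isomorphisms are closed under retracts. What your route buys is a cleaner global statement (the nerve preserves pushouts along all cofibrations, not just cell complexes) at the price of this extra verification; the paper's route avoids it entirely by pushing the retract argument onto weak equivalences, where closure under retracts is axiomatic.
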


\begin{proof}
Since $(\id\otimes N)$ preserves pullbacks, fibrations and reflects weak equivalences, the fact that $\S\otimes \X$ is right proper if $\S$ is right proper follows directly from the fact that the projective model structure on $\S^{\Xi\op}$ is right proper.
	
Suppose now that $\S$ is left proper. In order to prove that $\S\otimes \X$ is left proper, we have to prove that for every cofibration $s\colon A \to B$ and every weak equivalence $w\colon A \to X$ the map $w'$ in the pushout square
	\begin{equation}\label{eq: left proper}
	 \xymatrix{
	 	A 
	 	\ar[r]^-{w}
	 	\ar[d]_-{s} &
	 	X
	 	\ar[d] 
	 	\\
	 	B
	 	\ar[r]_-{w'} &
	 	Y		               
	 }
	\end{equation}
is again a weak equivalence. Since every cofibration is the retract of a relative $(\id \otimes S)(\mathscr{I})$-cell complex it is sufficient to consider the case in which $s$ is a relative $(\id \otimes S)(\mathscr{I})$-cell complex.	Since $(\id\otimes N)$ reflects weak equivalences it is sufficient to check that $(\id\otimes N)(w')$ is a weak equivalence.
	
Now, we prove that $(\id\otimes N)$ sends pushouts along relative $(\id \otimes S)(\mathscr{I})$-cell complexes to pushout squares. The statement will follow by applying $(\id\otimes N)$ to (\ref{eq: left proper}), since $(\id\otimes N)$ preserves cofibrations by Proposition \ref{prop: projective model structure} and the projective model structure on $\S^{\Xi\op}$ is left proper.
	
We are left to prove that $(\id\otimes N)$ sends pushouts along a relative $(\id \otimes S)(\mathscr{I})$-cell complex $s$ to pushout squares. Note that $(\id\otimes N)$ preserves filtered colimits, thus it is sufficent to prove our claim in the case in which $s$ is the pushout of a map $i\otimes \xi \in (\id \otimes S)(\mathscr{I})$ for some $i\in I$, $\xi\in \Xi$. In this last case, we have a commutative diagram of pushout squares
	\[
	\xymatrix{K\otimes \xi 
		        \ar[r] 
		        \ar[d]_{i\otimes \xi} & 
		      A 
		        \ar[r]^-{}
		        \ar[d]_-{s} &
		      X
		        \ar[d] 
		      \\
		      L\otimes \xi
		        \ar[r] &
		      B
		       \ar[r]_-{} &
		      Y		               
		     },
	\]
and we have to prove that $(\id\otimes N)$ sends the right square to a pushout square. By Lemma \ref{lemma:key lemma} the left square and the external square are both sent to pushout squares by $(\id\otimes N)$; it follows that the right square is sent to a pushout square as well. 	    
\end{proof}
	
Now, we prove that the Quillen pair $(\id \otimes S,\id \otimes N)$ of Proposition \ref{prop: projective model structure} is a Quillen equivalence.
This will follow from the next proposition.

\begin{prop}\label{prop: iso unit}
Let $f:X\to Y$ be a cofibration in $\S^{\Xi\op}$. Assume that the unit map $X\to (\id \otimes N)(\id \otimes S)(X)$ is an isomorphism. Then the unit map $Y\to (\id \otimes N)(\id \otimes S)(Y)$ is an isomorphism.
\end{prop}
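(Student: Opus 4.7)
The plan is to proceed by transfinite induction on the cell structure of $f$. Every cofibration in $\S^{\Xi\op}$ is a retract of a relative $\mathscr{I}$-cell complex for $\mathscr{I}=\{i\otimes y(\xi)\mid i\in I,\ \xi\in\Xi\}$, and since retracts of isomorphisms are isomorphisms, it suffices to prove the statement when $f\colon X=Z_0\to Z_\lambda=Y$ is such a cell complex. The base case is vacuous.

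For the successor step, consider a pushout square
\[
\xymatrix{K\otimes y(\xi)\ar[r]\ar[d] & Z_\alpha \ar[d] \\
L\otimes y(\xi)\ar[r] & Z_{\alpha+1}}
\]
in $\S^{\Xi\op}$, with $i\colon K\to L$ in $I$, so in particular a monomorphism in $\S$. Applying the left adjoint $\id\otimes S$ gives a pushout of $K\otimes\xi\to L\otimes\xi$ along $SZ_\alpha$ in $\S\otimes\X$, using that $S(y(\xi))\cong\xi$. By Lemma \ref{lemma:key lemma}, applying $\id\otimes N$ to this pushout produces a pushout square back in $\S^{\Xi\op}$. Since $\xi\in\Xi$, the nerve of $K\otimes\xi$ is canonically isomorphic to $K\otimes y(\xi)$, and likewise for $L$. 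Combining these identifications with the inductive isomorphism $Z_\alpha\cong NSZ_\alpha$, the resulting pushout square coincides with the one defining $Z_{\alpha+1}$, so $Z_{\alpha+1}\to NSZ_{\alpha+1}$ is an isomorphism.

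For the limit step, $Z_\lambda=\colim_{\alpha<\lambda}Z_\alpha$. Since $\id\otimes S$ is a left adjoint and $\id\otimes N$ preserves filtered colimits by Corollary \ref{coro:coproducts pres topos}, one obtains $NSZ_\lambda\cong\colim_{\alpha<\lambda}NSZ_\alpha$, and the unit map at $Z_\lambda$ is the filtered colimit of the unit isomorphisms $Z_\alpha\to NSZ_\alpha$, hence itself an isomorphism.

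The principal subtlety I anticipate is at the successor step: one must verify that the isomorphism produced by comparing the two pushout descriptions of $Z_{\alpha+1}$ and $NSZ_{\alpha+1}$ is actually the unit map $\eta_{Z_{\alpha+1}}$, not some other isomorphism. This is resolved by invoking the naturality of $\eta$, which assembles both pushout squares into a commutative cube whose vertical arrows are components of $\eta$; the universal property of the pushout then forces the comparison map to agree with $\eta_{Z_{\alpha+1}}$.
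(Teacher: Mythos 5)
Your proof is correct and follows essentially the same route as the paper's: reduce to an $\mathscr{I}$-cell complex, then do transfinite induction using Lemma \ref{lemma:key lemma} at successor stages and preservation of filtered colimits at limit stages. The only cosmetic difference is that the paper organizes the successor step as a two-out-of-three argument for pushout squares in a single rectangle whose right-hand square is the naturality square of the unit, which builds in the identification you handle separately at the end; both resolutions of that point are fine.
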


\begin{proof}
Since isomorphisms are closed under retracts and cofibrations in $\S^{\Xi\op}$ are retracts of $\mathscr{I}$-cells complexes, we can assume, without loss of generality, that $f$ is an $\mathscr{I}$-cell complex. Suppose thus that $f$ is the transfinite composition indexed by an ordinal $\beta$
\[
X\cong X_0 \to X_1 \to \dots \to \colim_{\alpha<\beta} X_\alpha\cong Y=:X_{\beta}
\]
in which every map is a pushout along a map of the kind $i\otimes \xi$ for some $i\in I$ and $\xi\in \Xi$.   
Now, we prove by transfinite induction that for each $\alpha$ the unit map $X_\alpha\to (\id \otimes N)(\id \otimes S)(X_\alpha)$ is an isomorphism. We already know that the unit map $X\to (\id \otimes N)(\id \otimes S)(X)$ is an isomorphism. Assume that the result holds for $X_\alpha$. We have the following diagram
\[
\xymatrix{
K\otimes \xi 
  \ar[d]_-{i\otimes \xi}
  \ar[r] &
X_\alpha 
  \ar[d]
  \ar[r]^-{\cong} &
(\id\otimes N)(\id\otimes S) (X_{\alpha}) 
  \ar[d]
\\
L\otimes \xi 
  \ar[r] &
X_{\alpha+1} 
  \ar[r] &
(\id\otimes N)(\id\otimes S) (X_{\alpha+1}) 
}
\]
in which the left square is a pushout square, the external square is also a pushout square by Lemma \ref{lemma:key lemma}; it follows that
the right square is also a pushout square and therefore the unit $X_{\alpha+1} \to (\id\otimes N)(\id\otimes S) (X_{\alpha+1})$
is an isomorphism.

Finally, if $\alpha\leq \beta$ is a limit ordinal and the result holds for $X_{\alpha'}$ for any $\alpha'<\alpha$ then it holds for $X_\alpha\cong \colim_{\alpha'<\alpha}X_{\alpha'}$ as well
since a filtered colimit of isomorphisms is an isomorphism.
\end{proof}

\begin{coro}\label{coro:cofibrant objects}
Let $X$ be a cofibrant object of $\S^{\Xi\op}$, then the unit map $X\to (\id\otimes N)\circ(\id\otimes S)(X)$ is an isomorphism.
\end{coro}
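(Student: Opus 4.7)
The plan is to deduce this corollary as an essentially immediate consequence of Proposition \ref{prop: iso unit}. The key observation is that an object $X$ of $\S^{\Xi\op}$ is cofibrant precisely when the canonical map $\emptyset \to X$ from the initial object is a cofibration. So, provided we can verify that the unit map is an isomorphism for the initial object, Proposition \ref{prop: iso unit} applied to the cofibration $\emptyset \to X$ will yield the result.

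Thus the only thing to check is that $\emptyset \to (\id \otimes N)(\id \otimes S)(\emptyset)$ is an isomorphism. For this, I would first note that $\id \otimes S$ is a left adjoint and hence preserves initial objects, so $(\id \otimes S)(\emptyset) \cong \emptyset$ in $\S \otimes \X$. Then I would invoke Corollary \ref{coro:coproducts pres topos}, which tells us that $\id \otimes N$ preserves coproducts; since the initial object is the empty coproduct, it follows that $(\id \otimes N)(\emptyset) \cong \emptyset$ in $\S^{\Xi\op}$.

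With these two facts combined, the unit map on $\emptyset$ is the identity of the initial object, in particular an isomorphism. Proposition \ref{prop: iso unit} then propagates the isomorphism of the unit along the cofibration $\emptyset \to X$, yielding the desired conclusion. There is no real obstacle here — the whole content has been packed into Proposition \ref{prop: iso unit} and Corollary \ref{coro:coproducts pres topos}, and this corollary is just the specialization to the case where we start the transfinite induction at the initial object.
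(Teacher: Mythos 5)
Your proposal is correct and matches the paper's own argument: the paper likewise applies Proposition \ref{prop: iso unit} to $\varnothing\to X$ after observing that both $\id\otimes S$ and $\id\otimes N$ preserve the initial object, the latter because the objects of $\Xi$ are connected (which is exactly what underlies Corollary \ref{coro:coproducts pres topos}).
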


\begin{proof}
This is just Proposition \ref{prop: iso unit} applied to the map $\varnothing\to X$, observing that both $(\id \otimes S)$ and $(\id \otimes N)$ preserve the
initial object (this uses the fact that the objects of $\Xi$ are connected in $\X$).
\end{proof}

\begin{prop}\label{prop: Quillen equivalence projective}
The Quillen adjunction of Proposition \ref{prop: projective model structure}
\[
(\id\otimes S)\colon \S^{\Xi\op}\leftrightarrows \S\otimes \X \colon (\id \otimes N)
\]
is a Quillen equivalence.
\end{prop}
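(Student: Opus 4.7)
The plan is to verify the standard characterization of Quillen equivalences: the Quillen adjunction $(\id\otimes S,\id\otimes N)$ is a Quillen equivalence if and only if (i) the right adjoint $\id\otimes N$ reflects weak equivalences between fibrant objects, and (ii) for every cofibrant $X\in\S^{\Xi\op}$, the derived unit $X\to (\id\otimes N)(R(\id\otimes S)(X))$ is a weak equivalence in $\S^{\Xi\op}$, where $R$ denotes a fibrant replacement functor in $\S\otimes\X$.

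Condition (i) is immediate from Proposition \ref{prop: projective model structure}: since the weak equivalences (and the fibrations) in $\S\otimes\X$ were defined to be those maps whose image under $\id\otimes N$ is a weak equivalence (resp.\ fibration) in $\S^{\Xi\op}$, the functor $\id\otimes N$ in fact creates weak equivalences, and in particular reflects all of them, not just those between fibrant objects.

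For condition (ii), fix a cofibrant $X\in\S^{\Xi\op}$ and factor the derived unit as the composite
\[
X \longrightarrow (\id\otimes N)(\id\otimes S)(X) \longrightarrow (\id\otimes N)(R(\id\otimes S)(X)).
\]
The first arrow is the ordinary unit of the adjunction, which is an isomorphism by Corollary \ref{coro:cofibrant objects}. The second arrow is the image under $\id\otimes N$ of the fibrant replacement $(\id\otimes S)(X)\to R(\id\otimes S)(X)$, which is a weak equivalence in $\S\otimes\X$; since $\id\otimes N$ preserves weak equivalences (by the transferred definition), this image is a weak equivalence in $\S^{\Xi\op}$. The composite of an isomorphism and a weak equivalence is a weak equivalence, so (ii) follows.

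No real obstacle is expected at this stage: all the genuine work has already been carried out in Corollary \ref{coro:cofibrant objects}, which reduces via transfinite induction to the cell-attachment Lemma \ref{lemma:key lemma} (the latter being the conceptual heart of the argument, handled by base-changing along a Boolean covering topos $\widetilde{\S}\to\S$ and using that $\id\otimes N$ commutes with coproducts). The present proposition is then essentially a formal consequence of that corollary together with the construction of the transferred model structure.
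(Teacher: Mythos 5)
Your proof is correct and uses the same essential ingredients as the paper: Corollary \ref{coro:cofibrant objects} (the unit is an isomorphism on cofibrant objects) together with the fact that $\id\otimes N$ creates weak equivalences. The paper merely packages the argument via the direct adjoint-map criterion (for cofibrant $X$ and fibrant $Y$, a map $(\id\otimes S)(X)\to Y$ is a weak equivalence iff its adjoint is) rather than the derived-unit criterion, but the two are equivalent and the verification is identical.
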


\begin{proof}
Let $f\colon (\id\otimes S)(X) \to Y$ be a map in $\S\otimes \X$ where $X$ is a cofibrant object in $\S^{\Xi\op}$ and $Y$ is a fibrant object in $\S \otimes \X$ ; we have to prove that $f$ is a weak equivalence if and only if composition of $(\id\otimes N)(f)$ with the unit map of the adjunction $X\to (\id\otimes N)(\id\otimes S)(X)\to (\id\otimes N)(Y)$ is a weak equivalence. Since the unit map is an isomorphism by Corollary \ref{coro:cofibrant objects} and the functor $(\id\otimes N)$ reflects and preserves weak equivalences, the statement follows from the two-out-of-three property for weak
equivalences.   
\end{proof}

We have also the following characterization of the cofibrations.

\begin{prop}\label{prop:characterization of cofibrations}
A map $f:A\to B$ in $\S\otimes\X$ is a cofibration if and only if $(\id\otimes N)(f)$ is a projective cofibration.
\end{prop}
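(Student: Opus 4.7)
The forward direction is immediate from Proposition \ref{prop: projective model structure}, which asserts that the nerve functor preserves cofibrations: if $f$ is a cofibration in $\S\otimes\X$, then $(\id\otimes N)(f)$ is a projective cofibration.

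For the converse, my plan is to exploit the factorization axiom in the transferred model structure on $\S\otimes\X$. Given $f\colon A\to B$ with $(\id\otimes N)(f)$ a projective cofibration, factor $f = p\circ i$ with $i\colon A\to C$ a cofibration and $p\colon C\to B$ a trivial fibration in $\S\otimes\X$. The strategy is then to show that $f$ has the left lifting property with respect to its own trivial-fibration part $p$; the standard retract argument then exhibits $f$ as a retract of $i$, hence $f$ is a cofibration.

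To produce the required lift in the square whose top edge is $i$, left edge is $f$, right edge is $p$, and bottom edge is the identity on $B$, I would apply $(\id\otimes N)$. By the definition of the transferred model structure, $(\id\otimes N)(p)$ is a trivial fibration in $\S^{\Xi\op}$; combined with the hypothesis that $(\id\otimes N)(f)$ is a projective cofibration, this produces a diagonal filler $(\id\otimes N)(B)\to(\id\otimes N)(C)$ in $\S^{\Xi\op}$. The decisive point is that $(\id\otimes N)$ is fully faithful by Proposition \ref{prop:reflective localization}, so this filler is the image of a unique morphism $B\to C$ in $\S\otimes\X$, and faithfulness also ensures that the two triangles commute downstairs because they commute upstairs.

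I expect no serious obstacle beyond keeping the retract argument straight. The only conceptual subtlety worth flagging is that lifts originally exist only in $\S^{\Xi\op}$, and one must invoke the full faithfulness of the reflective inclusion $(\id\otimes N)$ (rather than any deeper property of the model structure) to transport them back to $\S\otimes\X$.
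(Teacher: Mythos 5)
Your proof is correct, but it takes a genuinely different route from the paper's. The paper disposes of the converse in one line: if $(\id\otimes N)(f)$ is a projective cofibration, then $(\id\otimes S)(\id\otimes N)(f)$ is a cofibration in $\S\otimes\X$ because $\id\otimes S$ is left Quillen, and since $\id\otimes N$ is fully faithful the counit gives $(\id\otimes S)\circ(\id\otimes N)\cong\id$, so this map is isomorphic to $f$ itself. Your argument instead runs the standard retract machinery: factor $f=p\circ i$ into a cofibration followed by a trivial fibration, solve the lifting problem against $p$ upstairs in $\S^{\Xi\op}$ (where $(\id\otimes N)(p)$ is a trivial fibration by definition of the transferred structure and $(\id\otimes N)(f)$ is a cofibration by hypothesis), and descend the filler by full faithfulness of $\id\otimes N$ — exactly the technique the paper itself uses later, in the proof of Theorem \ref{theo: main}, to show that $\id\otimes N$ reflects fibrations. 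Both arguments ultimately rest on the reflectivity of $\S\otimes\X$ in $\S^{\Xi\op}$; the paper's is shorter and avoids invoking the factorization axioms of the transferred model structure, while yours is more robust in that it only uses that $\id\otimes N$ is a fully faithful right Quillen functor creating trivial fibrations, without needing the counit of a \emph{left Quillen} left adjoint to be an isomorphism. There is no gap in your argument.
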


\begin{proof}
We already know that $(\id\otimes N)$ preserves cofibrations.  Conversely, if $(\id\otimes N)(f)$ is a cofibration, then $(\id\otimes S)\circ(\id\otimes N)(f)$ is a cofibration. But since the functor $(\id\otimes S)\circ(\id\otimes N)$ is isomorphic to the identity functor, we are done.
\end{proof}

We can now state and prove our main theorem.

\begin{theo}\label{theo: main}
Assume that $\S$ is left proper and that $\S^{\Xi\op}$ is equipped with a model structure that is a left Bousfield localization of the projective model structure with respect to a set of maps. Then there is a model structure on $\S\otimes\X$ in which a map is a fibration (resp. cofibration, weak equivalence) if and only if its image by $\id\otimes N$ is a fibration (resp. cofibration, weak equivalence). The functor $\id\otimes N$ is then a right Quillen equivalence. Finally the model structure on $\S\otimes\X$ is left proper and is right proper if $\S^{\Xi\op}$ is right proper.
\end{theo}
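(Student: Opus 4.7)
The plan is to obtain the desired model structure as a left Bousfield localization of the projective model structure on $\S\otimes\X$ constructed in Proposition \ref{prop: projective model structure}. The projective structure is combinatorial, since $\S$ is combinatorial, $\S^{\Xi\op}$ inherits combinatoriality, and the transferred structure on $\S\otimes\X$ is again combinatorial. By Proposition \ref{prop: left proper}, it is also left proper. Writing $\mathcal{K}$ for the set of maps at which $\S^{\Xi\op}$ is localized, Smith's theorem guarantees that the left Bousfield localization of $\S\otimes\X$ at (a cofibrant replacement of) $(\id\otimes S)(\mathcal{K})$ exists. I would take this as the candidate model structure and verify that it matches the one described in the statement.

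The first step is to identify the fibrant objects: by the universal property of left Bousfield localization, $Y$ is fibrant in the localized $\S\otimes\X$ if and only if it is projectively fibrant and $(\id\otimes S)(\mathcal{K})$-local. Via the derived adjunction of Proposition \ref{prop: Quillen equivalence projective}, the locality condition is equivalent to $(\id\otimes N)(Y)$ being $\mathcal{K}$-local in $\S^{\Xi\op}$. Cofibrations are unchanged by Bousfield localization on both sides, so Proposition \ref{prop:characterization of cofibrations} gives that a map is a cofibration if and only if its nerve is. For weak equivalences, a map $f$ is a weak equivalence in the localized $\S\otimes\X$ if and only if $\Map(f,Y)$ is a weak equivalence for every fibrant $Y$; combined with the identification of fibrant objects above and the Quillen equivalence of Proposition \ref{prop: Quillen equivalence projective}, this shows that $f$ is a weak equivalence if and only if $(\id\otimes N)(f)$ is. Fibrations are then determined by the right lifting property against trivial cofibrations, so the nerve creates all three classes.

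For the Quillen equivalence, cofibrant objects in the localized $\S^{\Xi\op}$ coincide with the projective cofibrant ones, so by Corollary \ref{coro:cofibrant objects} the unit map remains an isomorphism on cofibrant objects; the argument of Proposition \ref{prop: Quillen equivalence projective} then applies verbatim. Left properness is automatic since any left Bousfield localization of a left proper combinatorial model category is left proper. For right properness, assuming $\S^{\Xi\op}$ is right proper, the nerve preserves pullbacks and fibrations and reflects weak equivalences, so right properness transfers from $\S^{\Xi\op}$ to $\S\otimes\X$ by the same reasoning as in Proposition \ref{prop: left proper}.

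The main obstacle is the identification of fibrant objects through the Quillen equivalence. One must check carefully that the equivalence of derived mapping spaces provided by Proposition \ref{prop: Quillen equivalence projective} interacts correctly with the notion of local object on both sides, which amounts to observing that $(\id\otimes S)$ is a left Quillen equivalence and hence that its derived functor sends $\mathcal{K}$ to a set detecting the same local objects as $(\id\otimes S)(\mathcal{K})$. Once this compatibility is in place, the rest of the argument is formal.
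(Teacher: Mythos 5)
Your strategy coincides with the paper's: localize the transferred projective model structure on $\S\otimes\X$ at the derived image $\L(\id\otimes S)(\mathcal{K})$ of the localizing set, use left properness (Proposition \ref{prop: left proper}) and combinatoriality to guarantee that this localization exists, observe that cofibrations are untouched by localization so that Proposition \ref{prop:characterization of cofibrations} still applies, and identify the localized weak equivalences on the two sides through the projective-level Quillen equivalence of Proposition \ref{prop: Quillen equivalence projective}. Your sketch of the weak-equivalence identification via local objects and derived mapping spaces is essentially the content of the result the paper outsources to \cite[Proposition 3.7]{horelmodel}, and you correctly isolate the compatibility of locality with the derived adjunction as the point requiring care; that part of the argument can be completed because $\id\otimes N$ preserves and reflects projective weak equivalences, so it computes its own derived functor.

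There is, however, one genuine gap. The sentence ``Fibrations are then determined by the right lifting property against trivial cofibrations, so the nerve creates all three classes'' does not establish that $\id\otimes N$ \emph{reflects} fibrations, which is part of the statement. Knowing that the trivial cofibrations of the localized $\S\otimes\X$ are exactly the maps whose nerve is a trivial cofibration tells you that $p$ is a fibration if and only if it lifts against every $i$ with $(\id\otimes N)(i)$ a trivial cofibration; it does not follow formally that $(\id\otimes N)(p)$ being a fibration forces $p$ to have this lifting property, since lifting problems do not transport backwards along an arbitrary functor. (The other direction is fine: $\id\otimes N$ is right Quillen for the localized adjunction and hence preserves fibrations.) The paper closes exactly this gap with an explicit argument: given a lifting problem of a trivial cofibration $i$ against $p$, apply $\id\otimes N$, solve the resulting problem in the localization of $\S^{\Xi\op}$, and use the full faithfulness of $\id\otimes N$ (Proposition \ref{prop:reflective localization}) to descend the lift and verify that it solves the original problem. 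Some such appeal to full faithfulness is indispensable here, and your proposal is missing it.
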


\begin{proof}
We denote by $\S^{\Xi\op}_{proj}$ the projective model structure on $\S^{\Xi\op}$ and by $\S\otimes\X_{proj}$ the model structure constructed in Proposition \ref{prop: projective model structure}. By assumption, the model structure $\S^{\Xi\op}$ is obtained by localizing $\S^{\Xi\op}_{proj}$ with respect to a set $U$ of maps in $\S^{\Xi\op}$.  Let $V$ be the set of maps of $\S\otimes\X$ of the form $\L(\id\otimes S)(u)$ for $u\in U$. Since $\S\otimes\X_{proj}$ is left proper by \ref{prop: left proper} and combinatorial, the left Bousfield localization $L_V(\S\otimes\X_{proj})$ of $\S\otimes\X_{proj}$ exists.

According to \cite[Theorem 3.3.20]{hirschhornmodel} and Proposition \ref{prop: Quillen equivalence projective}, there is a Quillen equivalence
\[(\id\otimes S):L_U(\S^{\Xi\op}_{proj})\leftrightarrows L_{V}(\S\otimes\X_{proj}):(\id\otimes N),\]
where the right adjoint preserves and reflects weak equivalences by \cite[Proposition 3.7.]{horelmodel}. 

Cofibrations are not changed by a left Bousfield localization, hence, by Proposition, \ref{prop:characterization of cofibrations}, a map is a cofibration if and only if its image by $(\id\otimes N)$ is one. It remains to show that $\id\otimes N$ reflects fibrations. Let $p:U\to V$ be a map in $L_V(\S\otimes\X_{proj})$ that is sent to a fibration by $(\id\otimes N)$. Let $i:A\to B$ be a trivial cofibration in $L_V(\S\otimes\X_{proj})$. Let
\[
\xymatrix{
A\ar[d]_{i}\ar[r]&U\ar[d]^{p}\\
B\ar[r]& V
}
\]
be a commutative diagram. We want to produce a lift. We can hit this diagram with $\id\otimes N$. The map $\id\otimes N(i)$ is a trivial cofibration. It follows that there exists a map $l:\id\otimes N(B)\to\id\otimes N(U)$ making the diagram
\[
\xymatrix{
\id\otimes N(A)\ar[d]_{\id\otimes N(i)}\ar[r]&\id\otimes N(U)\ar[d]^{\id\otimes N(p)}\\
\id\otimes N(B)\ar[r]\ar[ur]_{l}& \id\otimes N(V)
}
\]
commute. Since $\id\otimes N$ is fully faithful, this implies the existence of a lift for the initial square.

Finally, the model structure is left proper as is any left Bousfield localization of a left proper model structure and the argument for right properness is similar to the one in Proposition \ref{prop: left proper}.
\end{proof}

\section{Enrichment}

Let $\cat{V}$ be a cofibrantly generated monoidal model category. In this section, we study the existence of the structure of a $\cat{V}$-enriched model category on the model categories of Theorem \ref{theo: main}.

Let $\cat{C}$ be a bicomplete $\cat{V}$-enriched category that we assume to be tensored and cotensored over $\cat{V}$. For every $C\in \cat{C}$ and $V\in \cat{V}$ we denote by , $V\odot C$ and $C^{V}$  the tensor of $c$ with $v$ and the cotensor of $C$ with $V$ respectively, while the hom-functor will be denoted by $\emap{\cat{V}}(-,-)$. Recall that $\cat{C}$ is a $\cat{V}$-enriched model category if it has a model structure such that for every cofibration $i\colon K \to L$ in $\cat{V}$ and every cofibration $j\colon A \to B$ in $\cat{C}$ the induced pushout-product map
\[
i\square j \colon L\odot A \underset{K \odot A}\sqcup K\odot B \longrightarrow L\odot B
\] 
is a cofibration in $\cat{C}$ which is moreover a weak equivalence if either $i$ or $j$ are trivial cofibrations.

For every small category $D$ the category $\Fcat{D}{\cat{C}}$ is also $\cat{V}$-enriched and bicomplete in a natural way (see \cite[4.4]{GuillouMay}). Now, we suppose that our category of ``spaces'' $\S$, satisfying the assumptions \ref{assumption S}, is enriched over $\cat{V}$, with an enrichment that gives it the structure of a $\cat{V}$-model category; in particular $\S$ is tensored and cotensored over $\cat{V}$. For every locally presentable category $\cat{Y}$, that we can suppose isomorphic to $\Indl{\lambda}(A)$ for some small category $A$, the category $\cat{Y} \otimes \S$, seen as a full subcategory of $\Fcat{A \op}{\S}$, is naturally enriched in $\cat{V}$. It is also  bicomplete as a $\cat{V}$-category. For every $V \in \cat{V}$ the tensor with $V$ is given by $\id \otimes (V\odot -)$ and the cotensor with $V$ is given by $\id \otimes (-^V)$.  
 
Given another locally presentable category $\cat{Z}$, every adjunction $u^*:\cat{Y}\rightleftarrows\cat{Z}:u_*$ induces a $\cat{V}$-adjunction
\[
u^*\otimes \id :\cat{Y}\otimes \S \rightleftarrows\cat{Z} \otimes \S :u_*\otimes \id. 
\] 
Indeed, $u_*\otimes \id$ extends to a $\cat{V}$-functor and it preserves cotensors, thus it has a $\cat{V}$-left adjoint by \cite[Theorem 4.85]{KellyEC}. 

The following proposition can be seen as a particular case of \cite[Theorem 1.16]{GuillouMay}.

\begin{prop}
Let $\cat{V}$ be a cofibrantly generated monoidal model category  and suppose that $\S$ is $\cat{V}$-enriched as a model category. Then the model structure on $\S\otimes \X$ of Proposition \ref{prop: projective model structure} is $\cat{V}$-enriched.
\end{prop}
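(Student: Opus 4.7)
The plan is to verify the pushout-product axiom for the $\cat{V}$-enrichment on $\S\otimes\X$ by reducing to generating (trivial) cofibrations. Recall from Proposition \ref{prop: projective model structure} that the generating (trivial) cofibrations of $\S\otimes\X$ are the maps of the form $f\otimes\xi$ with $f$ a generating (trivial) cofibration of $\S$ and $\xi\in\Xi$, and that the $\cat{V}$-tensor on $\S\otimes\X$ is $V\odot(-)=\id_{\X}\otimes(V\odot_{\S}-)$, acting on the $\S$-factor. First I would combine the defining adjunction $\FcatR{\X\op}{\S}(X\otimes\xi,F)\cong\S(X,F(\xi))$ from formula \ref{c otimes d} with the $\cat{V}$-adjunction of $\S$ to obtain a natural isomorphism $V\odot(X\otimes\xi)\cong(V\odot X)\otimes\xi$.

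Next I would use that the bifunctor $\S\times\X\to\S\otimes\X$ preserves colimits in each variable, so $-\otimes\xi\colon\S\to\S\otimes\X$ preserves pushouts. Combined with the identification above, this gives, for any cofibration $i\colon K\to L$ in $\cat{V}$ and any map $f\colon X\to Y$ in $\S$, a natural isomorphism
\[
i\square(f\otimes\xi)\cong(i\square f)\otimes\xi.
\]
To establish the pushout-product axiom for $i$ and a generating (trivial) cofibration $f\otimes\xi$, it will then suffice to show that $-\otimes\xi$ sends cofibrations (resp.\ trivial cofibrations) in $\S$ to cofibrations (resp.\ trivial cofibrations) in $\S\otimes\X$; indeed $i\square f$ is a cofibration in $\S$ (trivial if either $i$ or $f$ is) by the $\cat{V}$-enrichment of $\S$.

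For this last step, any cofibration $h$ in $\S$ is a retract of an $I$-cell complex, and applying the colimit-preserving functor $-\otimes\xi$ produces a retract of a $\{i'\otimes\xi:i'\in I\}$-cell complex, hence a cofibration in $\S\otimes\X$ by construction. The analogous argument with $J$ replacing $I$ handles the trivial cofibration case. The general pushout-product axiom then follows from the standard closure properties of (trivial) cofibrations under retracts, cobase change, and transfinite composition, all of which commute with $i\square-$ for fixed $i$. The hardest part I anticipate is the initial identification $V\odot(X\otimes\xi)\cong(V\odot X)\otimes\xi$, which requires careful bookkeeping of how the $\cat{V}$-enrichment interacts with the universal bifunctor $\S\times\X\to\S\otimes\X$; once this compatibility is in hand, the remainder is formal and parallels the more general treatment in \cite[Theorem 1.16]{GuillouMay}.
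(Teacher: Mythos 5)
Your argument is correct, but it takes the opposite route from the paper. You verify the pushout-product axiom on the tensor side: you identify $V\odot(K\otimes\xi)\cong(V\odot K)\otimes\xi$ (which is essentially definitional, since the tensor with $V$ on $\S\otimes\X$ is $(V\odot-)\otimes\id$ and this functor is induced by the bifunctor $(K,C)\mapsto (V\odot K)\otimes C$), deduce $i\square(f\otimes\xi)\cong(i\square f)\otimes\xi$ on the generating (trivial) cofibrations $(\id\otimes S)(\mathscr{I})$ and $(\id\otimes S)(\mathscr{J})$, and then propagate to all (trivial) cofibrations by the usual cell-complex/retract closure of the pushout-product. The paper instead dualizes: it checks the equivalent cotensor condition that $\coten{(i^*)}{p_*}\colon \coten{L}{X}\to \coten{Y}{L}\times_{\coten{K}{Y}}\coten{K}{X}$ is a (trivial) fibration, which is immediate because $\id\otimes N$ creates (trivial) fibrations, preserves cotensors (being a $\cat{V}$-right adjoint), and the projective model structure on $\S^{\Xi\op}$ is already known to be $\cat{V}$-enriched. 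The paper's route is shorter and avoids any analysis of cofibrations in $\S\otimes\X$; yours is self-contained on the cofibration side but does require the compatibility of the enrichment with the universal bifunctor and the reduction to generators, both of which you correctly identify and which do go through. Both proofs are valid.
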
	

\begin{proof}
	Recall that if $\S$ is a $\cat{V}$-model category then $\S^{\Xi\op}$ is naturally $\cat{V}$-enriched and the projective model structure is a $\cat{V}$-model structure (see \cite[4.4]{GuillouMay}).
	The adjunction $(\id\otimes S,\id\otimes N)$ can be extended to a $\cat{V}$-adjunction; since $\id\otimes N$ is right adjoint it preserves cotensors.
	
	To prove that $\S\otimes \X$ is a $\cat{V}$-model structure it is sufficient to check that for every cofibration $i\colon K \to L$ in  $\cat{V}$ and every fibration in $i\colon X \to Y$ in $\S\otimes \X$ the morphism
	\[
	 \coten{(i^*)}{p_*} \colon \coten{L}{X} \longrightarrow \coten{Y}{L} \underset{\coten{K}{Y}}\times \coten{K}{X}
	\]
	is a fibration, and a weak equivalence if $i$ or $p$ are weak equivalences (\emph{cf.} \cite[4.3]{GuillouMay}).
	Since $\id\otimes N$ creates (trivial) fibrations and preserves cotensors, the above statement follows from the fact that the projective model structure on $\S^{\Xi\op}$ is $\cat{V}$-enriched.   
\end{proof}

Following \cite{barwickleft}, we call a model category tractable if it is combinatorial and admits a set of generating cofibrations with cofibrant source. Let us assume that $\cat{V}$ is a tractable symmetric monodial model category. We recall that, given a $\cat{V}$-model structure $\cat{C}$ and a class of maps $H$ in $\cat{C}$, the $\cat{V}$-enriched left Bousfield localization of $\cat{C}$ at $H$, denoted by $L_{H/\cat{V}}(\cat{C})$, is the unique $\cat{V}$-enriched model structure on (the underlying $\cat{V}$-category of) $\cat{C}$ (if it exists) such that:
\begin{itemize}
    \item[-] The cofibrations are the cofibrations of $\cat{C}$.
    \item[-] An object $F$ is fibrant in $L_{H/\cat{V}}(\cat{C})$ if and only if it is fibrant in $\cat{C}$ and the map
    \[
    h_*\colon \rder\emap{\cat{V}}(B,F)
    \longrightarrow \rder\emap{\cat{V}}(A,F)
    \]
    is a weak equivalence (in $\cat{V}$) for every $h\in H$.
   \end{itemize} 

Barwick showed in \cite[Theorem 4.46]{barwickleft} that if $\cat{C}$ is a tractable left proper $\cat{V}$-model category and $H$ is a set of maps, the $\cat{V}$-enriched left Bousfield localization $L_{H/\cat{V}}(\cat{C})$
always exists (and it is tractable).

\begin{rem}
When $\cat{V}$ is the category $\Set^{\Delta\op}$ with the Kan-Quillen model structure, $\cat{V}$-enriched Bousfield localization at $H$ coincides with the usual Bousfield localization at $H$ and it is enough to assume that $\cat{C}$ is combinatorial and left proper for $L_{H/\sSet}(\cat{C})$ to exist (\emph{cf.}  \cite[Theorem 4.1.1]{hirschhornmodel}).
\end{rem}
     
Now, we assume that our category of spaces $\S$ is a model category satisfying assumption \ref{assumption S} which is furthermore a left proper tractable $\cat{V}$-model category. Under these assumptions $\S^{\Xi^{\op}}_{proj}$ is also tractable and left proper and the same holds for the model structure
$\S\otimes \X_{proj}$ of Proposition \ref{prop: projective model structure}. The following is an enriched version of Theorem \ref{theo: main}.

\begin{prop}\label{prop:enrichedmain}
Suppose $H$ is a set of maps in $\S^{\Xi^{\op}}$. The $\cat{V}$-model structure on $L_{H/\cat{V}} (\S^{\Xi^{\op}})$ can be transferred along $\id\otimes N$ to a $\cat{V}$-enriched model structure on $\S\otimes\X$ in which a map is a fibration (resp. cofibration, weak equivalence) if and only if its image by $\id\otimes N$ is a fibration (resp. cofibration, weak equivalence). The functor $\id\otimes N$ is then a right Quillen equivalence. Finally the model structure on $\S\otimes\X$ is left proper and is right proper if $\S$ is right proper.
\end{prop}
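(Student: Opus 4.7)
The plan is to follow the proof of Theorem \ref{theo: main} essentially verbatim, replacing Hirschhorn's unenriched Bousfield localization with Barwick's $\cat{V}$-enriched version. First observe that the projective model structure $\S\otimes\X_{proj}$ of Proposition \ref{prop: projective model structure} is $\cat{V}$-enriched by the previous proposition, left proper by Proposition \ref{prop: left proper}, and tractable (because the left Quillen functor $\id\otimes S$ sends cofibrant sources of the generators $\mathscr{I}$ to cofibrant objects, concretely $A\otimes\xi$ for $A$ cofibrant in $\S$ and $\xi\in\Xi$). Setting $V := \L(\id\otimes S)(H)$, Barwick's \cite[Theorem 4.46]{barwickleft} then produces the $\cat{V}$-enriched left Bousfield localization $L_{V/\cat{V}}(\S\otimes\X_{proj})$, which is itself left proper and tractable.

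The core step is to verify that the $\cat{V}$-Quillen pair $(\id\otimes S, \id\otimes N)$ descends to a $\cat{V}$-Quillen equivalence between $L_{H/\cat{V}}(\S^{\Xi\op})$ and $L_{V/\cat{V}}(\S\otimes\X_{proj})$. That it remains a Quillen adjunction follows from the enriched analogue of \cite[Theorem 3.3.20]{hirschhornmodel} applied to the universal property of the enriched localization: $\id\otimes S$ sends cofibrant replacements of maps in $H$ to maps weakly equivalent to elements of $V$, hence to weak equivalences in the target. For the equivalence property we follow the argument of Proposition \ref{prop: Quillen equivalence projective}: left Bousfield localization does not alter the cofibrations, so the derived unit of the adjunction is still an isomorphism on cofibrant objects by Corollary \ref{coro:cofibrant objects}, and it therefore suffices to check that $\id\otimes N$ reflects weak equivalences between fibrant objects of the localized categories. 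This holds because $\id\otimes N$ is a right $\cat{V}$-adjoint preserving cotensors, so it preserves the derived enriched mapping spaces that characterize fibrancy in Barwick's localization, and it already reflects projective weak equivalences.

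The characterization of the three classes of maps in $L_{V/\cat{V}}(\S\otimes\X_{proj})$ through $\id\otimes N$ is then obtained exactly as in Theorem \ref{theo: main}: cofibrations from Proposition \ref{prop:characterization of cofibrations} combined with the invariance of cofibrations under localization, weak equivalences from the Quillen equivalence together with \cite[Proposition 3.7.]{horelmodel}, and fibrations from the lifting argument exploiting the full faithfulness of $\id\otimes N$. Left properness is automatic for any left Bousfield localization of a left proper model category; right properness, under the additional hypothesis, is inherited because $\id\otimes N$ preserves pullbacks and creates fibrations and weak equivalences. The main obstacle is to pin down carefully the enriched analogue of \cite[Theorem 3.3.20]{hirschhornmodel} for Barwick's construction and the compatibility of $\id\otimes N$ with the derived enriched mapping spaces used to test fibrancy after localization; both are reasonable consequences of the universal property of enriched Bousfield localization but require explicit verification within Barwick's framework.
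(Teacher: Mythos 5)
Your proof takes a genuinely different route from the paper's. You rebuild the whole argument of Theorem \ref{theo: main} inside Barwick's enriched framework: you form the enriched localization $L_{V/\cat{V}}(\S\otimes\X_{proj})$ with $V=\lder(\id\otimes S)(H)$ directly and then re-verify the Quillen equivalence, the characterization of the three classes of maps, and properness, which forces you to invoke an ``enriched analogue of Hirschhorn's Theorem 3.3.20'' and a compatibility of $\id\otimes N$ with the derived enriched mapping spaces used to test local fibrancy --- two ingredients you yourself flag as requiring verification. The paper instead makes a single reduction: by Barwick's Theorem 4.46, the enriched localization $L_{H/\cat{V}}(\S^{\Xi\op})$ coincides, as a model category, with the \emph{ordinary} left Bousfield localization of the projective structure at the set of pushout-products $H\square I$, where $I$ is a set of generating cofibrations of $\cat{V}$. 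The unenriched Theorem \ref{theo: main} then applies verbatim to this localization, and the resulting transferred structure is identified with the enriched localization $L_{\lder(\id\otimes S)(H)/\cat{V}}(\S\otimes\X_{proj})$ via the equality $\lder(\id\otimes S)(H\square I)=\lder(\id\otimes S)(H)\square I$, which holds because $\id\otimes S$ is a $\cat{V}$-left adjoint and hence commutes with pushout-products by maps of $\cat{V}$. This reduction makes precisely the step you could not pin down unnecessary: no enriched localization theory beyond the identification $L_{H/\cat{V}}=L_{H\square I}$ is needed. Your outline is salvageable, but as written the appeal to an unproved enriched version of Hirschhorn's theorem is a gap; either supply that statement within Barwick's framework or, more economically, adopt the paper's reduction.
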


\begin{proof}
The enriched left Bousfield localization $L_{H/\cat{V}} (\S^{\Xi^{\op}})$ is the usual Bousfield localization of the projective model structure on $\S^{\Xi^{\op}}$ with respect to 
\[H\square I=\{h\square i\mid h\in H, i\in I\},\] 
where $I$ is a set of generating cofibrations for $\cat{V}$ (Theorem 4.46, \cite{barwickleft}); the statement follows from Theorem \ref{theo: main}. The transfer of this model structure on $\S\otimes \X$ is the localization of $\S\otimes\X_{proj}$ with respect to $\lder(\id\otimes S)(H\square I)=\lder (\id\otimes S)(H)\square I$ and it is thus the $\cat{V}$-enriched Bousfield localization $L_{\lder (\id\otimes S) H/\cat{V}} (\S^{\Xi^{\op}})$.
\end{proof}

\section{Applications}\label{sec:applications}

In this last section, we study some of the consequences of Theorem \ref{theo: main}. In the first subsection, we compare strict models for a fully faithful (\ref{defi: fully faithful}) finite connected limit sketch to homotopy models. Then we describe the consequences of this comparison to the the theory of operads, $n$-categories and $n$-fold categories. Finally in the last subsection, we explain how our work relates to the theory of monads with arities of \cite{bergermonads}.

\subsection{Models for a finite connected limit sketch}\label{sec:appconnectedsketch}

Let $(\cat{T},L)$ be a finite connected limit sketch and let $(\cat{A}\op,\theta(L))$ be the finite connected fully faithful sketch of Proposition \ref{prop:full subcategory sketch}. Note that if $(\cat{T},L)$ is already fully faithful, then $(\cat{A}\op,\theta(L))$ is isomorphic to $(\cat{T},L)$. Let $(S,N)$ be the nerve adjunction for $\cat{A}$, seen as a dense full subcategory of $\model{\cat{T},L}$. 

Let $\cat{S}$ be a model category satisfying the assumptions \ref{assumption S}. We have the category $\model{\cat{T},L}_\S$ of strict models for $(\cat{T},L)$ that are internal to $\S$ and the adjunction (\ref{eq:Ssketch}) decomposes as
\[
\xymatrix{\S^{\cat{T}} \ar@<3pt>[r]^-{\theta_!} & \ar@<3pt>[l]^-{\theta^*} \S^{\cat{A}\op} \ar@<3pt>[r]^-{S\otimes\id} & \ar@<3pt>[l]^-{N\otimes\id} \model{\cat{T},L}_{\S}.} 
\]
We then have the following proposition.

\begin{prop}
The projective model structure on $\S^{\cat{T}}$ can be transferred along the nerve functor 
\[i\otimes\id:\model{\cat{T},L}_\S\to\S^{\cat{T}}.\]
Moreover, if $(\cat{T},L)$ is a fully faithful sketch the functor $N\otimes\id$ becomes a Quillen equivalence.
\end{prop}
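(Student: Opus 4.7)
The plan is to reduce to the case of a fully faithful sketch via the construction of Proposition \ref{prop:full subcategory sketch}, and then appeal to Propositions \ref{prop: projective model structure} and \ref{prop: Quillen equivalence projective}.

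First I would observe that the equivalence $\theta^{*}\colon \model{\cat{A}\op,\theta(L)}\simeq \model{\cat{T},L}:\theta_{*}$ of Proposition \ref{prop:full subcategory sketch} induces an equivalence $\model{\cat{T}, L}_{\S} \simeq \model{\cat{A}\op, \theta(L)}_{\S}$ (the proof of Proposition \ref{prop:full subcategory sketch} works over $\S$ verbatim, or one may tensor the equivalence of locally presentable categories with $\S$). By Proposition \ref{prop: fully faithful limit sketch}, the pair $(\model{\cat{A}\op, \theta(L)}, \cat{A}\op)$ satisfies assumptions \ref{assumptions X}, so Proposition \ref{prop: projective model structure} yields a model structure on $\model{\cat{T}, L}_{\S}$ whose weak equivalences and fibrations are created by the nerve functor $N \otimes \id \colon \model{\cat{T}, L}_{\S} \to \S^{\cat{A}\op}$.

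Next I would show this model structure coincides with the transfer of the projective model structure on $\S^{\cat{T}}$ along $i \otimes \id$. Unpacking the definitions yields the identification $i \otimes \id = \theta^{*} \circ (N \otimes \id)$ as functors $\model{\cat{T},L}_{\S}\to \S^{\cat{T}}$; indeed, for a model $M$ and $t\in\cat{T}$ one has $(N\otimes\id)(M)(\theta(t)) = \model{\cat{T},L}(\theta(t),M)\cong \Set^{\cat{T}}(h_{t},i(M)) = (i\otimes\id)(M)(t)$, using the definition $\theta(t)=l(h_{t})$ and the adjunction $l\dashv i$. Since by construction $\cat{A}=l\circ y(\cat{T}\op)$, the functor $\theta\colon \cat{T}\to \cat{A}\op$ is essentially surjective on objects. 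Therefore a morphism in $\S^{\cat{A}\op}$ is an objectwise weak equivalence (respectively, fibration) if and only if its image under $\theta^{*}$ is such in $\S^{\cat{T}}$. Hence $i\otimes\id$ creates exactly the same weak equivalences and fibrations as $N\otimes\id$, proving the first assertion.

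Finally, when $(\cat{T}, L)$ is itself fully faithful, Proposition \ref{prop: fully faithful limit sketch} shows that $(\model{\cat{T}, L}, \cat{T}\op)$ already satisfies assumptions \ref{assumptions X}, and the nerve functor for this pair agrees with $i\otimes\id$: by the Yoneda lemma and fully-faithfulness, $N(M)(h_{t}) = \model{\cat{T}, L}(h_{t}, M) = M(t) = i(M)(t)$. Proposition \ref{prop: Quillen equivalence projective} then provides the desired Quillen equivalence between $\S^{\cat{T}}$ and $\model{\cat{T}, L}_{\S}$. The main obstacle in the argument is the bookkeeping for the identification $i\otimes\id=\theta^{*}\circ(N\otimes\id)$; once this is in place, the equivalence of the two transferred model structures, and the Quillen equivalence in the fully faithful case, both follow formally from results already established in the paper.
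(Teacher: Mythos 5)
Your proof is correct and follows essentially the same route as the paper: factor $i\otimes\id$ as $\theta^*\circ(N\otimes\id)$, use the already-established transfer and Quillen equivalence for the dense subcategory $\cat{A}$ of compact connected objects (Propositions \ref{prop: projective model structure} and \ref{prop: Quillen equivalence projective}), and observe that $\theta^*$ does not change the projective model structure because $\theta$ is surjective on objects. The paper states this more tersely but the decomposition and the key inputs are identical.
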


\begin{proof}
Since the functor $\theta \colon \cat{T}\to \cat{A}\op$ is a bijection on objects, the projective model structure on $\S^{\cat{T}}$ transfers along $\theta^*$ to the projective model structure on $\S^{\cat{A}\op}$. Since $\cat{A}$ is a dense full subcategory of $\model{\cat{T},L}$
the projective model structure transfers along $N\otimes \id$ by Theorem \ref{theo: main} and $N\otimes \id$ is a Quillen equivalence. It follows that the projective model structure on $\S^{\cat{T}}$ transfers along $l\otimes \id$; if $(\cat{T},L)$ is a fully faithful sketch $\cat{T}$ is isomorphic to $\cat{A}\op$, thus $l\otimes \id$ is a Quillen equivalence.
\end{proof}

Now, we want to localize the model structure on $\S^{\cat{T}}$ to obtain a new one in which the fibrant objects are a suitable notions of homotopy models for $(\cat{T},L)$. Roughly speaking a homotopy model for a limit sketch $(\cat{T},L)$ is a diagram $\cat{T}\to\S$ that sends the cones of $L$ to homotopy limit cones. For $\cat{M}$ a model category, we denote by $\R\Map_{\cat{M}}(X,Y)$ the derived mapping space from $X$ to $Y$. If $\cat{M}$ is a simplicial model category, this is just given by the ordinary mapping space from a cofibrant replacement of $X$ to a fibrant replacement of $Y$. In general, this can be constructed as the mapping space in the Hammock localization or via simplicial/cosimplicial framings.

\begin{lemm}\label{lemm: generators of a combinatorial model category}
Let $\cat{M}$ be a combinatorial model category. There exists a set $\mathscr{G}$ of objects of $\cat{M}$ such that a map $f:X\to Y$ is a weak equivalence of $\cat{M}$ if and only if for any $G$ in $\mathscr{G}$, the induced map
\[\R\Map_\cat{M}(G,X)\to \R\Map_{\cat{M}}(G,Y)\]
is a weak equivalence.
\end{lemm}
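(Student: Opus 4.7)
The plan is to reduce the problem to the well-understood case of a simplicial presheaf category via Dugger's presentation theorem, which asserts that any combinatorial model category $\cat{M}$ is Quillen equivalent to a left Bousfield localization $L\mathrm{sPre}(C)$ of the projective model structure on $\mathrm{sPre}(C) := \Fcat{C\op}{\sSet}$ for some small category $C$. I will fix such a presentation and write $F \colon L\mathrm{sPre}(C) \rightleftarrows \cat{M} \colon U$ for the Quillen equivalence. The proposed set is $\mathscr{G} := \{F(y(c))\}_{c\in C}$, where $y$ denotes the Yoneda embedding; each $F(y(c))$ is cofibrant in $\cat{M}$ because $y(c)$ is projectively cofibrant and $F$ is left Quillen.

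The ``only if'' direction will be immediate from homotopy invariance of $\R\Map_\cat{M}$, so the content is the converse. Suppose $f \colon X \to Y$ induces a weak equivalence $\R\Map_\cat{M}(G,X) \to \R\Map_\cat{M}(G,Y)$ for every $G \in \mathscr{G}$. Replacing $X, Y$ by fibrant replacements $X^{\mathrm{fib}}, Y^{\mathrm{fib}}$ changes neither the hypothesis nor the truth value of ``$f$ is a weak equivalence''; the objects $U(X^{\mathrm{fib}}), U(Y^{\mathrm{fib}})$ are then fibrant in $L\mathrm{sPre}(C)$, hence in $\mathrm{sPre}(C)_{\mathrm{proj}}$ as well. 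Using the standard compatibility of derived mapping spaces with Quillen adjunctions together with the tautology that $\R\Map_{\mathrm{sPre}(C)_{\mathrm{proj}}}(y(c),Z) \simeq Z(c)$ for any projectively fibrant $Z$, the hypothesis translates into the statement that $U(f^{\mathrm{fib}})$ is an objectwise, and therefore projective, weak equivalence in $\mathrm{sPre}(C)$. Such a map is in particular a weak equivalence in the localized structure $L\mathrm{sPre}(C)$, and since $(F,U)$ is a Quillen equivalence and $X^{\mathrm{fib}}, Y^{\mathrm{fib}}$ are both fibrant, the right adjoint $U$ reflects weak equivalences between them. This will force $f^{\mathrm{fib}}$, and therefore $f$, to be a weak equivalence in $\cat{M}$.

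The main technical input is Dugger's presentation theorem; given this, the argument is only a routine chase of the standard compatibilities of $\R\Map$ with Quillen adjunctions and with left Bousfield localizations, combined with the fact that in $\mathrm{sPre}(C)_{\mathrm{proj}}$, evaluation at $c$ coincides with the derived mapping space out of the representable $y(c)$. If one wishes to avoid Dugger, an alternative approach is to use J.\ Smith's accessibility of the class of weak equivalences in a combinatorial model category together with a direct argument based on a set of $\kappa$-presentable generators; this alternative is more elementary but requires more bookkeeping to produce a set of cofibrant detectors.
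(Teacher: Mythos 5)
Your proposal is correct and follows essentially the same route as the paper: both invoke Dugger's presentation theorem to realize $\cat{M}$ as the target of a Quillen equivalence from a left Bousfield localization of a projective simplicial presheaf category, take $\mathscr{G}$ to be the images of the representables under the left Quillen functor, and conclude via the compatibility of derived mapping spaces with the Quillen equivalence together with the fact that representables detect weak equivalences in the localized presheaf category. Your write-up merely makes explicit (fibrant replacement, reflection of weak equivalences between fibrant objects) what the paper leaves as a "routine chase."
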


\begin{proof} 
By a result of Dugger (\emph{cf.} \cite[Propositions 3.2,3.3]{dugger2001combinatorial}) there exists a small category $A$ and a Quillen equivalence
\[
 \xymatrix{
(\sSet)^{A\op}_S 
  \ar@<3pt>[r]^-{L} &
\cat{M} 
  \ar@<3pt>[l]^-{R}
},
\]
where $(\sSet)^{A\op}_S$ is a left Bousfield localization of the projective model structure on the category $(\sSet)^{A\op}_{proj}$ ($\sSet$ is taken with the Kan-Quillen model structure).  
Let $\mathscr{G}=\{L\circ y(a)\mid a\in A\}$ be the image of the representable in $A$ under $L$.
For every $X\in \cat{M}$ and $a\in A$ the derived mapping space $\R\Map_{\cat{M}}(L\circ y(a),X)$ is weakly equivalent to $\R\Map_{(\sSet)^{A\op}_S}(y(a),\R R(X))$.
The statement follows from the fact that in $(\sSet)^{A\op}_S$ a map $f\colon X' \to Y'$ is a weak equivalence if and only if the induced map
\[\R\Map_{(\sSet)^{A\op}_S}(y(a),X')\to \R\Map_{(\sSet)^{A\op}_S}(y(a),Y')\]
is a weak equivalence for every $a\in A$.
\end{proof}

Assume now that $\S$ still satisfies the assumptions \ref{assumption S} and is moreover left proper. Let $\mathscr{G}$ be a set of homotopy generators of $\S$ whose existence is given by Lemma \ref{lemm: generators of a combinatorial model category}. Without loss of generality, we may assume that the elements of $\mathscr{G}$ are cofibrant. Let $l:I^{\triangleleft}\to\cat{T}$ be an element of $L$. We can then consider the composite
\[(I\op)^{\triangleright}\xrightarrow{l\op}\cat{T}\op\xrightarrow{y}\cat{Set}^{\cat{T}},\]
where $y$ is the Yoneda embedding. This cocone induces a map
\[f_{l,G}:\hocolim_{i\in I} G\otimes y(l(i))\to G\otimes y(l(\ast)),\]
where $\ast$ denotes the cone point of $I^{\triangleleft}$ and the homotopy colimit is computed in the model category $\S^{\cat{T}}_{proj}$. Let $H_{L,\mathscr{G}}=\{f_{l,G}\mid l\in L, G\in \mathscr{G}\}$ be the set of all such maps. We define $\hmodel{\cat{T},L}_\S$ to be the left Bousfield localization of $\S^{\cat{T}}_{proj}$ with respect to the maps of $H_{L,\mathscr{G}}$. We have the following characterization of the fibrant objects of $\hmodel{\cat{T},L}_\S$.

\begin{prop}\label{prop: characterization of fibrant objects}
An object $X$ in $\S^{\cat{T}}$ is fibrant in  $\hmodel{\cat{T},L}_\S$ if and only if it is projectively fibrant and for each $l\in L$, the map
\[X(l(\ast))\to \holim_{i\in I}X(l(i))\]
is a weak equivalence.
\end{prop}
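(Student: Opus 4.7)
The plan is to unpack the definition of the Bousfield localization $\hmodel{\cat{T},L}_\S$ and reduce the fibrancy condition to a statement that can be checked termwise in $\S$ via the homotopy generators $\mathscr{G}$.

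First I would apply the general theory of left Bousfield localizations: an object $X\in \S^{\cat{T}}_{proj}$ is fibrant in $\hmodel{\cat{T},L}_\S$ if and only if $X$ is projectively fibrant and the map
\[ (f_{l,G})^*\colon \R\Map_{\S^{\cat{T}}}(G\otimes y(l(\ast)),X)\longrightarrow \R\Map_{\S^{\cat{T}}}(\hocolim_{i\in I} G\otimes y(l(i)),X) \]
is a weak equivalence in $\sSet$ for every $l\in L$ and every $G\in \mathscr{G}$. Since every $G\in \mathscr{G}$ can be assumed cofibrant and $y(t)$ is projectively cofibrant in $\Set^{\cat{T}}$, the object $G\otimes y(t)$ is cofibrant in $\S^{\cat{T}}_{proj}$.

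Next I would identify these mapping spaces concretely. By the enriched Yoneda lemma we have a natural isomorphism $\Map_{\S^{\cat{T}}}(G\otimes y(t),X)\cong \Map_\S(G,X(t))$, and since $X(t)$ is fibrant in $\S$ for every $t\in \cat{T}$, this induces a natural weak equivalence $\R\Map_{\S^{\cat{T}}}(G\otimes y(t),X)\simeq \R\Map_\S(G,X(t))$. Using that $\R\Map(-,X)$ turns homotopy colimits into homotopy limits, I would then identify
\[ \R\Map_{\S^{\cat{T}}}\bigl(\hocolim_{i\in I} G\otimes y(l(i)),X\bigr)\simeq \holim_{i\in I}\R\Map_\S(G,X(l(i)))\simeq \R\Map_\S\bigl(G,\holim_{i\in I} X(l(i))\bigr), \]
where the last equivalence uses that $\R\Map_\S(G,-)$ preserves homotopy limits on fibrant diagrams.

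Combining these identifications, the local condition for $X$ at $f_{l,G}$ becomes: the natural map $\R\Map_\S(G,X(l(\ast)))\to \R\Map_\S(G,\holim_{i\in I} X(l(i)))$ is a weak equivalence for every $G\in \mathscr{G}$. By Lemma \ref{lemm: generators of a combinatorial model category}, this is equivalent to the map $X(l(\ast))\to \holim_{i\in I} X(l(i))$ being a weak equivalence in $\S$, which yields the proposition.

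The main technical point is justifying the derived Yoneda-type identification $\R\Map_{\S^{\cat{T}}}(G\otimes y(t),X)\simeq \R\Map_\S(G,X(t))$ and the interchange of $\R\Map$ with $\hocolim$/$\holim$ in a manner that is valid for a general $\S$ satisfying the assumptions; this can be handled either by choosing simplicial framings when $\S$ is not simplicial, or by invoking Dugger's presentation of $\S$ as a left Bousfield localization of a simplicial presheaf category as in the proof of Lemma \ref{lemm: generators of a combinatorial model category}. Everything else is a formal consequence of the definition of the localization.
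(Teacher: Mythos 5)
Your proposal is correct and follows essentially the same route as the paper: reduce fibrancy in the localization to locality against the maps $f_{l,G}$, identify $\R\Map_{\S^{\cat{T}}}(G\otimes y(t),X)$ with $\R\Map_{\S}(G,X(t))$ via the tensor product formula, commute the mapping space past the homotopy (co)limit, and conclude with Lemma \ref{lemm: generators of a combinatorial model category}. The paper implements the derived mapping spaces concretely via a simplicial framing $X_\bullet$ of $X$, which is exactly the fallback you mention for handling the case where $\S$ is not simplicial.
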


\begin{proof}
Since the localization only depends on the homotopy type of the maps of $H_{L,\mathscr{G}}$, we may assume, without loss of generality, that the sources of all the maps of $H_{L,\mathscr{G}}$ are cofibrant (note that their targets are already cofibrant). Assume that $X$ is fibrant in  $\S^{\cat{T}}_{proj}$. Let $X_{\bullet}$ be a simplicial framing of $X$ in $\S^{\cat{T}}_{proj}$. Then, for any cofibrant object $A$ of $\S^{\cat{T}}_{proj}$, the simplicial set $\S^{\cat{T}}(A,X_{\bullet})$ is a model for the derived mapping space $\R\Map_{\S^{\cat{T}}}(A,X)$. Moreover, by definition of the projective model structure, for each $t$ in $\cat{T}$, the simplicial object $X_{\bullet}(t)$ is a simplicial framing of $X(t)$.

The object $X$ is fibrant in  $\hmodel{\cat{T},L}_\S$, if and only if for any $l\in L$ and $G\in \mathscr{G}$, the map
\[\S^{\cat{T}}(G\otimes y(l(\ast)),X_\bullet)\to \S^{\cat{T}}(\hocolim_{i\in I}G\otimes y(l(i)),X_\bullet)\]
is a weak equivalence. Equivalently, this happens if and only if the map
\[\S^{\cat{T}}(G\otimes y(l(\ast)),X_\bullet)\to \holim_{i\in I}\S^{\cat{T}}(G\otimes y(l(i)),X_\bullet)\]
is  a weak equivalence. Equivalently, by definition of the tensor product (see Equation \ref{c otimes d}), this happens if and only if we have a weak equivalence
\[\S(G,X_\bullet(l(\ast)))\to \holim_{i\in I}\S^{\cat{T}}(G,X_\bullet(l(i))).\]
Since this has to be true for any $G$, by Lemma \ref{lemm: generators of a combinatorial model category}, we see that $X$ is fibrant if and only if
\[X(l(\ast))\to \holim_{i\in I}X(l(i))\]
is a weak equivalence.
\end{proof}



Our main theorem takes the following form.

\begin{theo}
Let $\S$ be a left proper model category satisfying the assumptions \ref{assumption S}. Let $(\cat{T},L)$ be a finite connected limit sketch which is fully faithful in its category of models. Then there is a model structure on $\model{\cat{T},L}_\S$ in which a map is a fibration (resp. cofibration, weak equivalence) if and only if its image by $\id\otimes N$ is a fibration (resp. cofibration, weak equivalence) in $\hmodel{\cat{T},L}_\S$. The functor $\id\otimes N$ is then a right Quillen equivalence. Finally the model structure on $\model{\cat{T},L}_\S$ is left proper.
\end{theo}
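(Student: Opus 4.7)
The plan is to reduce this theorem to a direct application of Theorem \ref{theo: main}, using the dense subcategory produced by Proposition \ref{prop: fully faithful limit sketch} to identify the setup with the abstract one of that theorem.

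First, I set $\cat{X} := \model{\cat{T},L}$ and $\Xi := \cat{T}\op$, the latter being viewed as a full subcategory of $\cat{X}$ via the representable functors $\cat{T}(t,-)$, which are $\cat{X}$-models because $(\cat{T},L)$ is fully faithful. By Proposition \ref{prop: fully faithful limit sketch}, $\Xi$ is a dense subcategory of $\cat{X}$ whose objects are compact and connected; hence the pair $(\cat{X},\Xi)$ satisfies assumptions \ref{assumptions X}. Under these identifications, $\S\otimes\cat{X}$ is canonically equivalent to $\model{\cat{T},L}_\S$, the presheaf category $\S^{\Xi\op}$ is just $\S^{\cat{T}}$, and the abstract nerve $\id\otimes N$ becomes the forgetful functor $i\otimes\id \colon \model{\cat{T},L}_\S \to \S^{\cat{T}}$.

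Second, I verify that the hypothesis of Theorem \ref{theo: main} on the model structure upstairs is satisfied. By construction, $\hmodel{\cat{T},L}_\S$ is the left Bousfield localization of the projective model structure $\S^{\cat{T}}_{proj} = \S^{\Xi\op}_{proj}$ at the set $H_{L,\mathscr{G}}$ defined before Proposition \ref{prop: characterization of fibrant objects}, so it is a left Bousfield localization at a set of maps, as required.

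Third, I apply Theorem \ref{theo: main} to the pair $(\cat{X},\Xi)$ together with this localized model structure on $\S^{\Xi\op}$. Since $\S$ is left proper, the theorem produces a model structure on $\S\otimes\cat{X} \simeq \model{\cat{T},L}_\S$ whose fibrations, cofibrations and weak equivalences are created by $\id\otimes N$, makes $\id\otimes N$ a right Quillen equivalence to $\hmodel{\cat{T},L}_\S$, and is left proper; this is exactly the desired conclusion. There is no real obstacle: the bulk of the work, including the transfer of the projective model structure, the equivalence $\model{\cat{T},L}\otimes\S \simeq \model{\cat{T},L}_\S$, and the preservation of cofibrant-generation under Bousfield localization, is already encoded in Theorem \ref{theo: main} and Proposition \ref{prop: fully faithful limit sketch}; the present statement is essentially a repackaging of those results in the language of limit sketches.
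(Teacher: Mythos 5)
Your proposal is correct and takes the same route as the paper, whose entire proof is ``Follows immediately from Theorem \ref{theo: main}''; you have simply made explicit the identifications (via Proposition \ref{prop: fully faithful limit sketch}) that the authors leave implicit. No gaps.
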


\begin{proof}
Follows immediately from Theorem \ref{theo: main}.
\end{proof}

\subsection{Operads}

We denote by $\cat{Op}$ the category of small multicategories in sets. As explained in \cite{moerdijkdendroidal}, there exists a category $\Omega$ of trees equipped with a fully faithful functor $\Omega\to\cat{Op}$ inducing a nerve functor $N_d:\cat{Op}\to \Set^{\Omega\op}$. It is straightforward to see that the nerve $N_d$ preserves filtered colimits and coproducts. Thus, the pair $(\cat{Op},\Omega)$ satisfies the hypothesis of \ref{assumptions X}. 

We can form the projective model structure on $\S^{\Omega\op}$. Let $\mathscr{G}$ be a set of generators as in \ref{lemm: generators of a combinatorial model category}. Assuming that $\S$ is left proper we can perform the $\S$-enriched left Bousfield localization of $\S^{\Omega\op}$ with respect to the maps
\begin{equation}\label{eq:segaloper}
 G\otimes\mathrm{Sc}(T)\to G\otimes \Omega[T]
\end{equation}
for any tree $T$ in $\Omega$ and any $G\in\mathscr{G}$. We denote this model structure by $\S^{\Omega\op}_{Segal}$. We prove exactly as in Proposition \ref{prop: characterization of fibrant objects} that its fibrant objects are the functors $\Omega\op\to\S$ satisfying the Segal condition. We immediately deduce the following theorem from Theorem \ref{theo: main}.

\begin{theo}\label{theo:main operads}
The projective and the Segal model structure on $\S^{\Omega\op}$ can be transferred to a model structure on $\S\otimes\cat{Op}$ along the nerve functor $\id\otimes N_d$. Moreover, in both cases, the functor $\id\otimes N_d$ is a right Quillen equivalence and preserves and reflects cofibrations.
\end{theo}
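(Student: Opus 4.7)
The plan is to realize this as a direct application of Theorem \ref{theo: main} to the pair $(\cat{Op},\Omega)$. The first step is to verify that this pair satisfies Assumptions \ref{assumptions X}. Density of $\Omega$ in $\cat{Op}$ is equivalent to the full faithfulness of the dendroidal nerve $N_d \colon \cat{Op} \to \Set^{\Omega\op}$, which is a standard fact from \cite{moerdijkdendroidal}. Preservation of filtered colimits and of finite coproducts by $N_d$ amounts to the statement that, for each tree $T \in \Omega$, the functor $\cat{Op}(T,-)$ commutes with such colimits; this is straightforward since a tree has only finitely many vertices and edges, and such colimits in $\cat{Op}$ are computed in a sufficiently concrete way. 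These facts are already stated in the paragraph immediately preceding the theorem.

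For the projective part of the statement, Theorem \ref{theo: main} applies at once to produce the transferred projective model structure on $\S \otimes \cat{Op}$ together with the Quillen equivalence $\id \otimes S_d \dashv \id \otimes N_d$. The characterization of cofibrations as maps whose image under $\id \otimes N_d$ is a projective cofibration, and in particular the claim that $\id \otimes N_d$ preserves and reflects cofibrations, is then Proposition \ref{prop:characterization of cofibrations}.

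For the Segal part, the only additional observation needed is that $\S^{\Omega\op}_{Segal}$, although defined as an $\S$-enriched Bousfield localization, is in fact an ordinary left Bousfield localization of the projective model structure at a set of maps. Indeed, as recalled in the proof of Proposition \ref{prop:enrichedmain}, the enriched localization at the Segal maps $\mathrm{Sc}(T) \to \Omega[T]$ coincides with the ordinary Bousfield localization at the set of pushout-products $\{\mathrm{Sc}(T) \to \Omega[T]\} \,\square\, I$, where $I$ is a set of generating cofibrations of $\S$ (and the leading $G \otimes (-)$ in \eqref{eq:segaloper} ranges over the generators produced by Lemma \ref{lemm: generators of a combinatorial model category}). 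Hence the hypothesis of Theorem \ref{theo: main} that we be localizing at a set of maps is satisfied, and the theorem yields both the transferred model structure on $\S \otimes \cat{Op}$ and the Quillen equivalence; preservation and reflection of cofibrations again follows from Proposition \ref{prop:characterization of cofibrations}, since cofibrations are not altered by left Bousfield localization.

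I do not foresee any substantive obstacle: the whole content of the theorem has been absorbed into Theorem \ref{theo: main}, and the only thing to check is that $(\cat{Op},\Omega)$ fits the axiomatic framework, which is standard. The most delicate point, if any, is simply bookkeeping to confirm that the Segal localization is indexed by a set rather than a proper class, which is immediate since $\Omega$ is essentially small and $\mathscr{G}$ is a set.
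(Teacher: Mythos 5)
Your proposal is correct and follows the same route as the paper: the paper likewise observes that $(\cat{Op},\Omega)$ satisfies Assumptions \ref{assumptions X} (via the dendroidal nerve preserving filtered colimits and coproducts) and then deduces the theorem immediately from Theorem \ref{theo: main}, with the enriched-versus-ordinary localization point handled exactly as in Proposition \ref{prop:enrichedmain}. Your extra care in spelling out that the Segal localization is at a set of maps is implicit in the paper but not a different argument.
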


Note that if $\S$ is the category $\sSet$ with its usual model structure, the model category $\S^{\Omega\op}_{Segal}$ is Quillen equivalent to the model structure of dendroidal Segal spaces constructed in \cite[Definition 5.4.]{cisinskidendroidal}. In fact, one can show using \cite[Proposition 1.7.]{horelmodel} that the two model structures have the same weak equivalences. In that case, we can construct a further localization $\S^{\Omega\op}_{Rezk}$ of $\S^{\Omega\op}$ in which the fibrant objects are the fibrant in $\S^{\Omega\op}_{Segal}$ that are moreover complete. Theorem \ref{theo: main} allows us to transfer this model structure to a Quillen equivalent model structure on $\S\otimes\cat{Op}$ denoted $\S\otimes\cat{Op}_{Rezk}$. Moreover the model structure $\S^{\Omega\op}_{Rezk}$ is itself Quillen equivalent to the model structure constructed in \cite[Definition 6.2.]{cisinskidendroidal}. In particular, the $\infty$-category underlying $\S\otimes\cat{Op}_{Rezk}$ is a model for the $\infty$-category of $\infty$-operads.

\subsection{\texorpdfstring{$n$}{n}-categories}\label{sec:ncategories}

The category of $n$-categories denoted $n\Cat$ is defined inductively. The induction is started by defining $1\Cat=\Cat$. Then, the category $n\Cat$ is the category of categories enriched in $(n-1)\Cat$. There is a full subcategory $\Theta_n$ (\emph{cf.} \cite[Corollary 3.10]{rezkcartesian}) of $n\Cat$ spanned by compact connected objects. It follows that the pair $(n\Cat,\Theta_n)$ satisfies the assumptions of \ref{assumptions X}. Moreover, Rezk constructs inductively a set $\mathscr{T}_{n}^{Se}$ of maps in $\Set^{\Theta_n\op}$ such that the $\Theta_n$-spaces that are local with respect to these maps satisfy the suitable form of Segal condition that makes them models for weak $n$-categories. The construction of this set is inductive and rather involved. We refer the reader to Section 5 of \cite{rezkcartesian}.

For $\S$ a model category satisfying the assumptions \ref{assumption S} and left proper with set of generators $\mathscr{G}$ (see Lemma \ref{lemm: generators of a combinatorial model category}) we can form the left Bousfield localization of $\S^{\Theta_n\op}$ with respect to the set $\bigsqcup_{G\in\mathscr{G}}G\otimes\mathscr{T}_{n}^{Se}$. We denote this model structure by $\S^{\Theta_n\op}_{Segal}$ and call it the Segal model structure. Applying Proposition \ref{prop: characterization of fibrant objects}, we immediately see that the fibrant objects in this model structure are the functors $\Theta_n\op\to \S$ satisfying the Segal condition.

Theorem \ref{theo: main} immediately yields the following.

\begin{theo}\label{theo:main n-cat}
The projective and the Segal model structure on $\S^{\Theta_n\op}$ can be transferred to a model structure on $\S\otimes n\Cat$ along the nerve functor $\id\otimes N$. Moreover, in both cases, the functor $\id\otimes N$ is a right Quillen equivalence and preserves and reflects cofibrations.
\end{theo}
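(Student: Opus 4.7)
The proof is essentially an application of the main theorem (Theorem \ref{theo: main}) to the specific pair $(n\Cat,\Theta_n)$, so the work lies in verifying that the hypotheses are in place and that the Segal model structure on $\S^{\Theta_n\op}$ really is a left Bousfield localization of the projective one.

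First I would check that the pair $(n\Cat,\Theta_n)$ satisfies Assumptions \ref{assumptions X}. The density of $\Theta_n$ in $n\Cat$ is a standard fact from Rezk's work (see \cite[Proposition 3.10]{rezkcartesian}); this gives the first bullet. For the remaining two bullets, one uses that every object of $\Theta_n$ is compact and connected in $n\Cat$, so that $n\Cat(\theta,-)$ preserves filtered colimits and finite coproducts. This verification is inductive on $n$: a cell of $\Theta_n$ is built from cells of $\Theta_{n-1}$ via pasting along suspensions, and compactness/connectedness is preserved under these operations. Once this is done, all the general machinery of the previous sections applies verbatim.

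Next, for the projective case, I simply invoke Proposition \ref{prop: projective model structure} to transfer the projective model structure on $\S^{\Theta_n\op}$ to $\S\otimes n\Cat$ along $\id\otimes N$, together with Proposition \ref{prop: Quillen equivalence projective} to conclude that the adjunction is a Quillen equivalence, and Proposition \ref{prop:characterization of cofibrations} to conclude that the nerve functor preserves and reflects cofibrations. For the Segal case, one needs to observe that $\S^{\Theta_n\op}_{Segal}$ is by definition the left Bousfield localization of the projective model structure $\S^{\Theta_n\op}_{proj}$ at the set $\bigsqcup_{G\in\mathscr{G}} G\otimes\mathscr{T}_n^{Se}$; this localization exists because $\S^{\Theta_n\op}_{proj}$ is combinatorial and left proper (which requires $\S$ to be left proper, an hypothesis tacitly in force here). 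Then Theorem \ref{theo: main} applies directly and yields both the transferred model structure on $\S\otimes n\Cat$ and the fact that $\id\otimes N$ is a right Quillen equivalence; the preservation and reflection of cofibrations again comes from Proposition \ref{prop:characterization of cofibrations}, since cofibrations are unchanged under left Bousfield localization.

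The only genuinely subtle point is the verification that objects of $\Theta_n$ are connected in $n\Cat$, i.e. that $n\Cat(\theta,-)$ preserves finite coproducts; everything else is either a direct citation of earlier results or a straightforward unwinding of definitions. I expect this connectedness to be where any hidden difficulty resides, since it ultimately relies on the fact that cells in $\Theta_n$ are ``indecomposable'' under the inductive construction of enriched category coproducts (where the coproduct of two enriched categories is the disjoint union of object sets with the evident enrichment, so a morphism from a connected cell factors through exactly one summand).
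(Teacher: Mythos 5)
Your proposal matches the paper's argument: the paper likewise cites Rezk (\cite[Corollary 3.10]{rezkcartesian}) for $\Theta_n$ being a dense full subcategory of $n\Cat$ spanned by compact connected objects (hence Assumptions \ref{assumptions X} hold), defines $\S^{\Theta_n\op}_{Segal}$ as the left Bousfield localization of the projective model structure at $\bigsqcup_{G\in\mathscr{G}}G\otimes\mathscr{T}_n^{Se}$, and then concludes by a direct application of Theorem \ref{theo: main} (with Propositions \ref{prop: projective model structure}, \ref{prop: Quillen equivalence projective} and \ref{prop:characterization of cofibrations} covering the projective case and the cofibration statements). Your extra remarks on verifying connectedness inductively only flesh out the citation to Rezk and do not change the route.
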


If $\S$ is the model category of simplicial sets with the Kan-Quillen model structure, then we can construct a further localization of $\S^{\Theta_n\op}$ in which the fibrant objects are fibrant in $\S^{\Theta_n\op}_{Segal}$ and if moreover they are complete. The set of maps with respect to which we need to localize is the set $\mathscr{T}_{n,\infty}$ defined in \cite[11.4.]{rezkmodel}. The resulting model structure denoted $\S^{\Theta_n\op}_{Rezk}$ is Quillen equivalent to the model structure of $(\infty,n)$-$\Theta$-spaces defined in \cite[11.5.]{rezkmodel}. Theorem \ref{theo: main} allows us to transfer this model structure to a Quillen equivalent model structure on $\S\otimes n\Cat$ that we denote $\S\otimes n\Cat_{Rezk}$. In particular, the $\infty$-category underlying $\S\otimes n\Cat_{Rezk}$ is a model for the $\infty$-category of $(\infty,n)$-categories in the sense that it satisfies the axiomatic description of \cite{barwickunicity}.

\subsection{\texorpdfstring{$n$}{n}-fold categories}

The category of $n$-fold category is the category $\Cat^{\otimes n}$ where $\Cat$ denotes the locally presentable category of small categories. The category $\Cat$ can be expressed as a reflective subcategory of $\Set^{\Delta\op}$ via the usual nerve functor $\Cat\to\Set^{\Delta\op}$. It follows that $\Cat^{\otimes n}$ is a reflective subcategory of $\Set^{(\Delta^n)\op}$, moreover, one easily verifies that the image of $\Delta^n$ in $\Cat^{\otimes n}$ consists of compact and connected objects. It follows that the pair $(\Cat^{\otimes n},\Delta^n)$ satisfies the assumptions of \ref{assumptions X}.

We denote by $\Delta[k_1,k_2,\ldots,k_n]$ the objects of $\Set^{(\Delta^n)\op}$ represented by $[k_1]\times\ldots\times[k_n]$. Note that the object $\Delta[k_1,\ldots,k_n]$ is isomorphic to $\Delta[k_1]\otimes\ldots\otimes\Delta[k_n]$ modulo the equivalence of categories $\Set^{(\Delta^n)\op}\simeq(\Set^{\Delta\op})^{\otimes n}$. 

We denote by $\mathscr{S}$ the set of arrows in $\Set^{\Delta\op}$
\[f(n):G(n)\to\Delta[n],\]
where $G(n)$ is the object of $\Set^{\Delta\op}$ representing the $n$-fold fiber product $X\mapsto X_1\times_{X_0}X_1\times_{X_0}\ldots\times_{X_0}X_1$ and $f(n)$ represents the Segal map
\[X_n\to  X_1\times_{X_0}X_1\times_{X_0}\ldots\times_{X_0}X_1.\]
A simplicial set is local with respect to the maps of $\mathscr{S}$ if and only if it is in the essential image of the nerve functor $\Cat\to\Set^{\Delta\op}$. 

We denote by $\mathscr{S}^{\otimes n}$ the smallest set of arrows of $\Set^{(\Delta^n)\op}$ containing the arrows $f(k_1)\otimes\id_{\Delta[k_2]}\otimes\ldots \otimes\id_{\Delta[k_n]}$ for all $n$-tuple $(k_1,\ldots,k_n)$ and which is invariant under the action of $\Sigma_n$ on $\Set^{(\Delta^n)\op}$. 

As in the previous paragraph, we assume that $\S$ is left proper and we denote by $\mathscr{G}$ a set of homotopy generators (see Lemma \ref{lemm: generators of a combinatorial model category}). We denote by $\S^{(\Delta^n)\op}_{Segal}$ the left Bousfield localization of the projective model structure on $\S^{(\Delta^n)\op}$ with respect to the maps in $\bigsqcup_{G\in\mathscr{G}}G\otimes\mathscr{S}^{\otimes n}$. Using Propostion \ref{prop: characterization of fibrant objects}, we see that an object is fibrant in $\S^{(\Delta^n)\op}_{Segal}$ if it is projectively fibrant and if the simplicial object of $\S$ obtained by fixing all the variables but one is fibrant in $\S^{\Delta\op}_{Segal}$. Theorem \ref{theo: main} then implies the following theorem.

\begin{theo}\label{theo: main n-fold}
The projective and the Segal model structure on $\S^{(\Delta^n)\op}$ can be transferred to a model structure on $\S\otimes\cat{Cat}^{\otimes n}$ along the nerve functor $\id\otimes N$.Moreover, in both cases, the functor $\id\otimes N_d$ is a right Quillen equivalence and preserves and reflects cofibrations.
\end{theo}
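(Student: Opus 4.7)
The plan is to invoke Theorem \ref{theo: main} directly, since the paragraph immediately preceding the statement has already verified the required input data. Specifically, the authors point out that $\Cat^{\otimes n}$ is a locally presentable reflective subcategory of $\Set^{(\Delta^n)\op}$, and that the image of $\Delta^n$ in $\Cat^{\otimes n}$ consists of compact and connected objects. Together with density of $\Delta^n$ (which is inherited from the density of $\Delta$ in $\Cat$ via iteration, or equivalently from the fact that the iterated nerve is fully faithful), this shows that the pair $(\Cat^{\otimes n},\Delta^n)$ satisfies Assumptions \ref{assumptions X}. I would therefore begin the proof by stating this verification explicitly.

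Next, for the projective model structure case, I would cite Proposition \ref{prop: projective model structure} to obtain the transferred model structure on $\S\otimes\Cat^{\otimes n}$, Proposition \ref{prop: Quillen equivalence projective} to conclude that the nerve adjunction is a Quillen equivalence, and Proposition \ref{prop:characterization of cofibrations} to conclude that the nerve preserves and reflects cofibrations. This handles the first half of the statement with no further work.

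For the Segal model structure case, the key observation is that $\S^{(\Delta^n)\op}_{Segal}$ is by construction a left Bousfield localization of the projective model structure with respect to a set of maps, namely $\bigsqcup_{G\in\mathscr{G}}G\otimes\mathscr{S}^{\otimes n}$. Since $\S$ is assumed left proper, the projective model structure on $\S^{(\Delta^n)\op}$ is left proper, so this localization exists. Theorem \ref{theo: main} then applies verbatim: it produces the transferred model structure on $\S\otimes\Cat^{\otimes n}$, makes $\id\otimes N$ a right Quillen equivalence, and ensures that cofibrations are characterized by being projective cofibrations after applying $\id\otimes N$ (since cofibrations are unchanged by left Bousfield localization). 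This gives the preservation and reflection of cofibrations in the Segal case as well.

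I do not anticipate a real obstacle here, since the theorem is essentially a corollary of the main theorem applied to a specific example. The only step that deserves a moment of care is confirming the three bullet points of Assumptions \ref{assumptions X} for $(\Cat^{\otimes n},\Delta^n)$; the density and compactness statements can be proved by induction on $n$, using the fact that $\Cat^{\otimes n}$ is the category of categories enriched in $\Cat^{\otimes (n-1)}$, or equivalently by viewing the nerve as an $n$-fold iteration of the classical nerve $\Cat\to\Set^{\Delta\op}$, which preserves filtered colimits and finite coproducts and is fully faithful.
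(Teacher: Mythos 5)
Your proposal is correct and follows essentially the same route as the paper, which simply verifies Assumptions \ref{assumptions X} for the pair $(\Cat^{\otimes n},\Delta^n)$ in the preceding paragraph and then invokes Theorem \ref{theo: main} (with the projective case covered by Propositions \ref{prop: projective model structure}, \ref{prop: Quillen equivalence projective} and \ref{prop:characterization of cofibrations}). Your extra care about checking density and compactness by induction on $n$ is a reasonable elaboration of what the paper dismisses as an easy verification.
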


\begin{example}
The authors of \cite{calaquenote} construct a $\Delta^n$-space encoding the cobordism $n$-fold $\infty$-category. Using our main theorem \ref{theo: main n-fold}, we can rigidify this model to an equivalent strict $n$-fold category in the category of simplicial sets with the same homotopy type. An explicit construction of a strict $n$-fold category of cobordisms is also done in \cite{bokstedtcobordism}.
\end{example}

\subsection{Monads with (connected compact) arities}

Given a monad $T$ on a category $\cat{B}$ we denote its category of algebras by $\Alm{T}$. It is known that if $\cat{B}$ is finitely locally presentable and $T$ is finitary, the category $\Alm{T}$ is also finitely locally presentable and thus presented by a finite limit sketch. However, in applications, one would like to have an explicit description of a sketch describing the theory considered. 

When $\cat{B}$ is a presheaf category, the theory of monads with arities introduced by Berger, Melli\`es and Weber (\emph{cf.} \cite{weber2007familial,bergermonads}) provides a way to recover a fully faithful sketch describing $\Alm{T}$ from the data of $T$ and a sufficiently nice dense subcategory $\Theta_0$ of $\cat{B}$. The aim of this section is to recall this theory and to show how our theorem applies in this situation.

We fix a small category $C$ and a monad $T$ over $\Set^{C\op}$. Given a dense full subcategory $\Theta_0$ of $\Set^{C\op}$ and an object $A\in \Set^{C\op}$, the $\Theta_0$-cocone over $A$ is just the canonical cocone $(\Theta_0\downarrow A)^{\triangleright} \to \Set^{C\op}$ with tip $A$.
The monad $T$ is \textbf{with arities $\Theta_0$} if the composite $N_{\Theta_0} T$ sends $\Theta_0$-cocones in $\Set^{C\op}$ to colimits-cocones in $\Set^{\Theta_0\op}$ (see  \cite[Definition 1.8]{bergermonads}). 

Let $\Theta_0$ be a dense full subcategory of $\Set^{C\op}$ that provides arities for $T$ and denote the associated nerve functor by $N_{\Theta_0}\colon \Set^{C\op}\hspace{-3pt} \to \Set^{\Theta_0\op}$.
If we denote by $\Theta_T$ the full image of $\Theta_0$ in $\Alm{T}$, with nerve functor $N_{\Theta_T}$, we have the following  diagram:
\begin{equation}\label{eq:aritiesdiag1}
 \xymatrix{
\Theta_T 
  \ar[r]^-{} &
\Alm{T} 
  \ar@<3pt>[d]^-{U} 
  \ar[r]^-{N_{\Theta_T}} &
\Set^{\Theta_T\op}
  \ar@<3pt>[d]^-{j^*}
\\
\Theta_0 
  \ar[u]^-{j}
  \ar[r]^-{} &
\Set^{C\op} 
  \ar@<3pt>[u]^-{F}
  \ar[r]^-{N_{\Theta_0}} &
\Set^{\Theta_0\op}.
  \ar@<3pt>[u]^-{j_!}}
\end{equation}
Berger, Melli\`es and Weber showed that if $T$ is with arities $\Theta_0$ then $\Theta_T$ is dense in $\Alm{T}$ and the essential image of $N_{\Theta_T}$ is spanned by the presheaves $F$ such that $j^*(F)$ is in the essential image of $N_{\Theta_0}$ (Theorem 1.10 \cite{bergermonads}, Theorem 4.10 \cite{weber2007familial}). 

Since we are interested in finite connected sketches we restrict our attention to a more specific situation:

\begin{assu}\label{ass:monadwithar}
Let $C$, $\Theta_0$ and $T$ as above, we make the following assumptions:
\begin{enumerate}
 \item $T$ is a monad with arities $\Theta_0$,
 \item $\Theta_0$ contains $C$,
 \item for every $a\in \Theta_0$ the comma category $C\downarrow a$ is finite and connected. This implies in particular that all the objects of $\Theta_0$ are compact and connected in $\Set^{C\op}$.
\end{enumerate}
\end{assu}
For every $a\in \Theta_0$ let $l_a\colon (C\downarrow a)^{\triangleright} \to \Theta_0$ be the canonical $C$-cocone over $a$ and let
\[L=\{l_a\op \colon (a \downarrow C\op)^{\triangleleft} \to \Theta_0\op\mid a\in \Theta_0\}\] the set of all the opposite cones. 

The maps in $\Set^{\Theta_0\op}$ of the kind
\[
 a\longrightarrow \underset{c\in C\downarrow a} \colim h_c
\]
for some $a\in \Theta_0$ are called the \textbf{Segal maps}.

The essential image of $N_{\Theta_0}$ is spanned by those $F\in\Set^{\Theta_0\op}$ such that the canonical map induced by $l_a$:
\[
 F(a)\cong \Set^{\Theta_0\op}(a,F)\to \Set^{\Theta_0\op}(\underset{c\in C\downarrow a} \colim \,c,F)\cong  \underset{c\in C\downarrow a}\lim F(c)
\]
is an isomorphism for every $a\in \Theta_0$ (\emph{cf.} Proposition 4.13 \cite{weber2007familial}); 
in other words $\Set^{C\op}$ is equivalent to the category of models of the finite connected limit sketch $(\Theta_0\op,L)$.  

Therefore diagram (\ref{eq:aritiesdiag1}) above is equivalent to
\begin{equation}\label{eq:aritiesdiag2}
 \xymatrix{
\Theta_T 
  \ar[r]^-{} &
\model{\Theta_T\op,F(L)}_{\Set} 
  \ar@<3pt>[d]^-{U} 
  \ar@<3pt>[r]^-{N_T} &
\Set^{\Theta_T\op}
  \ar@<-3pt>[d]_-{j^*}
  \ar@<3pt>[l]^-{S_T}
\\
\Theta_0 
  \ar[u]^-{j}
  \ar[r]^-{} &
\model{\Theta_0\op,L}_{\Set} 
  \ar@<3pt>[u]^-{F}
  \ar@<3pt>[r]^-{N_{\Theta_0}} &
\Set^{\Theta_0\op}
   \ar@<3pt>[l]^-{S_T}
   \ar@<-3pt>[u]_-{j_!}}.
\end{equation}

In particular the category $\Alm{T}$ can be described as the category of models for the fully faithful finite connected limit sketch $(\Theta,j(L))$. 

If we let $\S$ to be as in Section \ref{sec:appconnectedsketch}, tensoring with $\S$ we get the following diagram of Quillen adjunctions
\begin{equation}
 \xymatrix{
\model{\Theta_T\op,F(L)}_{\S} 
  \ar@<3pt>[d]^-{U} 
  \ar@<3pt>[r]^-{N_T} &
\S^{\Theta_T\op}_{proj}
  \ar@<-3pt>[d]_-{j^*}
  \ar@<3pt>[l]^-{S_T}
\\
\S^{C\op}\cong\model{\Theta_0\op,L}_{\S} 
  \ar@<3pt>[u]^-{F}
  \ar@<3pt>[r]^-{N_{\Theta_0}} &
\S^{\Theta_0\op}_{proj}
  \ar@<-3pt>[u]_-{j_!}
  \ar@<3pt>[l]^-{S_{\Theta_0}}}
\end{equation}
where the horizontal adjunctions are Quillen equivalences. Localizing at the Segal maps as in Section \ref{sec:appconnectedsketch} on both rows, the diagram remains a diagram of Quillen adjunctions
\begin{equation}
 \xymatrix{
(\model{\Theta_T\op,j(L)}_{\S})\segal 
  \ar@<3pt>[d]^-{U} 
  \ar@<3pt>[r]^-{N_T} &
\hmodel{\Theta_T\op,j(L)}_{\S}
  \ar@<-3pt>[d]_-{j^*}
  \ar@<3pt>[l]^-{S_T}
\\
(\model{\Theta_0\op,L}_{\S})\segal 
  \ar@<3pt>[u]^-{F}
  \ar@<3pt>[r]^-{N_{\Theta_0}} &
\hmodel{\Theta_0\op,L}_{\S}
  \ar@<-3pt>[u]_-{j_!}
  \ar@<3pt>[l]^-{S_{\Theta_0}}}
\end{equation}
where the rows are Quillen equivalences.

All the examples presented in the previous sections can all be regarded as instances of the monad with arities, as explained in \cite{bergermonads}. For example $\Cat$ is the category of algebras for a monad $T$ over the category of graphs $\Set^{\mathbbm{2}\op}$, where $\mathbbm{2}$ is the category with set of objects $\{0,1\}$ and with only two arrows different from the identities, both from $0$ to $1$.

Let $\Theta_0$ full subcategory  of $\Set^{\mathbbm{2}\op}$ spanned by the finite linear graphs, i.e. those $G\in\Set^{\mathbbm{2}\op}$ such that $G(0)$ and $G(1)$ are finite, $|G(0)|=|G(1)|+1$, $G(s)$ and $G(t)$ are injective and with different images. $\Theta_0$  satisfies assumption \ref{ass:monadwithar} and is equivalent to $\Delta_{out}$, the wide subcategory of $\Delta$ spanned by all the maps $f$ between finite ordinals such that $f(i-1)=f(i)-1$ for every $i>0$ in the domain of $f$.
The full image of $\Theta_0$ in $\Cat$ is equivalent to $\Delta$ and the Segal maps correspond to the classical Segal maps.

Similarly, the dendroidal category $\Omega$ can be obtained from the monad over the category of \textbf{symmetric multi-graphs} which has $\cat{Op}$ as category of algebras and has the category of trees (and tree embeddings) as arities (\emph{cf.} \cite{weber2007familial} and \cite{kocktrees}). The Segal maps correspond exactly to the maps of type (\ref{eq:segaloper}).

The category of $n$-categories of section \ref{sec:ncategories} is the category of algebras for a monad $T$ over ($n$-truncated) globular sets; the category of $n$-globular pasting diagrams provides arities for $T$, satisfies assumption \ref{ass:monadwithar} and its full image in $n\cat{Cat}$ is isomorphic to $\Theta_n$.

The category $\omega \cat{Cat}$ of strict infinity categories is also the category of algebras for a monad $T$ over (non-truncated) globular sets. The category of globular pasting diagrams satisfies assumption \ref{ass:monadwithar} (with respect to $T$). We refer the reader to \cite[2.11,3.12]{bergermonads},\cite{bergeriterated} and \cite[8.1]{leinster2004higher} for details. 

Other examples of monads with arities are given in \cite{bergermonads} and \cite{weber2007familial}. One example of monad with arities that satisfies assumption \ref{ass:monadwithar} is the monad for properads over the presheaf category of \textbf{digraphical species} presented in \cite{kockhypergraph}, which has the category of connected acyclic graph and \'etale maps (presented in \emph{loc. cit.}) as arities.


\bibliographystyle{gtart}
\bibliography{biblio}

\end{document}